\documentclass[11pt]{amsart}
\usepackage[utf8]{inputenc}

\usepackage{amsmath,amssymb,amsmath}
\usepackage{mathabx}
\usepackage{graphicx}
\usepackage{subfigure}
\usepackage{cite}
\usepackage{hyperref}
\usepackage{url}
\usepackage{mathabx}
\usepackage{enumitem}
\usepackage{mathrsfs}  

\numberwithin{equation}{section}

\newcommand{\abs}[1]{\left\vert#1\right\vert}
\newcommand{\norm}[1]{\left\Vert#1\right\Vert}
\newcommand{\paren}[1]{\left(#1\right)}
\newcommand{\bracket}[1]{\left[#1\right]}
\newcommand{\set}[1]{\left\{#1\right\}}

\newcommand{\inner}[2]{\left\langle #1,#2\right\rangle}
\newcommand{\trinorm}[1]{\left\vvvert#1\right\vvvert}

\newcommand{\E}{\mathbb{E}}
\newcommand{\R}{\mathbb{R}}

 \newcommand{\N}{\mathbb{N}}
\newcommand{\Dt}{\Delta t}

\newcommand{\eps}{\epsilon}

\newcommand{\bP}{\mathbf{P}}
\newcommand{\bI}{\mathbf{I}}

\newcommand{\nd}{ [n]^d}

\newcommand{\1}{{\mathbf 1}}

\newcommand{\bi}{{\bar{i}}}
\newcommand{\bj}{{\bar{j}}}

\newcommand{\bQ}{\mathbf{Q}}
\newcommand{\bK}{\mathbf{K}}
\newcommand{\bN}{\mathbf{N}}
\newcommand{\bJ}{\mathbf{J}}

\newcommand{\cH}{\mathcal{H}}
\newcommand{\cF}{\mathcal{F}}

\newcommand{\HpT} {\mathscr{H}_T^p}
\newcommand{\scH} {\mathscr{H}}

\renewcommand{\P}{\mathbb{P}}

\DeclareMathOperator{\bigo}{O}
\DeclareMathOperator{\littleo}{o}

\DeclareMathOperator{\Tr}{Tr}

\DeclareMathOperator{\Lip}{Lip}

\DeclareMathOperator*{\esssup}{ess\,sup}

\newtheorem{theorem}{Theorem}[section]
\newtheorem{proposition}[theorem]{Proposition}
\newtheorem{lemma}[theorem]{Lemma}
\newtheorem{corollary}[theorem]{Corollary}
\newtheorem{remark}[theorem]{Remark}
\newtheorem{definition}[theorem]{Definition}
\newtheorem{example}[theorem]{Example}

\usepackage[dvipsnames]{xcolor}

\newcommand{\rev}[1]{#1}

\title[Nonlocal Diffusion with Noise]{A Numerical Method for a
  Nonlocal Diffusion Equation with Additive Noise}
\author{Georgi Medvedev}
\author{Gideon Simpson}
\date{\today}
\subjclass[2010]{65C30, 60H15, 60H35, 34C15}
\keywords{stochastic differential equation, nonlocal differential equation, numerical methods, convergence, synchronization, Kuramoto model, coupled oscillators}

\begin{document}

\maketitle
\begin{abstract}
We consider a nonlocal evolution equation representing the continuum
  limit of a large ensemble of interacting particles on graphs forced by noise. The two
  principle ingredients of the continuum model are a nonlocal term and Q-Wiener process 
  describing the interactions among the particles in the network and stochastic forcing
  respectively. The network connectivity is given by a square integrable function 
  called a graphon.

  We prove that the initial value problem for the continuum model
  is well-posed. Further, we construct a semidiscrete (discrete in space and continuous in time)
  and a fully discrete schemes for the nonlocal model. The former is obtained by a discontinuous
  Galerkin method and the latter is based on further discretizing time using the Euler-Maruyama
  method. We prove convergence and estimate the rate of convergence in each case.
  For the semidiscrete scheme, the rate of convergence estimate is expressed in terms of the regularity
  of the graphon, Q-Wiener process, and the initial data. We work in generalized Lipschitz spaces,
  which allows to treat models with data of lower regularity. This is important
  for applications as many interesting types of connectivity including small-world and power-law
  are expressed by graphons that are not smooth. The error analysis of the fully discrete scheme,
  on the other hand, reveals that for some models common in applied science, one has a higher
  speed of convergence than that predicted by the standard estimates for the Euler-Maruyama
  method. The rate of convergence analysis is supplemented with detailed numerical experiments,
  which are consistent with our analytical results.

  As a by-product, this work presents a rigorous justification for
  taking continuum limit for a large class of interacting dynamical systems on graphs subject to noise.
 \end{abstract}

  \section{Introduction}

  \subsection{The model}
In this work, we study an initial value problem (IVP) for the following
stochastically forced nonlocal evolution equation
\begin{subequations}
\label{e:kuramoto}
\begin{align}
\begin{split}
  du(t,x) &= \left\{ f(t,u) + \int K(x,y) S(u(t,x), u(t,y)) dy\right\}dt + dW(t,x),
\end{split}\\
  \label{e:kuramoto-ic}
  u(0,x)&=g(x),
\end{align}
\end{subequations}
where $u(t,x)$ is a real-valued  process defined on $[0,T]\times I^d$
with $T>0$ being an arbitrary but fixed time horizon and $I:=[0,1]$ throughout this paper.
The Gaussian process $W(t,x)$ will be defined below. The domain of integration
on the right--hand side of \eqref{e:kuramoto} is implicitly assumed to be 
$I^d$. The same convention will be used every time the spatial domain of integration  is not specified.

Equation \eqref{e:kuramoto} is a phenomenological
model of a continuous population of interacting particles subject to stochastic forcing. Function $f\left(t,u(t,x)\right)$
defines the intrinsic dynamics of a given particle at point $x\in I^d$  and time $t>0$, while the integral term on the right
hand side of \eqref{e:kuramoto} describes the interaction with other particles in the population. Here, the 
function $S(u(t,x), u(t,y))$ models pairwise interactions between particles located at $x\in I^d$ and $y\in I^d$ and
a measurable bounded $K(x,y)$ describes spatial connectivity between particles.

One way to arrive at a model of the form \eqref{e:kuramoto} is from the continuum limit of a dynamical system for a discrete population
of interacting particles \cite{Med14a, Med14b}. The continuous Kuramoto model of coupled phase oscillators
\cite{Med19, Laing17}
and neural fields \cite{CGP14} are two prominent examples of models of this type. Another class of models
leading to \eqref{e:kuramoto} are nonlocal diffusion equations \cite{AMRT10} including nonlinear and fractional diffusion
models \cite{Vazquez17, TEJ17, PabQuiRod16}. Other examples include models in population dynamics
\cite{BouCal10, CarFife05}, swarming \cite{MotTad14}, and peridynamics \cite{Du2018}, to name a few.

We complete the formulation of \eqref{e:kuramoto} by specifying 
assumptions on $f, K,$ and $S$. We assume that $f:[0,T]\times \R\to\R$   satisfies a linear growth bound and a Lipschitz condition:
\begin{subequations}
\label{e:fAssumptions}
\begin{align}
    \label{e:fBound}
    |f(t,u)| &\leq A_f + B_f |u|,\\
    \label{e:fLip}
    |f(t,u) - f(t',u')|&\leq L_f(|t-t'| + |u-u'|),
\end{align}
\end{subequations}
with positive constants $A_f$, $B_f$, and $L_f$.
$S:\R^2\to \R$ also  satisfies linear growth and Lipschitz conditions
\begin{subequations}
\label{e:SAssumptions}
\begin{align}
\label{e:SBound}
    |S(u,v)| & \leq  A_S + B_S(|u| + |v|),\\
\label{e:SLip}
    |S(u,v) - S(u',v')|&\leq L_S (|u-u'| + |v-v'|).
\end{align}
\end{subequations}
Again, $A_S$, $B_S$, and $L_S$ are positive constants.
For the interaction kernel, it will be necessary to assume both
\begin{subequations}
\label{e:Kbounds}
\begin{align}
   K_1 & \equiv \esssup_{x\in I^d}\rev{\int} |K(x,y)|^2 dy <\infty,\\
   K_2 & \equiv \esssup_{y\in I^d}\rev{\int} |K(x,y)|^2 dx <\infty.
\end{align}
\end{subequations}

Finally, we define $W(t,x)$. Let $\mathbf{Q}$ be a positive self-adjoint trace class operator
on \rev{$\cH:=L^2(I^d)$}. Let $\lambda_k, k\in\N,$ denote the eigenvalues of $\bQ$ arranged in the
decreasing order, counting multiplicity, and let $e_k\in \cH$ be the corresponding orthonormal
eigenfunctions. Then $W$,  a $Q$-Wiener Gaussian process is given via its Karhunen-Lo\`eve expansion as
\begin{equation}\label{QWien}
  W(t,x)=\sum_{k=1}^\infty \sqrt{\lambda_k} e_k(x) B_k(t),
\end{equation}
where the $B_k(t), k\in\N,$ are independent Brownian motions.

\subsection{The Galerkin approximation}

We next introduce a continuous in time Galerkin discretization of \eqref{e:kuramoto}.  First, the  domain $V=I^d$ is partitioned as
\begin{equation}
\label{e:Vcell}
\begin{split}
V^n_{\bi} &= (x_{i_1-1},x_{i_1}]\times (x_{i_2-1},x_{i_2}]\times\dots
\times (x_{i_d-1},x_{i_d}], \\ 
\bi&=(i_1,i_2, \dots, \rev{i_d})\in [n]^d,
\end{split}
\end{equation}
where
\begin{equation}
    \label{e:OneDMesh}
    x_i= ih, \; h=n^{-1},\; i\in\{0,1,\dots, n\}.
  \end{equation}
Next, the Galerkin basis is defined as 
\begin{equation}
\label{e:Hnsubspace}
    \mathcal{H}^n=\{\chi^n_{\bi}(x),\;  \bi\in
    [n]^d\},\quad \chi^n_{\bi}(x)  :=\1_{V^n_{\bi}}(x),
  \end{equation}
  where $\1_A$ is the indicator function of set $A$.
  Substituting
  \begin{equation}\label{step-u}
    u^n(t,x)=\sum_{\bi\in [n]^d} u^n_\bi (t)\chi_\bi^n(x),
  \end{equation}
  into \eqref{e:kuramoto}, and  projecting with respect to $L^2$  onto \rev{$\mathcal{H}^n$},
  we  arrive that the following semidiscrete IVP
\begin{subequations}
\label{e:finite_kuramoto}
\begin{align}
  du^n_{\bi} &=\Big\{f(t, u^n_{\bi}) + h^d\sum_{\bj\in [n]^d}  K^n_{\bi\bj} S(u^n_{\bi},u^n_{\bj}) \Big\} dt+ dW^n_{\bi}\\
  \label{e:finite_kuramoto-ic}
  u^n_{\bi}(0)& = g^n_{\bi}, \; \bi\in \rev{[n]^d},
\end{align}
\end{subequations}
where 
\begin{subequations}
\begin{align}
\label{e:Knij}
    K^n_{\bi\bj} & = h^{-2d} \iint K(x,y)\chi^n_{\bi}(x)\chi^n_{\bj}(y)dxdy,\\
    W^n_{\bi}(t) & = h^{-d} \inner{W(t,\cdot)}{\chi^n_\bi},\\
    g^n_{\bi} &= h^{-d} \inner{g}{\chi^n_\bi}.
\end{align}
\end{subequations}
Here, $\inner{\cdot}{\cdot}$ stands for the inner product of $\mathcal{H} = L^2(I^d)$.  \rev{The double integral in \eqref{e:Knij} is over $I^d\times I^d$; again, unless otherwise indicated, such double integrals will be over this set.}

\subsection{Organization and main results}

We begin our study by establishing well-posedness of the IVP \eqref{e:kuramoto}. This is the content of
Theorem~\ref{thm:kuramoto} and subsequent results in Section \ref{s:IVP}.  A fundamental challenge in
studying \eqref{e:kuramoto} is that the nonlocal term does not introduce smoothing
into the flow.  This is in contrast to, say, a stochastic semilinear heat equation, where the heat kernel would
provide such smoothing.  Indeed, the lack of smoothing is what precludes us from studying space-time white noise forcing in our framework.

After that we turn to the semidiscrete model \eqref{e:finite_kuramoto}, using it as a basis for  constructing a
numerical schemes for the original IVP \eqref{e:kuramoto}.
Theorem~\ref{t:galerkin-converge} establishes that for  \eqref{e:finite_kuramoto}, as
$n\to \infty$, we recover \eqref{e:kuramoto}.   \rev{To obtain rates of convergence, it is necessary to make
additional assumptions on the regularity of the kernel $K(x,y)$ and the process $W$.}
Following \cite{KVMed20}, we use generalized Lipschitz spaces to measure the regularity of $K$
and use the spectral properties of $\bQ$ to classify the
regularity of $W$ to arrive at a rate of convergence, with respect to $n$, in Theorem~\ref{t:rate},
which appears in Section \ref{s:rate}.

Section \ref{s:fulldiscrete} contains our last analytical results. They concern the convergence of the fully discretized problem,
in both space and time, where Euler-Maruyama time stepping is used.  The key results appear
in Theorem \ref{t:fulldiscrete}, and an improved estimate is given in Theorem \ref{t:fulldiscrete2} for a key case
of \eqref{e:kuramoto}, with a trigonometric function for $S$.  Both results establish strong, mean square, convergence.
The fully discrete problem is addressed by splitting the error into a contribution from the spatial discretization of
the associated time discretized problem and the contribution to the error due to the time step in the spatially
continuous problem.  The analysis of the spatial error in the time discretized problem is a natural extension of Theorem~\ref{t:rate}.  A classical analysis of Euler-Maruyama applies, but more effort is needed to obtain the higher order convergence;  again, there is a novel analytical challenge due to the lack of smoothing in the model.

We verify the sharpness of our convergence results with numerical experiments in Section \ref{s:numerics}.
There, we run ensembles of independent trials for different values of particle number $n$ and time step $\Delta t$ and
confirm the predicted scalings in $n$ at fixed $\Delta t$ and vice versa.  These experiments also highlight the
transition between when the error is \rev{dominated by the} deterministic terms and when it is dominated by the
stochastic term.

We conclude with a discussion in Section \ref{s:discussion}, reviewing our results and highlighting open challenges.
Additional computations are given in Appendix \ref{s:supplement}.

\subsection{Related work}
This work is related to two lines of research. On the one hand, there has been a
recent effort in developing numerical methods for nonlocal diffusion  equations
\cite{Du2019b, NochOtaSal16, Noch16, Noch19, BBPN18}. Our contribution to this research is that
first, we consider a stochastically forced problem; second, we work with kernels that may not have much
more regularity beyond integrability; and finally, our model has nonlinear diffusivity and, in this respect,
is a somewhat more general than a typical nonlocal diffusion equation. On the other hand, the systems of SODEs
like \eqref{e:finite_kuramoto}, \eqref{e:finite_kuramoto-ic} may be viewed as interacting diffusions
  on graphs \cite{Lucon2020}. A common framework for modeling interacting diffusions is based on the
  nonlinear process introduced by Sznitman \cite{Sznitman91}. The evolution of each particle is described
  by an implicit nonlinear diffusion equation, which in addition to the unknown state variable involves its probability
  law (see, e.g., \cite{Lucon2020}). In practice, integrating such systems also requires integrating a McKean-Vlasov PDE
  in addition to the system of SODEs for individual particles.
  Our semidiscrete model \eqref{e:finite_kuramoto}, \eqref{e:finite_kuramoto-ic} provides an alternative
  continuum model of interacting diffusions on graphs. A central question in the theory of interacting diffusions
  is analytical description of the continuum (thermodynamic) limit for the system as the number of particles
  tends to infinity. Theorems~\ref{thm:kuramoto} and \ref{t:rate} justify the nonlocal model
  \eqref{e:kuramoto}, \eqref{e:kuramoto-ic} as a continuum limit for \eqref{e:finite_kuramoto},
  \eqref{e:finite_kuramoto-ic} \rev{in the same way as \cite[Theorem~3.1]{Med19} justifies the continuum limit for models without diffusion}.

\section{The initial value problem}
\label{s:IVP}

In this section, we formulate the IVP for the nonlocal diffusion model and study its well-posedness.

\subsection{Preliminaries}
Let $\{\cF_t\subset \cF, t\ge 0\}$ be a normal filtration associated with $W(t)$
\cite{liu2015stochastic}.
Further, let
$T>0$ be arbitrary, but fixed. For $p\ge 2,$ we define $\HpT$, the space of $\cH$-valued
predictable processes\footnote{For the definition of a predictable process and other
  terminology used in  the theory of stochastic integration in Hilbert spaces, we refer
  to \cite{liu2015stochastic}.} $u(t),$ $t\in [0,T],$ such that 
\begin{equation}
    \label{e:Tnorm}
    \trinorm{u}_{p,T} = \esssup_{t\in [0, T]}\trinorm{u(t)}_p,
  \end{equation}
  where $\trinorm{u}_p:=\E[\norm{u(t)}^p]^{1/p}$ and $\norm{\cdot}$ is the norm of $\cH$.  For $\eqref{e:kuramoto}$, it is just $L^2(I^d)$.  $(\HpT, \trinorm{\cdot}_{p,T})$ is a Banach space (cf.~\cite{liu2015stochastic}).

  \subsection{Existence of solutions}
  We first prove existence for  a more general model and then specialize this result to
  \eqref{e:kuramoto}. While the proof is standard (cf. \cite{da2014stochastic, liu2015stochastic}), we include it for completeness.  Consider the equation
\begin{equation}
\label{e:generic}
    du = \bN[t,u] dt +  dW, \quad u(0) = \xi,
\end{equation}
where
$\bN\rev{[t,\bullet]}:\cH\to\cH$ for every $t\in [0,T]$ and $\xi$ is $\cF_0$-measurable random variable.  This problem is posed on an an abstract separable Hilbert space, $\cH$; for \eqref{e:kuramoto} $\cH = L^2(I^d)$.  Further, we assume
\begin{align}
 \label{e:NBound}
     \|\bN[t, u]\|&\leq A_{N} + B_{N}\|u\|\\
\label{e:NLip}
     \|\bN[t,u]-\bN[s, v]\| &\leq  L_{N}  \|u -v\|
\end{align}
for any $u,v\in\cH$ and $t,s\in [0,T]$.

A predictable process $u(t), t\in [0,T],$ is called a mild solution of \eqref{e:generic} if
\begin{equation}
    \label{e:mild}
    u(t) = \xi+ \int_0^t \bN [s,u(s)]ds +  W(t)
  \end{equation}
  holds for all $t\in [0,T]$ a.s. and
  \begin{equation}
    \label{e:mild1}
        \P\paren{\int_0^T \|u(t)\|^2 dt< \infty} =1.
    \end{equation}

\begin{theorem}
    \label{t:existence}
    Let $\xi \in L^{p}(\Omega,\mathcal{F}_0,\P;\cH)$ for some even $p\in \N$.
    Then there exists a unique mild solution to \eqref{e:generic}
    such that
    \begin{equation}
\label{e:databound}
    \trinorm{u}_{p,T}\leq C(1 + \trinorm{\xi}_p),
\end{equation}
where the constant $C$ depends on $T$, but not $\trinorm{\xi}_p$.
\end{theorem}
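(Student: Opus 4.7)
The plan is to apply the Banach fixed-point theorem to the Picard map
\[
  \Phi[u](t) := \xi + \int_0^t \bN[s,u(s)]\,ds + W(t)
\]
on the space $\HpT$. First, I would check that $\Phi$ is well-defined, i.e.\ that it sends $\HpT$ into itself. Using Minkowski's inequality in $L^p(\Omega;\cH)$, Jensen's inequality in time, and the linear growth bound \eqref{e:NBound}, one gets
\[
  \trinorm{\Phi[u](t)}_p \le \trinorm{\xi}_p + \int_0^t\bigl(A_N + B_N\trinorm{u(s)}_p\bigr)ds + \trinorm{W(t)}_p,
\]
and $\trinorm{W(t)}_p$ is finite (indeed bounded on $[0,T]$) because $\bQ$ is trace class: $\E\|W(t)\|^2 = t\,\Tr\bQ$, and since $W(t)$ is Gaussian with values in $\cH$, all even moments are finite and controlled by the second moment (this is where the assumption that $p$ be even is convenient, though really any $p\ge2$ would do by Fernique's theorem).

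Next, I would establish that some iterate $\Phi^k$ is a contraction on $\HpT$, or equivalently, endow $\HpT$ with the equivalent weighted norm
\[
  \trinorm{u}_{p,T,\alpha} := \esssup_{t\in[0,T]} e^{-\alpha t}\trinorm{u(t)}_p
\]
and verify that for $\alpha$ large enough $\Phi$ becomes a strict contraction. Indeed, using \eqref{e:NLip},
\[
  \trinorm{\Phi[u](t)-\Phi[v](t)}_p \le L_N\int_0^t \trinorm{u(s)-v(s)}_p\,ds \le \frac{L_N}{\alpha}e^{\alpha t}\trinorm{u-v}_{p,T,\alpha},
\]
so choosing $\alpha > L_N$ gives a contraction. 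The unique fixed point $u$ is the desired mild solution. The integrability condition \eqref{e:mild1} is immediate from $u\in\HpT$.

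For the a priori bound \eqref{e:databound}, I would plug the fixed point back into \eqref{e:mild}, take $p$-th moments, apply Minkowski and Jensen as above, and obtain
\[
  \trinorm{u(t)}_p^p \le C_1\bigl(\trinorm{\xi}_p^p + 1\bigr) + C_2\int_0^t \trinorm{u(s)}_p^p\,ds,
\]
after which Gr\"onwall's inequality yields $\trinorm{u(t)}_p \le C(1+\trinorm{\xi}_p)$ uniformly in $t\in[0,T]$; taking essential supremum gives \eqref{e:databound}. Uniqueness follows either from the fixed-point argument directly or by applying Gr\"onwall to the difference of two solutions.

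The only non-routine step is making sure that $W$ genuinely sits in $L^p(\Omega;\cH)$ with uniformly bounded $p$-moments on $[0,T]$; everything else is the standard Picard iteration scheme adapted to an abstract Hilbert space, and the lack of any smoothing operator (no semigroup appears because $\bN$ is a bounded nonlinearity, not an unbounded generator) actually simplifies the argument compared to the semilinear stochastic PDE case.
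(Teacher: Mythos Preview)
Your proposal is correct and follows the same overall strategy as the paper: apply the Banach fixed-point theorem to the Picard map, use the linear growth bound \eqref{e:NBound} together with the trace-class moment estimate for $W$ to show the map is well defined, use the Lipschitz bound \eqref{e:NLip} to get a contraction, and finish the a~priori estimate with Gronwall.

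The one genuine technical difference is how the contraction is obtained. The paper works on a short interval $[0,\tau]$ with $\tau=(L_N+1)^{-1}$ so that the map is a contraction in the unweighted norm, and then iterates forward in time to cover $[0,T]$. You instead stay on the full interval $[0,T]$ and pass to the equivalent weighted norm $e^{-\alpha t}\trinorm{u(t)}_p$ with $\alpha>L_N$. Both devices are standard and interchangeable; your version avoids the bookkeeping of patching local solutions, while the paper's version keeps the norm fixed. One small point you gloss over that the paper does mention: you should note that $\Phi$ preserves predictability (if $u$ is predictable then so is $\int_0^t\bN[s,u(s)]\,ds$, hence $\Phi[u]$), since membership in $\HpT$ requires this.
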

\begin{proof}[Proof of Theorem \ref{t:existence}]
  Let $\tau=(L_N+1)^{-1}$ and $\xi\in L^p(\Omega, \cF_0, \P; \cH)$,
  $u\in \scH_\tau^p$ and define
\begin{equation}
\label{e:Jfunctional}
\mathbf{J}[u](t) = \xi + \int_0^t \mathbf{N}(s,u(s))ds +  W(t), \quad t\in [0,\tau].
\end{equation}
We want to show that $\bJ$ is a contraction on $\scH_\tau^p$. Since $u(t)$ is a predictable process then so is $\int_0^t \mathbf{N}(s,u(s))ds$ and, consequently, $\mathbf{J}[u](t)$ is predictable too. By the triangle inequality and
\eqref{e:NBound}, for $t\in [0,\tau]$, we have
\begin{equation}\label{bound-J}
\begin{split}
  \trinorm{\bJ [u](t)}_p &\leq \trinorm{\xi}_p +
  \int_0^t \trinorm{\bN(s,u(s))}_p ds  + \trinorm{W(t)}_p\\
  & \leq \trinorm{\xi}_p + \int_0^t \rev{(A_{N} + B_{N} \trinorm{u(s)}_p)}ds +
  \sup_{t\in [0,\tau]}\trinorm{W(t)}_p \\
  &\leq \trinorm{\xi}_p  + \tau \rev{(A_{N} + B_{N} \trinorm{u}_{p,\tau})} +
  \sup_{t\in [0,\tau]}\trinorm{W(t)}_p.
\end{split}
\end{equation}
Since $W(t)$ is a Gaussian process with covariance operator $t \bQ$, we further have
(cf.~\cite{da2014stochastic}) 
\begin{equation}\label{moment}
    \sup_{t\in [0,\tau]}\trinorm{W(t)}_p \leq c_p \sqrt{\tau \Tr\bQ}.
  \end{equation}
  for some $c_p>0$.  The combination of \eqref{bound-J} and \eqref{moment} yields
\begin{equation}\label{preGron}
  \trinorm{\mathbf{J}[u]}_{p,\tau} \leq \trinorm{\xi}_p  +
  \tau \rev{(A_{N} + B_{N} \trinorm{u}_{p,\tau})} + c_p \sqrt{\tau \Tr \bQ}< \infty.
\end{equation}

Next, we demonstrate that $\mathbf{J}$ is a contraction:
\begin{equation*}
    \begin{split}
      \trinorm{\mathbf{J}[u](t) - \mathbf{J}[v](t)}_{p,\tau}&\leq
      \int_0^\tau \trinorm{\bN(s,u(s)) - \bN(s,v(s)) }_p\\
        &\leq \int_0^\tau L_{N}\trinorm{u(s) - v(s)}_pds\\
        & \leq \tau L_{N} \trinorm{u - v}_{p,\tau}.
    \end{split}
\end{equation*}
On account of our choice of $\tau,$ by the Banach contraction mapping principle, $\bJ$ has a unique fixed point in $\scH_\tau^p$.
This yields a unique mild solution of the initial value problem \eqref{e:generic} on
$[0,\tau]$. Using $u(\tau)$ as the initial condition, the local solution can be further extended
to $[0, 2\tau]$ and by repeating this argument again and again, it is extended eventually to $[0,T]$.
Thus, we have constructed a unique mild solution in $\scH_T^p$.  Finally, \eqref{e:databound} follows from \eqref{bound-J} and Gronwall's inequality.
\end{proof}

In addition, we immediately have continuous dependence upon the data and continuity in time:
\begin{corollary}
\label{c:contdep}
Under the same assumptions as Theorem \ref{t:existence}, the solution  depends continuously on initial data
  \begin{equation}\label{cont-data}
\vvvert u(t) - u'(t) \vvvert_{p,T}\lesssim \vvvert \xi - \xi' \vvvert_p
\end{equation}
for any $\xi, \xi' \in L^2(\Omega, \cF_0, \P;\cH)$.
\end{corollary}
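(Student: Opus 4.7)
The plan is to subtract the two mild-solution integral equations and exploit the cancellation of the noise, reducing the estimate to a deterministic-style Gronwall argument on the scalar function $\phi(t):=\trinorm{u(t)-u'(t)}_p$.

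More precisely, let $u$ and $u'$ be the mild solutions in $\scH_T^p$ corresponding to initial data $\xi$ and $\xi'$ (both in $L^p(\Omega,\cF_0,\P;\cH)$), which exist and are unique by Theorem~\ref{t:existence}. Driving both by the \emph{same} $Q$-Wiener process $W$, the mild formulation \eqref{e:mild} gives
\begin{equation*}
u(t)-u'(t)=(\xi-\xi')+\int_0^t\bigl(\bN[s,u(s)]-\bN[s,u'(s)]\bigr)\,ds,
\end{equation*}
with the stochastic integral terms cancelling exactly. Taking the $L^p(\Omega;\cH)$ norm, applying the triangle inequality (and Minkowski's integral inequality to exchange the order of the $\trinorm{\cdot}_p$ norm and the Bochner integral), and then invoking the Lipschitz bound \eqref{e:NLip}, I obtain
\begin{equation*}
\trinorm{u(t)-u'(t)}_p\leq \trinorm{\xi-\xi'}_p+L_{N}\int_0^t\trinorm{u(s)-u'(s)}_p\,ds.
\end{equation*}

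At this point the argument becomes purely deterministic: setting $\phi(t):=\trinorm{u(t)-u'(t)}_p$, Gronwall's inequality yields $\phi(t)\leq e^{L_{N}t}\trinorm{\xi-\xi'}_p$ for every $t\in[0,T]$. Taking the essential supremum over $t\in[0,T]$ then produces
\begin{equation*}
\trinorm{u-u'}_{p,T}\leq e^{L_{N}T}\trinorm{\xi-\xi'}_p,
\end{equation*}
which is exactly \eqref{cont-data} with implicit constant $e^{L_{N}T}$.

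The only subtle point is making sure that the quantity $\phi(t)$ is measurable in $t$ and integrable, so that Gronwall applies. This is guaranteed because both $u,u'\in\scH_T^p$, i.e.\ $\trinorm{u(\cdot)}_p,\trinorm{u'(\cdot)}_p\in L^\infty([0,T])$ by the definition \eqref{e:Tnorm}, so $\phi\in L^\infty([0,T])$ as well. Otherwise there is no real obstacle—the proof is genuinely routine and mirrors the contraction estimate already performed inside the proof of Theorem~\ref{t:existence}, so it could alternatively be done on the short interval $[0,\tau]$ with $\tau=(L_N+1)^{-1}$ and then iterated across $[0,T]$, giving the same exponential constant.
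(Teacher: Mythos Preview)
Your proof is correct and follows exactly the same approach as the paper: subtract the two mild formulations so the noise cancels, apply the Lipschitz bound \eqref{e:NLip} under the integral, and conclude with Gronwall's inequality. You simply include more justification (Minkowski, measurability of $\phi$, the explicit constant $e^{L_N T}$) than the paper does.
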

\begin{proof}
To show \eqref{cont-data} note
\begin{equation*}
\begin{split}
    \vvvert u(t) - u'(t)\vvvert_p&\leq \vvvert \xi - \xi'\vvvert_p + \int_{0}^t \vvvert \mathbf{N}[s,u(s)] - \mathbf{N}[s, u'(s)]\vvvert_pds \\
    &\leq \vvvert \xi - \xi'\vvvert_p + L_{N}\int_{0}^t \vvvert u(s) - u'(s)\vvvert_pds
\end{split}
\end{equation*}
The Gronwall inequality yields \eqref{cont-data}.
\end{proof}

\begin{corollary}
\label{c:cont}
Under the same assumptions as Theorem \ref{t:existence}, the solution is continuous in time for any $p\geq 2$:
  \begin{equation} \label{u-cont}
    \trinorm{u(t) - u(s)}_p\lesssim \sqrt{|t-s|}.
  \end{equation}
\end{corollary}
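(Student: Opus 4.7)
The plan is to start from the mild formulation \eqref{e:mild}, which gives, for $0\le s\le t\le T$,
\begin{equation*}
u(t)-u(s)=\int_s^t \bN[r,u(r)]\,dr + \bigl(W(t)-W(s)\bigr).
\end{equation*}
Applying the triangle inequality in the $\trinorm{\cdot}_p$ norm, I would bound the two pieces separately.

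For the drift term, I would use the linear growth condition \eqref{e:NBound} and the a~priori bound \eqref{e:databound} from Theorem~\ref{t:existence}, which gives $\trinorm{u(r)}_p\le \trinorm{u}_{p,T}\le C(1+\trinorm{\xi}_p)$. Thus $\trinorm{\bN[r,u(r)]}_p \le A_N + B_N \trinorm{u}_{p,T}$ is uniformly bounded in $r\in[0,T]$, and so
\begin{equation*}
\int_s^t \trinorm{\bN[r,u(r)]}_p\,dr \lesssim |t-s|.
\end{equation*}

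For the stochastic increment, I would invoke the same Gaussian moment estimate used in \eqref{moment}: since $W(t)-W(s)$ is an $\cH$-valued centered Gaussian with covariance $(t-s)\bQ$, one has $\trinorm{W(t)-W(s)}_p \le c_p\sqrt{(t-s)\Tr\bQ}$. Combining the two bounds yields
\begin{equation*}
\trinorm{u(t)-u(s)}_p \lesssim |t-s| + \sqrt{|t-s|\,\Tr\bQ} \lesssim \sqrt{|t-s|},
\end{equation*}
where in the last step I use $|t-s|\le T$ so that $|t-s|\le\sqrt{T}\sqrt{|t-s|}$, absorbing $T$ and $\Tr\bQ$ into the implicit constant.

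There is no real obstacle here: the only point requiring a bit of care is justifying that the integral-type bound on $\trinorm{\bN[r,u(r)]}_p$ is legitimate (it is, since $u\in\HpT$ and $\bN$ has linear growth), and that the Gaussian moment estimate applies uniformly in the increment, which is standard for $Q$-Wiener processes with trace-class $\bQ$. The result holds for all $p\ge 2$, with the implicit constant depending on $p$, $T$, $\Tr\bQ$, $A_N$, $B_N$, and $\trinorm{\xi}_p$.
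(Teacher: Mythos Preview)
Your proof is correct and follows essentially the same approach as the paper: subtract the mild formulations at $t$ and $s$, apply the triangle inequality, bound the drift integral using \eqref{e:NBound} together with the a~priori estimate \eqref{e:databound}, bound the $Q$-Wiener increment by $c_p\sqrt{(t-s)\Tr\bQ}$, and absorb the $|t-s|$ term into $\sqrt{|t-s|}$ using $|t-s|\le T$.
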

\begin{proof}
\begin{equation*}
\begin{split}
    \trinorm{u(t) - u(s)}_p&\leq \int_{s}^t \trinorm{\bN (r,u(r))}_pdr + \trinorm{W(t) - W(s)}_p\\
    & \leq \int_s ^ t \rev{(A_{N} + B_{N}\trinorm{u(r)}_p) dr +  c_p \sqrt{ \Tr \bQ  |t-s|}}\\
    & \leq \rev{(A_{N} + B_{N}\trinorm{u}_{p,T}) |t-s| +  c_p\sqrt{\Tr \bQ  |t-s|}}\\
    & \leq \rev{\paren{(A_{N} + B_{N}\trinorm{u}_{p,T})\sqrt{T} + 
      c_p\sqrt{ \Tr\bQ} }\sqrt{|t-s|}}
\end{split}
\end{equation*}
for any $0\leq s \leq t \leq T$.
Consequently,  $\E[\norm{u(t) - u(s)}^p]\lesssim |t-s|^{p/2}$.

\end{proof}

\subsection{Well-Posedness of the nonlocal evolution equation}
We now prove well-posedness of our model, \eqref{e:kuramoto}.
\begin{theorem}\label{thm:kuramoto}
The IVP for \eqref{e:kuramoto} subject to \eqref{e:fBound},
\eqref{e:fLip}, \eqref{e:SBound}, \eqref{e:SLip}, \eqref{e:Kbounds}
  and given initial condition in $L^p(\Omega, \cF_0,\P;\cH)$ for even
  $p\ge 2$ has a unique mild solution. It depends continuously upon the data and is continuous in time,
  as in Corollaries \ref{c:contdep} and \ref{c:cont}.
\end{theorem}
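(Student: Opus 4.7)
The plan is to recast \eqref{e:kuramoto} in the abstract form \eqref{e:generic} by setting
\[
\bN[t,u](x) := f(t,u(x)) + \int K(x,y)\,S(u(x),u(y))\,dy,
\]
and then verify that $\bN\colon [0,T]\times\cH\to\cH$ satisfies the linear growth bound \eqref{e:NBound} and the Lipschitz bound \eqref{e:NLip} on $\cH=L^2(I^d)$. Once this is done, Theorem \ref{t:existence} applied with $\xi=g$ furnishes a unique mild solution of \eqref{e:generic}, which by construction solves \eqref{e:kuramoto}; continuous dependence on initial data and the $\sqrt{|t-s|}$-H\"older continuity in time then follow immediately from Corollaries \ref{c:contdep} and \ref{c:cont}.

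For \eqref{e:NBound}, I would split $\norm{\bN[t,u]}^2$ via $(a+b)^2\leq 2a^2+2b^2$ into a local contribution from $f$ and a nonlocal contribution. The growth hypothesis \eqref{e:fBound} together with $|I^d|=1$ gives $\int|f(t,u(x))|^2\,dx\lesssim A_f^2+B_f^2\norm{u}^2$. For the nonlocal piece, Cauchy--Schwarz in $y$ followed by \eqref{e:SBound} and the bound $K_1<\infty$ yields
\[
\Bigl|\int K(x,y)S(u(x),u(y))\,dy\Bigr|^2 \leq K_1\int\bigl(A_S+B_S(|u(x)|+|u(y)|)\bigr)^2\,dy \lesssim K_1\bigl(A_S^2+B_S^2|u(x)|^2+B_S^2\norm{u}^2\bigr).
\]
Integrating over $x\in I^d$ and taking square roots delivers \eqref{e:NBound} with constants $A_N,B_N$ depending on $A_f,B_f,A_S,B_S,K_1$.

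For \eqref{e:NLip}, the analogous splitting reduces matters to the integral term, and \eqref{e:SLip} furnishes the pointwise estimate $|S(u(x),u(y))-S(v(x),v(y))|\leq L_S(|u(x)-v(x)|+|u(y)-v(y)|)$. Applying Cauchy--Schwarz in $y$ together with the $K_1$ bound from \eqref{e:Kbounds} gives
\[
\Bigl|\int K(x,y)\bigl(S(u(x),u(y))-S(v(x),v(y))\bigr)\,dy\Bigr|^2 \leq 2L_S^2 K_1\bigl(|u(x)-v(x)|^2+\norm{u-v}^2\bigr).
\]
Integration over $x\in I^d$ combined with the $f$-piece handled via \eqref{e:fLip} then yields \eqref{e:NLip}. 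The symmetric bound $K_2<\infty$ is not strictly needed here, but it provides an alternative route via dualizing the roles of $x$ and $y$.

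The main obstacle I anticipate is bookkeeping in the nonlocal Lipschitz estimate: Cauchy--Schwarz must be applied in the $y$-variable so that, after invoking Fubini and the uniform-in-$x$ integrability of $K(x,\cdot)^2$, the pointwise contribution $|u(y)-v(y)|^2$ collapses to the $L^2(I^d)$ norm of $u-v$. Once \eqref{e:NBound} and \eqref{e:NLip} are established with explicit constants $A_N,B_N,L_N$ depending only on $A_f,B_f,L_f,A_S,B_S,L_S,K_1$, the theorem is a direct consequence of Theorem \ref{t:existence} and its corollaries.
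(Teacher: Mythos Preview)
Your proposal is correct and follows essentially the same approach as the paper: cast \eqref{e:kuramoto} in the abstract form \eqref{e:generic} with $\bN[t,u]=f(t,u)+\bK[u]$, verify the growth and Lipschitz hypotheses \eqref{e:NBound}--\eqref{e:NLip} on $\cH=L^2(I^d)$, and then invoke Theorem~\ref{t:existence} together with Corollaries~\ref{c:contdep} and~\ref{c:cont}. The only minor difference is in how Cauchy--Schwarz is applied to the nonlocal term: you first separate $K$ from $S$ via $\bigl|\int K(x,y)S\,dy\bigr|^2\le K_1\int|S|^2\,dy$ and only then expand, which indeed uses only $K_1$; the paper instead expands $|S|$ inside the square and bounds the resulting $\iint|K|^2|u(y)|^2\,dx\,dy$ term with $K_2$, so your remark that $K_2$ is not strictly needed for this step is correct.
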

Note that for the existence of the solution to \eqref{e:kuramoto}, we do not require the Lipschitz
continuity with respect to $t$
in \eqref{e:fLip}.  We will require this later for convergence of the time discretized problem.

\begin{proof}
It is sufficient to verify the conditions of Theorem~\ref{t:existence} for 
\begin{equation}\label{e:our_problem}
    \mathbf{N}[t,u] = f(t,u) + \mathbf{K}[u],
\end{equation}
where $\bK:\cH\to\cH$ is defined by
\begin{equation}
\label{e:Kop}
     \bK[u](x) = \int K(x,y) S(u(x), u(y))dy.
 \end{equation}
 We then proceed in the following steps:
\begin{enumerate}[label=\arabic*., leftmargin=\parindent]
\item First, note that  \eqref{e:fBound}, \eqref{e:fLip}, and \eqref{e:SBound},
\eqref{e:SLip} imply
\begin{subequations}
\label{e:fLq}
\begin{align}
\label{e:fLpBound}
    \|f(t,u)\|_{L^q(I^d)}&\leq A_f + B_f \|u\|_{L^q(I^d)}\\
\label{e:fLqLip}    
  \|f(t,u) - f(t, u')\|_{L^q(I^d)}&\leq L_f  \|u-u'\|_{L^q(I^d)}\\
  \label{e:SLpBound}
    \|S(u,v)\|_{L^q(I^d \times I^d)}&\leq A_S + B_S(\|u\|_{L^p(I^d)} + \|v\|_{L^q(I^d)})\\
    \label{e:SLpLip}
    \|S(u,v) - S(u', v')\|_{L^q(I^d \times I^d)}& \leq L_S(\|u-u'\|_{L^q(I^d)} + \|v - v'\|_{L^q(I^d)})
\end{align}
\end{subequations}
for any $u,v \in L^q(\rev{I^d}), q\in [1,\infty]$. 
In addition, if $\rev{B_f} =0$, then for $p\in [1,\infty)$,
$f(t,\cdot):L^q(I^d)\to L^\infty(I^d)$, with
$\|f(t,u)\|_{L^\infty(I^d)}\leq \rev{A_f}$.
Likewise, if $B_S=0$, for $q\in [1,\infty)$, $S(\cdot,\cdot):L^q(I^d)\times L^q(I^d)\to L^\infty(I^d\times I^d)$, with $\|S(u,v)\|_{L^\infty}\leq A_S$.
 
\item Next, we  show
\begin{align}\label{KBound}
  \|\bK[u]\|_{L^2(I^d)}&\leq A_K + B_K\|u\|_{L^2(I^d)},\\
  \label{KLip}
    \|\bK[u]-\bK[v]\|_{L^2(I^d)} & \leq L_K \|u-v\|_{L^2(I^d)}.
\end{align}
for some nonnegative  $A_K$, $B_K$ and $C_K$ and  all $u,v \in L^2(\rev{I^d})$.  If we can obtain these results, we are done.

\item  To this end, note
\begin{equation*}
\begin{split}
    \|\bK[u]\|_{L^2(I^d)}^2 &\leq \int \set{\int\abs{K(x,y)}\abs{S(u(x),u(y))}dy}^2 dx\\
    &\leq \int \set{\int\abs{K(x,y)}(A_S + B_S|u(x)| + B_S|u(y)|)dy}^2 dx\\
    &\leq 3 \iint |K(x,y)|^2 (A_S^2 + B_S^2|u(x)|^2 + B_S |u(y)|^2) dxdy\\
    &\leq 3A_S^2 \|K\|_{L^2(I^d\times I^d)}^2 + 3 B_S^2 \esssup_{x }\int |K(x,y)|^2 dy\|u\|_{L^2(I^d)}^2 \\
    &\quad + 3 B_S^2 \esssup_{y }\int |K(x,y)|^2 dx\|u\|_{L^2(I^d)}^2
\end{split}
\end{equation*}
Since
\begin{equation*}
    \|K\|_{L^2(I^d\times I^d)}^2\leq \esssup_{x }\int |K(x,y)|^2 dy< \infty
\end{equation*}
\eqref{KBound} holds.

\item For \eqref{KLip},
\begin{equation*}
\begin{split}
\|\bK[u] - \bK[v]\|_{L^2(I^d)}^2&\leq L_S^2 \int\set{\int |K(x,y)| (|u(x) - v(x)| + |u(y) - v(y)|dy}^2 dx\\
&\leq 2 L_S^2 \iint |K(x,y)|^2 (|u(x) -v(x)|^2 + |u(y)-v(y)|^2)dxdy\\
&\leq 2 L_S^2( \esssup_{x }\int |K(x,y)|^2 dy+ \esssup_{y }\int |K(x,y)|^2 dx)\|u-v\|_{L^2(I^d)}^2.
\end{split}
\end{equation*}

\end{enumerate}

\end{proof}

\section{Convergence of the Galerkin scheme}
\label{s:convergence}

In this section, we study convergence of the Galerkin scheme in $L^2(\Omega,\mathcal{F}, \P;L^2(I^d))$ with the associated mean square norm, $\trinorm{\bullet}_2$.  We will also make use of the space time norm \eqref{e:Tnorm} in the case $p=2$.  Additionally, we will assume that our interaction function, $S$, is bounded, which is to say $B_S =0$ in \eqref{e:SBound}.

Let $\mathbf{P}_n$ denote an $L^2$--projector  from $\cH$ onto $\cH^n,$ where $\cH^n$ is defined by \eqref{e:Hnsubspace}.  Our main result of this section is:
\begin{theorem}\label{t:galerkin-converge}
  Let $u(t,x)$ stand for the solution of the IVP for \eqref{e:kuramoto} subject to
  the initial condition $u(0,\cdot)=g\in \cH$ and let $u^n(t,x)$ stand
  for the solution of the finite--dimensional problem
  \eqref{e:finite_kuramoto} subject to $u^n(0,\cdot)=\mathbf{P}_ng$.  Also assume that the interaction term $S$ has $B_S=0$ in \eqref{e:SBound}.
Then
\begin{equation}
\label{e:galerkinconv}
\lim_{n\to \infty} \trinorm{u - u^{n}}_{2,T}=0.
\end{equation}
\end{theorem}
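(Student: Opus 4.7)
The strategy is to lift $u^n$ to $\cH$ via \eqref{step-u} and view it as a mild solution in $\cH$, then subtract from the mild form of $u$ and apply Gronwall's inequality. First, I would verify that, as a piecewise constant function, $u^n$ satisfies in $\cH$
\begin{equation*}
u^n(t) = \mathbf{P}_n g + \int_0^t \bigl\{ f(s,u^n(s)) + \mathbf{P}_n \bK[u^n(s)] \bigr\}\, ds + \mathbf{P}_n W(t),
\end{equation*}
using that $f(s,u^n(s)) \in \cH^n$ (so $\mathbf{P}_n$ acts as the identity there) and that the cellwise averages $K^n_{\bi\bj}$ and $W^n_{\bi}$ are the coefficients in $\cH^n$ of $\mathbf{P}_n \bK[u^n(s)]$ and $\mathbf{P}_n W(t)$, respectively.

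Setting $e^n(t) := u(t) - u^n(t)$ and subtracting the two mild forms, the key decomposition is
\begin{equation*}
\bK[u] - \mathbf{P}_n \bK[u^n] = \bigl(\bK[u] - \bK[u^n]\bigr) + \bigl(\bI - \mathbf{P}_n\bigr)\bK[u^n],
\end{equation*}
where the first summand is controlled by $L_K \|e^n\|$ via \eqref{KLip}, and the second consistency piece I would further split as $(\bI - \mathbf{P}_n)\bK[u] + (\bI - \mathbf{P}_n)(\bK[u^n] - \bK[u])$ to exchange $u^n$ for $u$ inside the unprojected kernel. Taking $\trinorm{\cdot}_2$, applying \eqref{e:fLip}, and using $\|\mathbf{P}_n\|_{\cH\to\cH} \leq 1$, I would arrive at
\begin{equation*}
\trinorm{e^n(t)}_2 \leq R^n + (L_f + 2 L_K)\int_0^t \trinorm{e^n(s)}_2\, ds,
\end{equation*}
with consistency remainder
\begin{equation*}
R^n := \|g - \mathbf{P}_n g\| + \sup_{t\in[0,T]} \trinorm{(\bI - \mathbf{P}_n) W(t)}_2 + \int_0^T \trinorm{(\bI - \mathbf{P}_n) \bK[u(s)]}_2\, ds.
\end{equation*}
Gronwall then reduces the theorem to showing $R^n \to 0$.

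To verify this, I would treat each summand of $R^n$ separately. The initial-data term $\|g - \mathbf{P}_n g\|$ vanishes since $\bigcup_n \cH^n$ is dense in $L^2(I^d)$. For the Wiener contribution, the Karhunen-Lo\`eve expansion \eqref{QWien} yields
\begin{equation*}
\E\|(\bI - \mathbf{P}_n) W(t)\|^2 = t \sum_{k=1}^\infty \lambda_k \|(\bI - \mathbf{P}_n) e_k\|^2,
\end{equation*}
and since $\bQ$ is trace class with summands dominated by $\lambda_k$ and vanishing pointwise in $k$, dominated convergence gives $\to 0$ uniformly on $[0,T]$. The kernel consistency term is the main obstacle: $\bK[u(s)]$ is a random element of $\cH$ tied to the solution of the nonlinear equation with no a priori regularity beyond $L^2$. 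The role of the hypothesis $B_S=0$ is precisely to provide the deterministic pointwise bound $|\bK[u(s,\omega)](x)| \leq A_S K_1^{1/2}$ (Cauchy-Schwarz combined with \eqref{e:Kbounds}), which bounds $\trinorm{(\bI - \mathbf{P}_n) \bK[u(s)]}_2$ uniformly in $n,s,\omega$. Together with the a.s. convergence $(\bI - \mathbf{P}_n)\bK[u(s)](\omega) \to 0$ in $\cH$ (density of $\cH^n$), two successive applications of dominated convergence, first in $\omega$ and then in $s \in [0,T]$, complete the argument.
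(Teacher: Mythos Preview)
Your argument is correct, but it differs from the paper's in how the consistency error of the nonlocal term is isolated and controlled. The paper writes the mild form of $u^n$ with $\mathbf{K}^n[u^n]$ rather than $\mathbf{P}_n\mathbf{K}[u^n]$ (these coincide on $\cH^n$, as you note), and then compares $\mathbf{K}[u^n]$ directly to $\mathbf{K}^n[u^n]$. The key observation there is that, because $u^n\in\cH^n$ makes $S(u^n(x),u^n(\cdot))$ piecewise constant in $y$, one can replace $\mathbf{P}_nK$ by the one-variable projection $\mathbf{P}_n^{(1)}K$ inside the integral; together with $|S|\le A_S$ this gives the \emph{deterministic, solution-independent} bound
\[
\trinorm{\mathbf{K}[u^n]-\mathbf{K}^n[u^n]}_2 \le A_S\,\|(\mathbf{I}-\mathbf{P}_n^{(1)})K\|_{L^2(I^d\times I^d)}.
\]
By contrast, you route the consistency error through $(\mathbf{I}-\mathbf{P}_n)\mathbf{K}[u(s)]$, which depends on the random solution $u$, and then close the argument with two layers of dominated convergence (in $\omega$ and in $s$) using the uniform bound $|\mathbf{K}[u](x)|\le A_S K_1^{1/2}$. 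Both uses of $B_S=0$ are legitimate, but they serve different purposes: the paper extracts an explicit rate in terms of the data $K$, which feeds directly into the quantitative estimate of Theorem~\ref{t:rate}, whereas your dominated-convergence route is purely qualitative and would not by itself yield a rate. On the other hand, your decomposition is slightly more elementary in that it avoids the one-variable projection identity. For the Wiener term, your dominated-convergence argument over the eigenindex $k$ is equivalent to the paper's $\epsilon/2$ splitting in Lemma~\ref{l:convergeW}.
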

For simplicity, we have taken the initial condition to be deterministic.  The proof of the theorem relies on the following two lemmas.
\begin{lemma}\label{l:convergence}
  There is a positive constant $C=C(A_S, L_f, L_S, K_1, T)$ such that
\begin{equation*}
\trinorm{u - u_{n}}_{2,T}\le C\left(
  \norm{(\mathbf{I}-\mathbf{P}_n)  g} + \|(\bI-\bP^{(1)}_n)K\|_{L^2({I^d\times I^d})}
  +\trinorm{(\mathbf{I}-\mathbf{P}_n) W}_{2,T}\right),
\end{equation*}
where $\bP^{(1)}_n$ and $\bP^{(2)}_n$ stand for 
\rev{$L^2$--projectors of $L^2(I^{2d})$ onto $\mathcal{H}^n\otimes\mathcal{H}$
and $\mathcal{H}\otimes\mathcal{H}^n$} respectively, i.e.,
\begin{alignat*}{1}
  \langle \left(\bI-\bP_n^{(1)}\right) K(\cdot, y), \chi_\bi^n\rangle &=0 \quad  \rev{y\in I^d\; \mathrm{a.e.}},\; \bi\in \nd,\\
  \langle  \left(\bI-\bP_n^{(2)}\right) K(x, \cdot), \chi_\bi^n \rangle &=0 \quad \rev{x\in I^d\; \mathrm{a.e.}},\;\bi\in \nd.
\end{alignat*}
\end{lemma}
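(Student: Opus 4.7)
The plan is to compare $u$ and $u^n$ through their mild formulations. Interpreted as the element $\sum_\bi u^n_\bi \chi^n_\bi$ of $\mathcal{H}^n\subset \mathcal{H}$, the Galerkin solution satisfies
\begin{equation*}
u^n(t) = \bP_n g + \int_0^t \bracket{f(s,u^n(s)) + \bP_n \bK[u^n(s)]}\, ds + \bP_n W(t),
\end{equation*}
because $g^n_\bi$ and $W^n_\bi(t)$ are the coefficients of $\bP_n g$ and $\bP_n W(t)$, $f(s,u^n)$ is already piecewise constant, and a direct computation shows that $\sum_\bi \chi^n_\bi(x)\, h^d\sum_\bj K^n_{\bi\bj}\, S(u^n_\bi,u^n_\bj)$ coincides with $\bP_n\bK[u^n](x)$ on each cell $V^n_\bi$. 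Subtracting this from the mild formulation for $u$ yields an integral equation for $e(t):=u(t)-u^n(t)$ whose inhomogeneous data are $(\bI-\bP_n)g$ and $(\bI-\bP_n)W(t)$.

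For the nonlocal contribution I would split
\begin{equation*}
\bK[u] - \bP_n\bK[u^n] = \bigl(\bK[u]-\bK[u^n]\bigr) + (\bI - \bP_n)\bK[u^n].
\end{equation*}
The first summand is bounded in $L^2$ by $L_K\norm{e}$ with $L_K\lesssim L_S\sqrt{K_1}$, via the same Cauchy--Schwarz-in-$y$ argument used in the proof of Theorem~\ref{thm:kuramoto} (applying Cauchy--Schwarz before taking the essential supremum uses only $K_1$). The main technical step, which I expect to be the principal obstacle, is rewriting the second summand in terms of the projection of $K$ in its first argument only. Since $u^n\equiv u^n_\bi$ on $V^n_\bi$, for $x\in V^n_\bi$ one has
\begin{equation*}
(\bP_n\bK[u^n])(x) = h^{-d}\int_{V^n_\bi}\int K(x',y) S(u^n_\bi, u^n(y))\, dy\, dx' = \int (\bP^{(1)}_n K)(x,y)\, S(u^n(x),u^n(y))\, dy,
\end{equation*}
so that $(\bI-\bP_n)\bK[u^n](x) = \int [(\bI-\bP^{(1)}_n)K](x,y)\, S(u^n(x),u^n(y))\, dy$. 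Crucially, no projection of $K$ in its second argument appears, which is why $\bP^{(2)}_n$ does not enter the bound.

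Since we assume $B_S=0$, $|S|\le A_S$ pointwise; Cauchy--Schwarz in $y$ together with $|I^d|=1$ then gives $\norm{(\bI-\bP_n)\bK[u^n]}^2 \le A_S^2\, \|(\bI-\bP^{(1)}_n) K\|_{L^2(I^d\times I^d)}^2$. Combining this with $\norm{f(s,u)-f(s,u^n)}\le L_f\norm{e}$ and taking $\trinorm{\cdot}_2$ norms via Minkowski's integral inequality,
\begin{equation*}
\trinorm{e(t)}_2 \le \norm{(\bI-\bP_n)g} + T A_S\, \|(\bI-\bP^{(1)}_n)K\|_{L^2(I^d\times I^d)} + \trinorm{(\bI-\bP_n)W}_{2,T} + (L_f+L_K)\int_0^t \trinorm{e(s)}_2\, ds.
\end{equation*}
Gronwall's inequality applied to $t\mapsto \trinorm{e(t)}_2$, followed by taking the supremum over $t\in[0,T]$, then yields the claimed estimate with $C=C(A_S,L_f,L_S,K_1,T)$.
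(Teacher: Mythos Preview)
Your proof is correct and follows essentially the same route as the paper: subtract the mild formulations, split the nonlocal contribution as $(\bK[u]-\bK[u^n]) + (\bK[u^n]-\bK^n[u^n])$, identify $\bK^n[u^n]$ with $\int(\bP_n^{(1)}K)(x,y)\,S(u^n(x),u^n(y))\,dy$ using that $u^n$ is piecewise constant, bound the kernel term via $|S|\le A_S$, and close with Gronwall. Your variant of the Cauchy--Schwarz step for the $\bK[u]-\bK[u^n]$ term (factoring out $\int K(x,y)^2\,dy$ first) yields a Lipschitz constant depending only on $K_1$, which in fact matches the lemma's stated dependence $C=C(A_S,L_f,L_S,K_1,T)$ better than the paper's own computation, where both $K_1$ and $K_2$ appear in the intermediate bound \eqref{e:nonlocalerr}.
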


\begin{lemma}\label{l:convergeW}
\begin{equation*}
\lim_{n\to\infty}\trinorm{(\mathbf{I}-\mathbf{P}_n) W}_{2,T}=0.
\end{equation*}
\end{lemma}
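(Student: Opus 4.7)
The plan is to reduce the problem to a convergent-series computation via the Karhunen--Loève expansion \eqref{QWien} and then apply a dominated-convergence argument indexed by $k$.

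First, I would compute the left-hand side explicitly. Since $W(t,x) = \sum_{k=1}^{\infty} \sqrt{\lambda_k}\, e_k(x) B_k(t)$ and $\mathbf{P}_n$ is a deterministic bounded linear operator on $\cH$, applying $(\bI - \bP_n)$ termwise gives
\begin{equation*}
(\bI-\bP_n)W(t,\cdot) = \sum_{k=1}^{\infty} \sqrt{\lambda_k}\,(\bI-\bP_n)e_k\, B_k(t).
\end{equation*}
Using independence of the $B_k$'s together with $\E[B_k(t)^2]=t$, one obtains
\begin{equation*}
\E\bigl[\norm{(\bI-\bP_n)W(t)}^2\bigr] = t\sum_{k=1}^{\infty} \lambda_k\, \norm{(\bI-\bP_n)e_k}^2.
\end{equation*}
Because this is a nondecreasing linear function of $t$, taking the essential supremum on $[0,T]$ yields
\begin{equation*}
\trinorm{(\bI-\bP_n)W}_{2,T}^2 = T\sum_{k=1}^{\infty} \lambda_k\, \norm{(\bI-\bP_n)e_k}^2.
\end{equation*}

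Next, I would pass to the limit in the series. For each fixed $k$, since $e_k\in\cH=L^2(I^d)$ and the piecewise-constant subspaces $\cH^n$ are dense in $L^2(I^d)$ (the mesh size $h=n^{-1}\to 0$), the standard $L^2$-approximation result gives $\norm{(\bI-\bP_n)e_k}\to 0$ as $n\to\infty$. On the other hand, $\bP_n$ is an orthogonal projection so $\norm{(\bI-\bP_n)e_k}^2 \le \norm{e_k}^2=1$, which gives the majorization $\lambda_k\norm{(\bI-\bP_n)e_k}^2 \le \lambda_k$, with $\sum_k \lambda_k = \Tr\bQ <\infty$ by the trace-class assumption. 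Dominated convergence for series then yields
\begin{equation*}
\sum_{k=1}^{\infty} \lambda_k\, \norm{(\bI-\bP_n)e_k}^2 \xrightarrow{n\to\infty} 0,
\end{equation*}
and multiplying by $T$ completes the proof.

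There is no real obstacle here; the only points requiring care are (i) justifying the termwise computation of $\E[\norm{(\bI-\bP_n)W(t)}^2]$ from the Karhunen--Loève sum (which follows from independence of the Brownian motions and Parseval on $\cH$, or equivalently from the identity $\Tr[(\bI-\bP_n)\bQ(\bI-\bP_n)]=\sum_k\lambda_k\norm{(\bI-\bP_n)e_k}^2$), and (ii) observing that the $\esssup$ in $t$ is attained at $t=T$ because the second moment is linear in $t$. The density of $\cH^n$ in $L^2(I^d)$ does not require any regularity of $e_k$, so no additional hypotheses on $\bQ$ beyond being trace class are needed.
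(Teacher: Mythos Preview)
Your proposal is correct and essentially the same as the paper's proof: both reduce to the identity $\trinorm{(\bI-\bP_n)W}_{2,T}^2 = T\sum_{k}\lambda_k\|(\bI-\bP_n)e_k\|^2$ and then pass to the limit using the trace-class bound $\sum_k\lambda_k<\infty$. The only cosmetic differences are that the paper derives the identity via the Pythagorean decomposition and an eigenfunction expansion of the $\chi^n_\bi$, and writes out the tail-splitting $\epsilon/2$ argument explicitly rather than citing dominated convergence for series.
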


\begin{proof}[Proof of Theorem~\ref{t:galerkin-converge}]
For any $g\in L^2(I^d)$, we have $\lim_{n\to\infty}\|(\mathbf{I}-\mathbf{P}_n)g\|=0$ (cf.~\cite[Proposition~2.6]{Cha17}). Thus, Theorem~\ref{t:galerkin-converge} follows from Lemmas~\ref{l:convergence} and \ref{l:convergeW}.
\end{proof}

\begin{proof}[Proof of Lemma~\ref{l:convergence}]
  Mild solutions of the IVP for \eqref{e:kuramoto} and \eqref{e:finite_kuramoto} satisfy
  \begin{align}\label{e:mildKM}
    u(t,x) &= g(x) +\int_0^t \{ f(s, u(s,x))+ \mathbf{K} [u(s,\cdot)](x)\} ds + W(t,x)\\
    \label{e:mild-finiteKM}
 u^n(t,x) &= g^n(x) +\int_0^t \{ f(s, u^n(s,x))+ \mathbf{K}^n [u^n(s,\cdot)](x)\} ds + W^n(t,x),
  \end{align}
  where
  \begin{equation}\label{K-n}
    \mathbf{K}^n [v(\cdot)](x)=\int K^n(x,y) S\left(v(x), v(y)\right)dy,
  \end{equation}
  \rev{and $K^n = \mathbf{P}_n K$ is the $L^2(I^d\times I^d)$ projection of $K$ with coefficients as in \eqref{e:Knij}.}
  Our proof then proceeds with the following steps.
  \begin{enumerate}[label=\arabic*., leftmargin=\parindent]

\item Subtracting \eqref{e:mild-finiteKM} from \eqref{e:mildKM} and using the triangle inequality, 
\begin{equation}\label{e:triangle}
  \begin{split}
     \Delta^n(t) & :=\trinorm{ u(t,\cdot)-u^n(t,\cdot)}_2\\
    & \le \|g-g^n\| +\int_0^t\left(\trinorm{f(s, u(s,\cdot))-f(s, u^n(s,\cdot))}_2 \right.\\
    &\left. \quad +\trinorm{\mathbf{K} [u(s,\cdot)]-\mathbf{K} [u^n(s,\cdot)]}_2 
        +\trinorm{\mathbf{K} [u^n(s,\cdot)]-\mathbf{K}^n [u^n(s,\cdot)]}_2\right) ds\\
        &\quad + \trinorm{ W(t,\cdot)-W^n(t,\cdot)}_2.
    \end{split}
  \end{equation}

\item  By \eqref{e:fLip}, 
  \begin{equation} \label{e:use-fLip}
    \trinorm{f(s, u(s,\cdot))-f(s, u^n(s,\cdot))}_2 \le L_f \Delta^n(s).
    \end{equation}
Using \eqref{e:SLip}, \eqref{e:Kbounds} and Jensen inequality, we have
\begin{equation}
\label{e:nonlocalerr}
  \begin{split}
    &\trinorm{\mathbf{K} [u(s,\cdot)]-\mathbf{K} [u^n(s,\cdot)]}_2^2\\
    &= \E\bracket{\int \left( \int K(x,y)\left\{ S(u(s,x), u(s,y))-
        S(u^n(s,x), u^n(s,y)) \right\}dy\right)^2dx}\\
    &\le  L^2_S\int \int K(x,y)^2\E\left\{ |u(s,x)-u^n(s,x)|+|u(s,y)- u^n(s,y)| \right\}^2dxdy\\
    &\le   2L^2_S\rev{(K_1+K_2)} \trinorm{u(s,\cdot)-u^n(s,\cdot)}_2^2.
  \end{split}
  \end{equation}
  The constants, $K_i$, were defined in \eqref{e:Kbounds}.  Thus,
  \begin{equation}\label{e:Kterm}
  \trinorm{\mathbf{K} [u(s,\cdot)]-\mathbf{K} [u^n(s,\cdot)]}_2  \le\rev{\sqrt{2(K_1+K_2)}}L_S\Delta^n(s).
\end{equation}

\item We next need the following observation. If $\phi\in\cH^n$ then
$$
S\left(\phi(x), \phi(\cdot)\right)\in \cH^n\quad \forall x\in I^d
$$
and
\begin{alignat*}{1}
  \mathbf{K}^n [\phi(\cdot)](x) &= \int (\bP_n K)(x,y) S\left(\phi(x),\phi(y)\right)dy\\
  &= \int (\bP^{(2)}_n \bP^{(1)}_n K)(x,y) S\left(\phi(x),\phi(y)\right)dy\\
  &= \int (\bP^{(1)}_n K)(x,y) S\left(\phi(x),\phi(y)\right)dy.
  \end{alignat*}
  In particular,
  \begin{equation}\label{new-observation}
    \mathbf{K}^n [u^n(t,\cdot)](x)=  \int (\bP^{(1)}_n K)(x,y) S\left(u^n(t, x), u^n(t, y)\right)dy.
    \end{equation}

    \item Using \eqref{new-observation} and $|S|\le A_S$,
    we have
\begin{equation}\label{K-Kn}
  \begin{split}
    \trinorm{\mathbf{K} [u^n(s,\cdot)]-\mathbf{K}^n [u^n(s,\cdot)]}_2 
    & \le A_S \|(I-\bP^{(1)}_n) K\|_{L^2(I^d\times I^d)}.
    \end{split}
  \end{equation}
 
    \item Plugging \eqref{e:use-fLip}, \eqref{e:Kterm}, and \eqref{K-Kn}
    into \eqref{e:triangle} and using Gronwall's inequality,
    we obtain
  \begin{equation*}
  \begin{split}
   \sup_{t\in [0,T]} \Delta^n(T) &\le e^{(\sqrt{2}L_SK_1+L_f)T}
   \left(\norm{(\mathbf{I}-\mathbf{P}_n)  g}\right.\\
   &\quad \left.+
     TA_S\|{(\bI-\mathbf{P}^{(1)}_n) \rev{K}}\|_{L^2(I^d\times I^d)} +
     \rev{\vvvert{(\mathbf{I}-\mathbf{P}_n) W}\vvvert_{2,T}}\right).
  \end{split}
    \end{equation*}
    \end{enumerate}
\end{proof}

\begin{proof}[Proof of Lemma~\ref{l:convergeW}]
      We begin \rev{by} calculating 
\begin{equation*}
\begin{split}
  \rev{\trinorm{(\mathbf{I} - \mathbf{P}_n)W(t)}_2^2}& =\E[\|W(t,\cdot)\|^2] - 2 \E[\inner{W(t,\cdot)}{\mathbf{P}_{n} W(t,\cdot)}]+
  \E[\|\mathbf{P}_{n} W(t,\cdot)\|^2]\\
  &= \E[\|W(t,\cdot)\|^2] - \E[\|\mathbf{P}_{n}W(t,\cdot)\|^2]\\
  &=  t \big(\Tr \bQ - \sum_{{\bi}\in[n]^d} h^{-d}\inner{ \bQ\chi^n_\bi}{\chi^n_\bi}\big).
\end{split}
\end{equation*}
Denote the error term
$$
\Delta^n:=\Tr \bQ - \sum_{{\bi}\in[n]^d} h^{-d}\inner{ \bQ\chi^n_\bi}{\chi^n_\bi}.
$$

\begin{enumerate}[label=\arabic*., leftmargin=\parindent]
\item  Expanding the $\chi^n_\bi$ functions in terms of the eigenfunctions of $\bQ$,
$$
\sum_{{\bi}\in[n]^d} \inner{ \bQ\chi^n_\bi}{\chi^n_\bi}=\sum_{k=1}^\infty \inner{\bQ e_k}{e_k}
\sum_{{\bi}\in[n]^d} \inner{\chi^n_\bi}{e_k}^2,
$$
so
\begin{equation}\label{e:def-Delta}
  \Delta^n=\sum_{k=1}^\infty \lambda_k \big(1-\sum_{{\bi}\in[n]^d} h^{-d}\inner{\chi^n_\bi}{e_k}^2\big) = \sum_{k=1}^\infty \lambda_k \|\bP_n^{\perp} e_k\|^2.
\end{equation}
As the projection operator is orthogonal and the $e_k$ are orthonormal, $\|\bP_n^{\perp} e_k\|\leq 1$.

\item Next, let $\epsilon>0$ be arbitrary but fixed.  Since $\bQ$ is trace class, there is $m=m(\epsilon)\in\N$ such that
  \begin{equation}\label{e:remainder}
    0\le \sum_{k=m+1}^\infty\lambda_k<{\frac{\epsilon}{2}}.
  \end{equation}
  Therefore,
  \begin{equation}\label{e:two-sums}
    \Delta^n\le {\frac{\epsilon}{2}} + \Tr\bQ
    \max_{k\in [m]}\|\bP_n^{\perp} e_k\|^2.
  \end{equation}
  
  \item As $n\to \infty$, we are assured that $ \|\bP_n^{\perp} e_k\|\to 0$ (cf.~\cite[Proposition~2.6]{Cha17}). Choosing $n_1=n_1(\epsilon, m)\in\N$ large enough,
  we have, that for all $k\leq m$ and $n\geq n_1$
  \begin{equation}\label{e:second-piece}
  \|\bP_n^{\perp} e_k\|\le {\frac{\epsilon}{\Tr\bQ}}
  \end{equation}
  The combination of \eqref{e:two-sums} and \eqref{e:second-piece} proves that $\Delta^n \to 0$.
  \end{enumerate}
    \end{proof}

\section{The rate of convergence}
\label{s:rate}

To quantify the rate of convergence in Theorem~\ref{t:galerkin-converge}, we need to
impose additional regularity assumptions on the initial data, the kernel $\bK$, and the covariance
operator $\bQ$.
The regularity is well described by Lipschitz spaces, which we define following
\cite{KVMed20}.

\begin{definition}\label{d:Lp-modulus}
For $\phi\in L^p(\rev{I^d}), \,  p\ge 1,$ 
\begin{equation}\label{e:Lp-modulus}
\begin{split}
  \omega_{p}(\phi,\delta&)=\sup_{|h|\le \delta}
  \|\phi(\bullet+h)-\phi(\bullet)\|_{L^p(\rev{I^d_h\bigcap I^d}) },\; \delta>0,\\
  &\quad \rev{I^d_h=\{x\in\R^d: \; x+h\in I^d\},}
\end{split}
\end{equation}
is called the $L^p$-modulus of continuity of $\phi$.
For $\alpha\in (0,1],$ the  Lipschitz space  $\operatorname{Lip}\left(\alpha, L^p(\rev{I^d})\right)$
is defined as follows
\begin{equation}\label{e:Lip-space}
\begin{split}
\operatorname{Lip}\left(\alpha, L^p(\rev{I^d})\right)&=\left\{ \phi\in L^p(\rev{I^d}):\; \exists C>0 :\;
  \omega_p(\phi,\delta)\le C\delta^\alpha\right\},\\
\|\phi\|_{p,\alpha}&=\limsup_{\delta\to 0} \delta^{-\alpha}\omega_p(\phi,\delta).
\end{split}
\end{equation}
\end{definition}

We are now ready to state the main result of this section.
\begin{theorem}\label{t:rate}
  In addition to the assumptions of Theorem \ref{t:galerkin-converge}, let $\lambda_k, k\in\N$ be the eigenvalues of $\mathbf{Q}$ arranged
  in the decreasing order counting multiplicity and  $e_k\in
  L^2(I^d)$ be the corresponding normalized eigenfunctions.
  Let $g\in \operatorname{Lip}\left(\alpha, L^2(I^d)\right)$
and $K\in \operatorname{Lip}\left(\beta, L^2(\rev{I^{d}\times I^d})\right)$
 for some
  $\alpha, \beta\in (0,1]$. 
  Then 
  \begin{equation}\label{e:Galerkin-rate}
    \trinorm{u-u^n}_{2,T}\le C
\max\left\{ n^{-\alpha}, n^{-\beta}, \Psi(n) \right\},
  \end{equation}
  where \rev{
  \begin{equation}
\label{e:Psin}
    \Psi(n) = \Psi(n; \bQ) = \sqrt{\inf_{m\in\N}\rev{\left\{ 
\sum_{k=1}^m \lambda_k\omega_2(e_k, n^{-1})^2 + \sum_{k=m+1}^\infty\lambda_k\right\}}},
\end{equation}
where the eigenvalues $\lambda_k$ and eigenfunctions $e_k$ are those of $\bQ$} and $C>0$ is independent of $n$.
\end{theorem}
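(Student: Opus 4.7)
The plan is to apply Lemma~\ref{l:convergence} and estimate its three right-hand terms separately, using the regularity of $g$, $K$, and the eigenfunctions of $\bQ$. The single workhorse is a classical approximation estimate for the piecewise-constant $L^2$-projector,
\begin{equation*}
  \|(\bI-\bP_n)\phi\|_{L^2(I^d)} \le C_d\,\omega_2(\phi,n^{-1}),\qquad \phi\in L^2(I^d),
\end{equation*}
with $C_d$ depending only on the dimension. I would derive this by writing $\bP_n\phi$ on each cube $V^n_\bi$ as the mean value of $\phi$, applying Jensen's inequality to rewrite the $L^2$ error on that cube as a double integral of $|\phi(x)-\phi(y)|^2$ over $V^n_\bi\times V^n_\bi$, changing variables to $z=y-x$, summing in $\bi$, and observing that whenever $x$ and $x+z$ lie in the same cube one has $|z|\le h\sqrt d$, so the resulting inner norm is dominated by $\omega_2(\phi,h\sqrt d)^2$; subadditivity of $\omega_2$ absorbs the $\sqrt d$ into $C_d$.

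From this, the initial-data contribution is $\|(\bI-\bP_n)g\|\lesssim n^{-\alpha}$ since $g\in\operatorname{Lip}(\alpha,L^2(I^d))$. The kernel contribution is handled by the same argument on the product cells $V^n_\bi\times I^d$, on which $\bP^{(1)}_n$ acts as the mean in $x$; cube-by-cube Jensen then yields $\|(\bI-\bP^{(1)}_n)K\|_{L^2(I^{2d})}\le C_d\,\omega_2(K,n^{-1})\lesssim n^{-\beta}$, since shifts of $K$ in the $x$-coordinate alone are a special case of joint shifts covered by the hypothesis $K\in\operatorname{Lip}(\beta,L^2(I^{2d}))$.

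For the stochastic term, I would reuse the identity established in the proof of Lemma~\ref{l:convergeW},
\begin{equation*}
  \trinorm{(\bI-\bP_n)W(t)}_2^2 = t\sum_{k=1}^\infty \lambda_k\|\bP_n^\perp e_k\|^2,
\end{equation*}
and combine two bounds on each summand: the approximation estimate gives $\|\bP_n^\perp e_k\|\le C_d\,\omega_2(e_k,n^{-1})$, while orthogonality and normalization of $e_k$ give the trivial $\|\bP_n^\perp e_k\|\le 1$. For an arbitrary $m\in\N$, using the first bound for $k\le m$ and the second for $k>m$ produces
\begin{equation*}
  \trinorm{(\bI-\bP_n)W}_{2,T}^2 \le T\Bigl(C_d^2\sum_{k=1}^m \lambda_k\omega_2(e_k,n^{-1})^2 + \sum_{k=m+1}^\infty \lambda_k\Bigr),
\end{equation*}
and taking the infimum over $m$ gives $\lesssim \Psi(n)^2$. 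Assembling the three estimates via Lemma~\ref{l:convergence} yields \eqref{e:Galerkin-rate}. The main obstacle is exactly this last step and explains the form of $\Psi(n)$: a pure modulus-of-continuity bound fails because high-index eigenfunctions can oscillate arbitrarily badly relative to the mesh (so $\omega_2(e_k,n^{-1})$ need not decay in $k$), whereas a pure tail-of-spectrum bound ignores mesh refinement; the infimum over $m$ is the mechanism that lets the two bounds reinforce each other and produce a rate tied to the joint regularity of $\bQ$ and its eigenbasis.
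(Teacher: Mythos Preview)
Your proposal is correct and follows essentially the same approach as the paper: apply Lemma~\ref{l:convergence}, then control each of the three terms via the piecewise-constant projection estimate (the paper's Lemma~\ref{l:LipBound1}), splitting the stochastic series at an arbitrary $m$ exactly as you describe. The only cosmetic differences are that the paper handles the kernel term by first bounding $\|(\bI-\bP_n^{(1)})K\|\le\|(\bI-\bP_n)K\|$ (via $\bP_n=\bP_n^{(2)}\bP_n^{(1)}$) before invoking the projection estimate on $I^{2d}$, and that for the head $\Sigma_m$ of the stochastic series the paper passes through the cruder bound $m\max_{k\le m}\lambda_k\|\bP_n^\perp e_k\|^2$ rather than your direct term-by-term estimate---your version actually matches the stated $\Psi(n)$ more cleanly.
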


The proof of Theorem~\ref{t:rate} relies on the following lemma.
\begin{lemma}[cf.~\cite{KVMed20}]
\label{l:LipBound1} 
  Let $\phi\in L^p(\rev{I^d})$, $p\ge 1$\rev{, and let $\phi_n =\bP_n \phi$.} Then
\begin{equation*}
\|\phi-\phi_n\|_{L^p(I^d)}\le C \omega_p(\phi,\sqrt{d}n^{-1}),
\end{equation*}
where $C$ depends on $d$ but \rev{not} on $\phi$ or $n$.

In particular, if $\phi \in \Lip\left(\alpha, L^2(I^d)\right)$,
$\alpha\in (0,1]$,
\begin{equation}\label{Lipschitz}
\|\phi-\phi_n\|_{L^p(I^d)}\le C n^{-\alpha}.
\end{equation}
\end{lemma}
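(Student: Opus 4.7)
The plan is to exploit the explicit description of $\bP_n$ on the piecewise constant subspace $\cH^n$: since the basis functions $\chi^n_{\bi}$ are mutually orthogonal in $L^2$ with $\|\chi^n_{\bi}\|^2=h^d$, the $L^2$-projection is simply the cell-wise averaging operator
\begin{equation*}
(\bP_n\phi)(x)=\sum_{\bi\in\nd}\Big(\frac{1}{|V^n_{\bi}|}\int_{V^n_{\bi}}\phi(y)\,dy\Big)\chi^n_{\bi}(x).
\end{equation*}
This makes $\phi_n$ available in closed form, and the key observation is that for $x\in V^n_{\bi}$ the error is the centered average of increments,
\begin{equation*}
\phi(x)-\phi_n(x)=\frac{1}{|V^n_{\bi}|}\int_{V^n_{\bi}}\bigl(\phi(x)-\phi(y)\bigr)dy.
\end{equation*}

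The next step is to apply Jensen's inequality to the convex function $|\cdot|^p$ (for $p\ge 1$) inside each cell, integrate the resulting bound on $V^n_{\bi}$, and sum over $\bi\in\nd$. This yields
\begin{equation*}
\|\phi-\phi_n\|_{L^p(I^d)}^p\le \sum_{\bi\in\nd}\frac{1}{|V^n_{\bi}|}\iint_{V^n_{\bi}\times V^n_{\bi}}|\phi(x)-\phi(y)|^p\,dy\,dx.
\end{equation*}
I would then perform the change of variables $h=y-x$ for each fixed $\bi$: since $V^n_{\bi}$ is a cube of side $h=n^{-1}$, the difference set $V^n_{\bi}-V^n_{\bi}$ is contained in the $\ell^\infty$-ball of radius $n^{-1}$, hence in the Euclidean ball of radius $\sqrt{d}\,n^{-1}$. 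After swapping the order of integration, for each admissible shift $h$ the partial sums $\sum_{\bi}\int_{V^n_{\bi}\cap(V^n_{\bi}-h)}|\phi(x+h)-\phi(x)|^p\,dx$ are dominated by $\int_{I^d\cap I^d_h}|\phi(x+h)-\phi(x)|^p\,dx\le \omega_p(\phi,\sqrt{d}\,n^{-1})^p$. Since the set of admissible $h$ has volume $(2n^{-1})^d$, the factor $|V^n_{\bi}|^{-1}=n^d$ is precisely absorbed, producing the claimed estimate with a constant $C$ depending only on $d$.

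The second assertion is then immediate: if $\phi\in\operatorname{Lip}(\alpha,L^2(I^d))$, substituting $\omega_2(\phi,\sqrt{d}\,n^{-1})\lesssim (\sqrt{d}\,n^{-1})^\alpha$ into the first bound yields $\|\phi-\phi_n\|_{L^2}\le C n^{-\alpha}$, with the $\sqrt{d}^{\,\alpha}$ absorbed into $C$. The main technical care in the proof is the bookkeeping for the change of variables near the boundary of $I^d$—one must verify that the summation over cells really reconstructs an integral whose inner integrand is controlled by the modulus of continuity restricted to $I^d\cap I^d_h$—but since this restriction is automatically built into the definition of $\omega_p$, no sharper boundary argument is needed.
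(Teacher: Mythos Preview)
Your argument is correct and follows essentially the same route as the paper: express $\bP_n$ as cell-wise averaging, apply Jensen inside each cell, then change variables $y\mapsto x+h$ and swap the order of integration to bring in the $L^p$-modulus of continuity. The only cosmetic difference is that you integrate $h$ over the cube $[-n^{-1},n^{-1}]^d$ (volume $2^d n^{-d}$), whereas the paper enlarges to the Euclidean ball $B_{\sqrt{d}\,n^{-1}}$ and uses its volume for the constant; either choice yields a constant depending only on $d$.
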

\begin{remark}\label{r:Holder} 
 Equation~\ref{Lipschitz} with $\alpha=1$ yields the convergence rate for Lipschitz continuous functions.
\end{remark}
\begin{proof}[Proof of Lemma~\ref{l:LipBound1}]
We include a short proof adapted from \cite[Theorem~5]{HOH2010}. 
Using Jensen's inequality and Fubini's theorem, we have
\begin{equation*}
\label{Jensen}
\begin{split}
\|\phi-\phi_n\|_{L^p(I^d)}^p &= \sum_{\bi\in [n]^d} \int_{I^n_\bi} \left| n^{d} \int_{I^n_\bi} \left(\phi(x)-\phi(z)\right)
dz\right|^p dx\\
&\le n^d \sum_{\bi\in [n]^d} \int_{{I^n_\bi}} \int_{I^n_\bi} \left| \phi(x)-\phi(z) \right|^pdzdx\\
&\le \rev{n^{d} } \sum_{\bi\in [n]^d} \int_{I^n_\bi} \int_{B_{\sqrt{d}n^{-1}}:=\{|y|\le \sqrt{d}n^{-1}\} } 
\left| \phi(x)-\phi(x+y)\right|^p    1_{I^d}(x+y) dy dx\\
&= n^{d} \int_{B_{\sqrt{d}n^{-1}}} \int_{\rev{I^d}} \left| \phi(x)-\phi(x+y)\right|^p  1_{I^d}(x+y) dx dy\\
&\le \omega_p^p(\phi, \sqrt{d}n^{-1}) |B_{\sqrt{d}n^{-1}}|n^{d}\\
&=C\omega_p^p(\phi, \sqrt{d}n^{-1}),\quad C=C(d):=|B_{\sqrt{d}n^{-1}}|n^{d}=
{\frac{(\pi d)^{d/2}}{\Gamma\left({\frac{d} {2}}+1\right)}}.
\end{split}
\end{equation*}
where $|B_{\sqrt{d}n^{-1}}|$ stands for the volume of the hypersphere $B_{\sqrt{d}n^{-1}}$.
\end{proof}

\begin{example}\label{ex:Rate}
  Let $\mathbf{Q}=(-\Delta)^{-1}$ and $d=1$.  Then $\lambda_k=(\pi k)^{-2}$ and
  $e_k=\sqrt{2}\sin(\pi k x)$.
  \rev{By a direct application of the mean value theorem},
  \begin{equation*}
    \begin{split}
      &\int_0^1\left(\sin\left(\pi k (x+h)\right)-\sin\left(\pi k x\right) \right)^2 dx\\
      &\rev{=
     \int_0^1 \left(\cos(z_\star(x)) \pi k h\right)^2 dx\leq (\pi k h)^2}
  \end{split}
  \end{equation*}
      Thus,
      \rev{$\omega_2(e_k, h)\leq \pi k h$.}
      Consequently, by optimizing over $m$,  $\Psi(n) = \bigo(n^{-1/2})$.
\end{example}

  \begin{proof}[Proof of Theorem~\ref{t:rate}]
By Lemma \ref{l:convergence},
$$
\trinorm{u - u_n}_{\rev{2,T}}\le C\max\left\{
  \norm{(\mathbf{I}-\mathbf{P}_n)  g}, \|\rev{(\mathbf{I}-\bP^{(1)}_n) {K}}\|_{L^2(I^{d}\times I^{d})},
  \trinorm{(\mathbf{I}-\mathbf{P}_n) W}_{\rev{2,T}} \right\}.
$$
\rev{First, by Lemma~\ref{l:LipBound1}, $\norm{(\mathbf{I}-\mathbf{P}_n)  g}\lesssim n^{-\alpha}$.} 
\rev{Next, since we can write $\bP_n =\bP^{(2)}\bP^{(1)}$, where the projectors are over $L^2(I^d\times I^d)$ and $\bP_n$ is the projector in both $x$ and $y$, 
\begin{equation*}
    \|\bP_nK\|_{L^2(I^d\times I^d)}\leq \|\bP_n^{(1)}K\|_{L^2(I^d\times I^d)}.
\end{equation*}
Next, note that
\begin{equation*}
\begin{split}
    \|(I-\bP_n^{(1)})K\|_{L^2(I^d\times I^d)}^2 &= 
    \|K\|_{L^2(I^d\times I^d)}^2 -\|\bP_n^{(1)} K\|_{L^2(I^d\times I^d)}\\
    &\leq \|K\|_{L^2(I^d\times I^d)}^2 - \|\bP_n K\|_{L^2(I^d\times I^d)}^2
     = \|(I-\bP_n) K\|_{L^2(I^d\times I^d)}^2.
\end{split}
\end{equation*}
Consequently, we can apply  Lemma~\ref{l:LipBound1} again, now over $L^2(I^d \times I^d) = L^2(I^{2d})$ , to conclude
 $\|(\mathbf{I}-\bP^{(1)}_n)K\|_{L^2(I^{d}\times I^{d})}\lesssim n^{-\beta}$. 
}

\rev{It remains}
to estimate $\trinorm{(\mathbf{I}-\mathbf{P}_n) W}_T$. From the proof of Theorem~\ref{t:galerkin-converge},
it follows that
$$
\trinorm{(\mathbf{I}-\mathbf{P}_n) W}^2_{\rev{2,T}}\le T
\sum_{k=1}^\infty \lambda_k  \|\bP_n^{\perp} e_k\|^2
=:\Sigma.
\quad (\mbox{cf. \eqref{e:def-Delta}}).
$$
As in the proof of Theorem~\ref{t:galerkin-converge}, we decompose the sum above into
two contributions:
\begin{equation}\label{e:split}
\Sigma=\underbrace{\sum_{k=1}^m \lambda_k \|\bP_n^{\perp} e_k\|^2}_{\equiv\Sigma_m} +\underbrace{\sum_{k=m+1}^\infty\lambda_k \|\bP_n^{\perp} e_k\|^2}_{\equiv\Sigma_{\bar m}}
\end{equation}
where $m\in\N$ is to be determined.  Again, since the $e_k$ are orthonormal and $\bP_n^{\perp}$ is an orthogonal projector,
\begin{equation}\label{e:2ndsum}
  \Sigma_{\bar m}\le \sum_{k=m+1}^\infty \lambda_k <\Tr \bQ< \infty.
\end{equation}
On the other hand, using Lemma \ref{l:LipBound1}
\begin{equation} \label{1st-sum-a}
  \begin{split}
    \Sigma_m  \le m  \max_{k\in [m]} \lambda_k \|\bP_n^{\perp} e_k\|^2
                       & \le C m \max_{k\in [m]}\omega_2(e_k,\sqrt{d}n^{-1})^2.
                       \end{split}
\end{equation}
The combination of \eqref{e:split}, \eqref{e:2ndsum} and \eqref{1st-sum-a} completes the proof.  
\end{proof}

\section{Fully discrete analysis}
\label{s:fulldiscrete}

Convergence of the semidiscrete problem is interesting in its own right, as we may be interested in
the relationship between a discrete system of particles and its continuum limit 
\rev{(cf.~\cite{Med19})}.
For numerical integration of \eqref{e:kuramoto} in practice,
we must introduce a temporal discretization.  In this section, we analyze that
contribution to the error.

The full discretization of \eqref{e:kuramoto} with Euler-Maruyama time stepping is
\begin{subequations}
\label{e:kuramotodiscrete}
\begin{align}
    u^{n,k+1} &= u^{n,k} + f(t_k,u^{n,k})\Dt + \mathbf{K}^n[u^{n,k}]\Dt +  \Delta W^{n,k+1},\\
    u^{n,0} &= \mathbf{P}^n g
\end{align}
\end{subequations}
where $u^{n,k}$ is our approximation of \rev{the solution in the Galerkin space $\mathcal{H}^n$} at time $t_k$.   $\mathbf{K}^n$ is defined as in \eqref{K-n}, and
\begin{equation}
\label{e:Winc}
    \Delta W^{n,k+1} = \mathbf{P}^n(W(t_{k+1}) - W(t_k)) = W^{n,k+1} - W^{n,k}
\end{equation}
is the increment in the Gaussian process within the subspace.
Iterating, 
\begin{equation}
\label{e:kuramaotodiscretesum}
\begin{split}
    u^{n,k} &= u^{n,0}  + \sum_{j=0}^{k-1}\Delta t  f(t,u^{n,j}) + \sum_{j=0}^{k-1}\Delta t \mathbf{K}^n[u^{n,j}] + \sum_{j=0}^{k-1}  \Delta W^{n, j},\\
    &=u^{n,0}  + \sum_{j=0}^{k-1}\Delta t  f(t,u^{n,j}) + \sum_{j=0}^{k-1}\Delta t \mathbf{K}^n[u^{n,j}]+  W^{n,k}.
\end{split}
\end{equation}

Our goal is to obtain a convergence rate, with respect to both $n$, the spatial mesh, and $\Delta t$, the time step, for the error
\begin{equation}
\label{e:error}
    \Delta^{n,k} =\vvvert u(t_k) - u^{n,k}\vvvert_2
\end{equation}
along with the max error,
\begin{equation}
    \label{e:errornorm}
    \max_{k\leq M} \Delta^{n,k}
\end{equation}
We will assume that the time steps are chosen such that
\begin{equation}
    M = \frac{T}{\Dt} \in \mathbb{N}.
\end{equation}
Throughout, $n$ will be used to denote spatial discretization, while $j$ and $k$, will indicate the associated time, $t_j = j \Dt$.    As we noted after stating Theorem \ref{thm:kuramoto}, we will now make use of the Lipschitz continuity with respect to $t$ in assumption \eqref{e:fLip}.  

To better analyze time and spatial discretization error, we break the problem of estimating \eqref{e:errornorm} into two intermediate problems, one addressing only spatial error and another addressing only time error:
\begin{equation}
\label{e:error2}
    \Delta^{n,k} \leq \underbrace{\vvvert u(t_k) - u^k\vvvert_2}_{\equiv \Delta_t^k} + \underbrace{\vvvert u^k - u^{n,k} \vvvert_2}_{\equiv \Delta_x^{n,k}}.
\end{equation}
The term $\Delta_t^k$ accounts for only time discretization and $\Delta_x^{n,k}$ accounts for space discretization.  The time step $\Dt$ is still present in $\Delta_x^{n,k}$, but the error with respect to $n$ is uniform over $\Dt \in (0, \Dt_0)$ for any fixed $\Dt_0> 0$.  Decomposition \eqref{e:error2} introduces a new quantity, $u^k$, which corresponds to the discretization of \eqref{e:kuramoto} only in time,
\begin{equation}
\label{e:Dtkuramoto}
\begin{split}
    u^{k+1} &= u^k + f(t_k, u^k)\Dt + \bK[u^k]\Dt + \Delta W^{k+1}\\
    u^0 & = g
\end{split}
\end{equation}

For analysis, it is helpful to represent the exact solution as
\begin{equation}
\label{e:exactsum}
    u(t_k) = g + \sum_{j=0}^{k-1}\int_{t_{j}}^{t_{j+1}}f(s,u(s)ds  + \sum_{j=0}^{k-1}\int_{t_{j}}^{t_{j+1}}\mathbf{K}[u(s)] ds +  W(t_k),
\end{equation}
along with its time discretization, \eqref{e:Dtkuramoto},
\begin{equation}
    \label{e:Dtsum}
    u^{k}=u^{0}  + \sum_{j=0}^{k-1}\Delta t  f(t,u^{j}) + \sum_{j=0}^{k-1}\Delta t \mathbf{K}[u^{j}]+  \rev{W^{k}}.
\end{equation}

The main results of this section is:
\begin{theorem}
\label{t:fulldiscrete}
Under the same assumptions as those of Theorem \ref{t:rate}
\begin{equation*}
    \max_k\vvvert u(t_k) - u^{n,k}\vvvert_2 \lesssim \max\{ n^{-\alpha}, n^{-\beta}, \Psi(n)\} + \sqrt{\Delta t}
\end{equation*}
where $\Psi$ is defined in \eqref{e:Psin}.
\end{theorem}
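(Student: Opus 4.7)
The plan is to use the splitting \eqref{e:error2}, $\Delta^{n,k}\leq \Delta_t^k+\Delta_x^{n,k}$, and control the two pieces separately with bounds uniform in $k\in\{0,\dots,M\}$; taking $\max_k$ at the end then gives the theorem. Throughout I would write $\bN[t,u]=f(t,u)+\bK[u]$ as in \eqref{e:our_problem}, so that the Lipschitz conditions \eqref{e:fLip} and \eqref{KLip} combine into a single Lipschitz constant $L_N$ in $u$ (uniform in $t$) and a Lipschitz constant $L_f$ in $t$ inherited from $f$.

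For the spatial piece $\Delta_x^{n,k}$, I would essentially rerun the proofs of Lemma~\ref{l:convergence} and Theorem~\ref{t:rate}, but starting from the discrete-time identities \eqref{e:kuramaotodiscretesum} and \eqref{e:Dtsum} rather than the continuous-in-time mild formulations. Subtracting these and applying the triangle inequality yields
\begin{equation*}
\Delta_x^{n,k}\leq \|(\bI-\bP_n)g\| + \Dt\sum_{j=0}^{k-1}\trinorm{\bN[t_j,u^j]-\bN^n[t_j,u^{n,j}]}_2 + \trinorm{(\bI-\bP_n)W(t_k)}_2,
\end{equation*}
where I used $W^{n,k}=\bP_n W(t_k)$. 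Splitting the summand exactly as in Lemma~\ref{l:convergence}---in particular invoking the observation \eqref{new-observation} so that the kernel-projection contribution is bounded by $A_S\|(\bI-\bP_n^{(1)})K\|_{L^2}$ at each step---and applying the discrete Gronwall inequality (with $\prod_{j=0}^{k-1}(1+L_N\Dt)\leq e^{L_N T}$ uniform in $\Dt$), one obtains
\begin{equation*}
\max_{k\leq M} \Delta_x^{n,k}\lesssim \|(\bI-\bP_n)g\| + \|(\bI-\bP_n^{(1)})K\|_{L^2(I^{2d})} + \trinorm{(\bI-\bP_n)W}_{2,T},
\end{equation*}
with a constant independent of $\Dt$. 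The right-hand side is $\lesssim \max\{n^{-\alpha},n^{-\beta},\Psi(n)\}$ by Lemma~\ref{l:LipBound1} and the estimate of $\Sigma$ from the proof of Theorem~\ref{t:rate}.

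For the temporal piece $\Delta_t^k$, I would subtract \eqref{e:Dtsum} from \eqref{e:exactsum}; the noise terms cancel exactly since $W^k = W(t_k)$, leaving
\begin{equation*}
\Delta_t^k \leq \sum_{j=0}^{k-1}\int_{t_j}^{t_{j+1}}\trinorm{\bN[s,u(s)]-\bN[t_j,u^j]}_2\,ds.
\end{equation*}
I would split the integrand through the intermediate states $\bN[s,u(t_j)]$ and $\bN[t_j,u(t_j)]$, giving (i) a Lipschitz-in-$u$ contribution bounded by $L_N\trinorm{u(s)-u(t_j)}_2\lesssim L_N\sqrt{\Dt}$ via Corollary~\ref{c:cont}, (ii) a Lipschitz-in-$t$ contribution bounded by $L_f(s-t_j)\leq L_f\Dt$---this is precisely where the time-Lipschitzness in \eqref{e:fLip} is needed---and (iii) a residual Lipschitz-in-$u$ contribution equal to $L_N\Delta_t^j$. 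Integration over $[t_j,t_{j+1}]$ and summation over $j<k$ give $\Delta_t^k\leq C\sqrt{\Dt}+L_N\Dt\sum_{j=0}^{k-1}\Delta_t^j$ upon using $k\Dt\leq T$; a discrete Gronwall argument then yields $\max_{k\leq M}\Delta_t^k\lesssim \sqrt{\Dt}$.

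The main obstacle is ensuring that the prefactors in the two estimates remain independent of the other discretization parameter, so that the spatial and temporal rates can be added via the triangle inequality without contaminating one another; this works because the discrete-Gronwall factor $e^{L_N T}$ depends only on $L_N$ and $T$. A secondary subtlety worth emphasizing is that in the spatial estimate the kernel-projection error enters only once per time step---not $(\Dt)^{-1}$ times---thanks to the identity \eqref{new-observation}, so that $\Dt\sum_{j<k}$ collapses to $T A_S\|(\bI-\bP_n^{(1)})K\|_{L^2}$ rather than blowing up. Combining the two bounds in \eqref{e:error2} and taking $\max_k$ gives the claim.
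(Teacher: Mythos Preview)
Your proposal is correct and follows essentially the same approach as the paper: the paper also uses the splitting \eqref{e:error2}, proves the spatial piece by a discrete-in-time version of Lemma~\ref{l:convergence} (this is the content of Corollary~\ref{c:semidiscreteconv}) and the temporal piece by a standard Euler--Maruyama argument invoking the time-regularity of Corollary~\ref{c:cont} and a discrete Gronwall inequality (this is Theorem~\ref{t:EM}/Corollary~\ref{c:EMkuramoto}). The only cosmetic difference is that the paper bundles the Lipschitz constants in $t$ and $u$ into the single condition \eqref{e:NLip2} rather than separating the $f$- and $\bK$-contributions as you do.
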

\begin{proof}
Using \eqref{e:error2} along with Corollary \ref{c:semidiscreteconv} and Corollary \ref{c:EMkuramoto}, we have our result.
\end{proof}
An improvement to this, with $\bigo(\Dt)$ error, for the particular case of \eqref{e:kuramoto}, where $S$ is a trigonometric function, is presented in Section \ref{s:milstein}

\subsection{Spatial error of the discrete in time problem}

As a corollary to Theorems \ref{t:rate}, we have
\begin{corollary}
\label{c:semidiscreteconv}
Under the assumptions of  Theorems \ref{t:rate}, fixing $\Dt_0 >0$, for all $\Dt \in (0, \Dt_0)$,
\begin{equation*}
    \max_{k}\vvvert u^k - u^{n,k}\vvvert_2\lesssim \max\{ n^{-\alpha}, n^{-\beta}, \Psi(n) \}.
\end{equation*}
\end{corollary}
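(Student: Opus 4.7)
The plan is to mirror the proof of Lemma~\ref{l:convergence} and Theorem~\ref{t:rate}, but at the discrete time level, applying a discrete Gronwall inequality in place of the continuous one. Concretely, I would subtract the discrete sum \eqref{e:kuramaotodiscretesum} from the time-discrete-but-space-continuous analogue \eqref{e:Dtsum}. This yields
\begin{equation*}
u^k - u^{n,k} = (g - \mathbf{P}_n g) + \sum_{j=0}^{k-1}\Dt\bigl[f(t_j,u^j) - f(t_j,u^{n,j})\bigr] + \sum_{j=0}^{k-1}\Dt\bigl[\bK[u^j] - \bK^n[u^{n,j}]\bigr] + \bigl(W(t_k) - W^{n,k}\bigr).
\end{equation*}
Taking $\trinorm{\cdot}_2$ and applying the triangle inequality then reduces the problem to bounding four contributions.

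The second step is to handle each contribution exactly as in the proofs of Lemmas~\ref{l:convergence} and \ref{l:LipBound1}. The initial data term is controlled by $\|(\mathbf{I}-\mathbf{P}_n)g\|\lesssim n^{-\alpha}$ using the hypothesis that $g \in \Lip(\alpha, L^2(I^d))$. The $f$-difference is bounded by $L_f \Delta_x^{n,j}$ using \eqref{e:fLip}. For the nonlocal term, I would split
\begin{equation*}
\bK[u^j] - \bK^n[u^{n,j}] = \bigl(\bK[u^j] - \bK[u^{n,j}]\bigr) + \bigl(\bK - \bK^n\bigr)[u^{n,j}],
\end{equation*}
the first piece being controlled by $\sqrt{2(K_1+K_2)}\,L_S\,\Delta_x^{n,j}$ as in \eqref{e:nonlocalerr}, and the second by $A_S \|(\mathbf{I}-\bP_n^{(1)})K\|_{L^2(I^d\times I^d)}\lesssim n^{-\beta}$, reusing identity \eqref{new-observation} (which only depends on $u^{n,j}\in\cH^n$) and $B_S=0$. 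Finally, since $W^{n,k} = \mathbf{P}_n W(t_k)$, the Wiener contribution at any $t_k\leq T$ is bounded by the same $\Psi(n)$ estimate from the proof of Theorem~\ref{t:rate}.

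Collecting these, I obtain the recursion
\begin{equation*}
\Delta_x^{n,k} \leq C_0\,\max\{n^{-\alpha}, n^{-\beta}, \Psi(n)\} + C_1 \sum_{j=0}^{k-1}\Dt\,\Delta_x^{n,j},
\end{equation*}
for constants $C_0, C_1$ depending on $T$, $L_f$, $L_S$, $K_1$, $K_2$, and $A_S$, but independent of both $n$ and $\Dt$. The discrete Gronwall inequality then gives $\max_{k\leq M} \Delta_x^{n,k} \leq C_0 e^{C_1 T}\max\{n^{-\alpha}, n^{-\beta}, \Psi(n)\}$, which is the claim. Uniformity over $\Dt\in(0,\Dt_0)$ is automatic because the total sum $\sum_{j=0}^{k-1}\Dt \leq T$ and the Gronwall exponential do not see $\Dt$.

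The routine but delicate point is verifying that \eqref{new-observation} still applies at each discrete time step — this is clean here because $u^{n,j}\in\cH^n$ by construction of the scheme \eqref{e:kuramotodiscrete}. The main obstacle, though minor, is just setting up the discrete Gronwall cleanly and ensuring that the $\Psi(n)$ estimate from Theorem~\ref{t:rate} carries over verbatim at a fixed time $t_k\leq T$ rather than in the supremum over a continuous interval; this is immediate from the identity $\trinorm{(\mathbf{I}-\mathbf{P}_n)W(t_k)}_2^2 = t_k\sum_{k}\lambda_k\|\bP_n^{\perp} e_k\|^2$ and the optimization over $m$ already carried out in \eqref{e:split}--\eqref{1st-sum-a}.
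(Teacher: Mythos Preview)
Your proposal is correct and follows essentially the same approach as the paper: subtract the two discrete-in-time representations, bound the $f$, nonlocal, initial data, and Wiener contributions exactly as in Lemma~\ref{l:convergence} and Theorem~\ref{t:rate}, arrive at a discrete recursion, and close with the discrete Gronwall inequality. The only cosmetic difference is that the paper bounds the kernel projection error by $A_S\|(\mathbf{I}-\mathbf{P}_n)K\|$ directly rather than by $A_S\|(\mathbf{I}-\mathbf{P}_n^{(1)})K\|$, but these are equivalent for the purpose of obtaining the $n^{-\beta}$ rate.
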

The implicit constant in the above error bound depends upon $\Dt_0$ but not $\Dt$, so the result is uniform for all $\Dt$ sufficiently small.

\begin{proof}
The proof is established by reformulating Lemma \ref{l:convergence} for discrete sums in place of time integrals.  We begin with the computation
\begin{equation*}
\begin{split}
\Delta_x^{n,k} &\equiv \vvvert u^k - u^{n,k}\vvvert_2 \leq \vvvert (\mathbf{I} - \mathbf{P}_n) g \vvvert + \Dt \sum_{j=0}^{k-1} \vvvert f(t_j,u^j) - f(t_j, u^{n,j})\vvvert_2  \\
&\quad + \Dt \sum_{j=0}^{k-1} \vvvert \mathbf{K}[u^j] - \mathbf{K}^n[u^{n,j}]\vvvert_2 + \vvvert (\mathbf{I} - \mathbf{P}_n) W(t_k) \vvvert_2
\end{split}
\end{equation*}
For the self interaction summand, by our assumptions on $f$,
\begin{equation*}
    \vvvert f(t_j,u^j) - f(t_j, u^{n,j})\vvvert_2 \leq L_f \vvvert u^j - u^{n,j} \vvvert_2 = L_f \Delta_x^{n,j}.
\end{equation*}
For the nonlocal summand, in the case that $K\in \Lip(\rev{\beta}, L^{2}(I^{2d}))$,
\begin{equation*}
\begin{split}
    \vvvert \mathbf{K}[u^j] - \mathbf{K}^n[u^{n,j}]\vvvert_2 &\leq L_k \Delta_x^{n,j} + A_S \|(\mathbf{I} - \mathbf{P}_n)K\|\\
    &\lesssim \Delta^{n,j}_x + n^{-\beta}.
\end{split}
\end{equation*}
Finally, as in the proof of Theorem \ref{t:rate}
\begin{equation*}
    \vvvert (\mathbf{I} - \mathbf{P}^n) W(t_k) \vvvert_2\lesssim \Psi(n).
\end{equation*}
\rev{Next, since
\begin{equation*}
     \Delta_{x}^{n,0} = \vvvert u^{0} - u^{n,0} \vvvert_2 = \| (\mathbf{I} - \mathbf{P}_n) g\| \lesssim n^{-\alpha},
\end{equation*}
and
\begin{equation*}
\begin{split}
    \Delta_x^{n,k} &\lesssim n^{-\alpha } + \Dt\sum_{j=0}^{k-1} \Delta_x^{n,j} + \Dt \sum_{j=0}^{k-1} n^{-\beta} +  \Psi(n)\\
    &\lesssim \max \{n^{-\alpha }, n^{-\beta}, \Psi(n)\}  +  \Dt\sum_{j=0}^{k-1} \Delta_x^{n,j},
\end{split}
\end{equation*}
we can apply apply a discrete Gronwall equality to obtain
\begin{equation*}
    \Delta_x^{n,k}\lesssim \max \{n^{-\alpha }, n^{-\beta}, \Psi(n)\}  e^{\Dt k}.
\end{equation*}
This completes the result.
}

\end{proof}

\subsection{Time stepping error}

To unify our analysis of the time stepping error, we return to the generic form \eqref{e:generic}, and compare
\begin{align}
    \label{e:uN1}
    u(t_k) &= g + \sum_{j=0}^{k-1}\int_{t_j}^{t_{j+1}} \mathbf{N}[s, u(s)]ds + W(t_k)\\
    \label{e:uN2}
    u^k & = u^0 + \sum_{j=0}^{k-1} \mathbf{N}[t_j, u^j]\Dt  + W^k
\end{align}
This amounts to the Euler-Maruyama discretization, which is known to have a strong order of convergence of $1/2$.  We will establish a convergence result for \eqref{e:uN2}, and then verify $f$, $K$, and $S$ in \eqref{e:kuramoto} satisfy the assumptions, as in the proof of Theorem \ref{thm:kuramoto}.  In place of \eqref{e:NLip}, we will need the stronger assumption
\begin{equation}
\label{e:NLip2}
    \|\mathbf{N}[t,u] - \mathbf{N}[s,v]\| \leq L_N(|t-s| + \|u-v\|).
\end{equation}

\begin{theorem}
\label{t:EM}
Under the assumptions of Theorem \ref{t:existence} and \eqref{e:NLip2}, \rev{for $\Dt>0$}, the time discretization error satisfies
\begin{equation*}
    \max_{k}\vvvert u(t_k) - u^k \vvvert_2 \lesssim \sqrt{\Dt}
\end{equation*}
\end{theorem}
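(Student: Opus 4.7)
The plan is to run the classical Euler-Maruyama argument, exploiting the fact that in the time-only discretization \eqref{e:uN2} the noise term is exact, $W^k = W(t_k)$, so when we form $u(t_k) - u^k$ the Wiener process contributions cancel. What remains is a pure drift-discretization error, and the $\sqrt{\Dt}$ rate will come from the Hölder-$1/2$ temporal regularity of the mild solution supplied by Corollary~\ref{c:cont}.

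First, I would subtract \eqref{e:uN2} from \eqref{e:uN1} (both start from $g=\xi$) to obtain
\begin{equation*}
  u(t_k) - u^k = \sum_{j=0}^{k-1}\int_{t_j}^{t_{j+1}} \left(\bN[s,u(s)] - \bN[t_j, u^j]\right) ds.
\end{equation*}
Then I would insert and subtract $\bN[t_j, u(t_j)]$ to split the integrand as
\begin{equation*}
  \left(\bN[s, u(s)] - \bN[t_j, u(t_j)]\right) + \left(\bN[t_j, u(t_j)] - \bN[t_j, u^j]\right).
\end{equation*}
Taking the $\trinorm{\cdot}_2$ norm and applying the joint Lipschitz assumption \eqref{e:NLip2}, the first summand is bounded by $L_N(|s - t_j| + \trinorm{u(s) - u(t_j)}_2)$, which Corollary~\ref{c:cont} controls by a constant multiple of $\Dt + \sqrt{s - t_j} = \bigo(\sqrt{\Dt})$. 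The second summand is at most $L_N \trinorm{u(t_j) - u^j}_2 = L_N \Delta_t^j$.

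Assembling these estimates and integrating in $s$ over each subinterval yields
\begin{equation*}
  \Delta_t^k \lesssim T\sqrt{\Dt} + L_N \Dt \sum_{j=0}^{k-1} \Delta_t^j,
\end{equation*}
and the discrete Gronwall inequality then produces $\Delta_t^k \lesssim \sqrt{\Dt}$ uniformly in $k \le M = T/\Dt$, which is the stated bound.

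The main rate-limiting step is the use of Corollary~\ref{c:cont}: the drift discretization by itself contributes only an $\bigo(\Dt)$ error, but once we measure $u(s) - u(t_j)$ through the mild-solution representation, which contains $W(s) - W(t_j)$, we cannot do better than $\sqrt{\Dt}$. This is consistent with the standard strong order $1/2$ for Euler-Maruyama, and it is precisely the place one would need to sharpen — by exploiting the additive-noise structure or an Itô-Taylor expansion — to reach the improved $\bigo(\Dt)$ rate claimed in Theorem~\ref{t:fulldiscrete2}.
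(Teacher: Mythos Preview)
Your proof is correct and is essentially the same as the paper's: both cancel the exact noise term, bound $\trinorm{\bN[s,u(s)]-\bN[t_j,u^j]}_2$ via the Lipschitz assumption \eqref{e:NLip2} together with the $\sqrt{|s-t_j|}$ continuity from Corollary~\ref{c:cont}, and close with the discrete Gronwall inequality. The only cosmetic difference is that you insert $\bN[t_j,u(t_j)]$ before applying Lipschitz, whereas the paper applies Lipschitz first and then inserts $u(t_j)$; the resulting estimates are identical.
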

An immediate consequence of this is the result for \rev{\eqref{e:kuramoto}},
\begin{corollary}
\label{c:EMkuramoto}
Under the assumptions of Theorem \ref{thm:kuramoto}, \rev{for $\Dt >0$}, the time discretization error satisfies
\begin{equation*}
    \max_{k}\vvvert u(t_k) - u^k \vvvert_2 \lesssim \sqrt{\Dt}
\end{equation*}
\end{corollary}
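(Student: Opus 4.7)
The plan is to verify that the hypotheses of Theorem \ref{t:EM} are satisfied when we take $\mathbf{N}[t,u] = f(t,u) + \mathbf{K}[u]$, as in \eqref{e:our_problem}, and then invoke that theorem directly. Most of the needed estimates were already obtained in the proof of Theorem \ref{thm:kuramoto}: the linear growth bound \eqref{e:NBound} follows from the triangle inequality together with \eqref{e:fBound} and \eqref{KBound}, and the purely spatial Lipschitz estimate (\eqref{e:NLip} with $t=s$) follows from \eqref{e:fLip} and \eqref{KLip}. The only genuinely new element required by Corollary \ref{c:EMkuramoto} is the joint Lipschitz-in-$(t,u)$ assumption \eqref{e:NLip2}.

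To establish \eqref{e:NLip2}, I would split
\begin{equation*}
\|\mathbf{N}[t,u]-\mathbf{N}[s,v]\|_{L^2(I^d)}\le \|f(t,u)-f(s,v)\|_{L^2(I^d)} + \|\mathbf{K}[u]-\mathbf{K}[v]\|_{L^2(I^d)}.
\end{equation*}
The second term has no time dependence and is bounded by $L_K\|u-v\|_{L^2(I^d)}$ via \eqref{KLip}. For the first term, apply the pointwise Lipschitz bound \eqref{e:fLip} in $x$, square, integrate over $I^d$, and use $|I^d|=1$ together with the elementary inequality $(a+b)^2\le 2a^2+2b^2$ to obtain
\begin{equation*}
\|f(t,u)-f(s,v)\|_{L^2(I^d)}\le \sqrt{2}\,L_f\bigl(|t-s|+\|u-v\|_{L^2(I^d)}\bigr).
\end{equation*}
Combining the two estimates yields \eqref{e:NLip2} with $L_N = \sqrt{2}\,L_f + L_K$ (or any convenient constant dominating both).

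With both \eqref{e:NBound} and \eqref{e:NLip2} verified for the nonlinearity associated with \eqref{e:kuramoto}, Theorem \ref{t:EM} applies and delivers the claimed $\mathcal{O}(\sqrt{\Dt})$ bound on $\max_k\trinorm{u(t_k)-u^k}_2$. There is essentially no obstacle beyond this bookkeeping; the only place one has to be mildly careful is converting the pointwise-in-$x$ Lipschitz condition on $f$ into the $L^2(I^d)$-norm Lipschitz condition required by Theorem \ref{t:EM}, which is where the normalization $|I^d|=1$ is used.
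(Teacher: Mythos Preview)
Your proposal is correct and follows exactly the route the paper intends: the corollary is stated there as an ``immediate consequence'' of Theorem~\ref{t:EM}, and you have simply made explicit the one detail the paper leaves implicit, namely the verification of the joint-in-$(t,u)$ Lipschitz bound \eqref{e:NLip2} for $\mathbf{N}[t,u]=f(t,u)+\mathbf{K}[u]$ using \eqref{e:fLip}, \eqref{KLip}, and $|I^d|=1$. Nothing further is required.
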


We include a proof of Theorem \ref{t:EM}, which is standard, for completeness.
\begin{proof}[Proof of Theorem \ref{t:EM}]
Letting  $\Delta_t^k = \vvvert u(t_k) - u^k\vvvert_2$, our first estimate is
\begin{equation*}
    \Delta_t^k \leq \sum_{j=0}^{k-1}\int_{t_j}^{t_{j+1}} \vvvert \mathbf{N}[s,u(s)] - \mathbf{N}[t_j, u^j] \vvvert_2 ds
\end{equation*}
By our assumptions and Theorem \ref{thm:kuramoto},
\begin{equation*}
\begin{split}
    \vvvert \mathbf{N}[s,u(s)] - \mathbf{N}[t_j, u^j] \vvvert_2   & \lesssim |s-t_j| + \vvvert u(s) - u^j\vvvert_2\\
    &\lesssim |s-t_j| + \vvvert u(s) - u(t_j)\vvvert_2 + \Delta^j_t\\
    &\lesssim |s-t_j| + \sqrt{|s-t_j|} + \Delta_t^j.
\end{split}
\end{equation*}
Consequently,
\begin{equation*}
\begin{split}
    \Delta_t^k &\lesssim \sum_{j=0}^{k-1} \int_{t_j}^{t_{j+1}} \sqrt{|s-t_j|} + |s-t_j| + \Delta_t^j\\
    &\lesssim \sum_{j=0}^{k-1} \Dt^{3/2} + \Dt^2 + \Dt \Delta_t^j\lesssim \sqrt{\Dt} + \Dt \sum_{j=0}^{k-1}  \Delta_t^j.
\end{split}
\end{equation*}
\rev{Since $u^0 = u(0)$, $\Delta_t^0 = 0$, by discrete Gronwall,
\begin{equation*}
     \Delta_t^k \lesssim \sqrt{\Dt} e^{k\Dt}.
\end{equation*}
This completes the result.  
}
\end{proof}

\subsection{Improved convergence estimates}
\label{s:milstein}

Higher order convergence in time can be achieved in certain special, but important cases.
This is a consequence of our problem having only additive noise and the interaction term in the
classical Kuramoto being a trigonometric function.  For additive noise, Euler-Maruyama is exactly
Milstein's method which has strong first order convergence, provided the drift term is sufficiently smooth,
\cite{lord2014an,kloeden2013numerical}.   We are able to prove:

\begin{theorem}
\label{t:fulldiscrete2}
Under the same assumptions as in Theorem \ref{t:rate}, if, in addition, \rev{$f=0$ and} $S(u,v) = \sin(2\pi(u-v))$, then
\begin{equation*}
    \max_k\vvvert u(t_k) - u^{n,k}\vvvert_2 \lesssim \max\{ n^{-\alpha}, n^{-\beta}, \Psi(n)\} + {\Delta t}
\end{equation*}
where $\Psi$ is defined in \eqref{e:Psin}.
\end{theorem}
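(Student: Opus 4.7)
My plan is to reuse the decomposition \eqref{e:error2} from the proof of Theorem~\ref{t:fulldiscrete}. The spatial contribution $\Delta_x^{n,k}$ is handled verbatim by Corollary~\ref{c:semidiscreteconv} and already meets the advertised rate uniformly in $\Dt$, so all the new work is in sharpening the temporal bound of Corollary~\ref{c:EMkuramoto} from $\sqrt{\Dt}$ to $\Dt$ under the extra hypotheses $f=0$ and $S(u,v)=\sin(2\pi(u-v))$.

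The mechanism is the classical observation that for an SDE with additive noise and a $C^2$ drift, Euler--Maruyama already coincides with Milstein and attains strong order one \cite{lord2014an,kloeden2013numerical}. Here the drift is $\bK$, and the trigonometric $S$ renders $\bK:\cH\to\cH$ Fr\'echet differentiable to all orders, with derivatives uniformly bounded in terms of $K_1, K_2$ and the $L^\infty$ bounds of $\sin,\cos$. Writing the per-step defect as
\begin{equation*}
\int_{t_j}^{t_{j+1}}\bigl(\bK[u(s)] - \bK[u(t_j)]\bigr)\,ds + \Dt\bigl(\bK[u(t_j)] - \bK[u^j]\bigr),
\end{equation*}
the second summand is $\lesssim \Dt\,\Delta_t^j$ by \eqref{KLip} and will feed the Gronwall loop. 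For the first summand I would apply It\^o's formula to $s\mapsto \bK[u(s)]$, replacing $\bK[u(s)] - \bK[u(t_j)]$ by an integrated drift of size $O(s-t_j)$ in $\trinorm{\bullet}_2$ (coming from $D\bK[u](\bK[u])$ and $\tfrac12\Tr(D^2\bK[u]\bQ)$) plus a mean-zero stochastic integral $\int_{t_j}^s D\bK[u(r)]\,dW(r)$.

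The drift piece integrates to $O(\Dt^2)$ per step, hence $O(\Dt)$ in total. The key improvement comes from the stochastic piece: stochastic Fubini converts each one-step term into $\int_{t_j}^{t_{j+1}}(t_{j+1}-r)\,D\bK[u(r)]\,dW(r)$, whose mean-square norm is $O(\Dt^3)$ by It\^o's isometry, owing to the extra $(t_{j+1}-r)^2$ factor. Orthogonality of martingale increments over disjoint subintervals then yields an aggregate mean-square of order $M\cdot\Dt^3 = \Dt^2$ across all $M = T/\Dt$ steps, i.e., an $L^2(\Omega;\cH)$ contribution of $O(\Dt)$ rather than the usual $O(\sqrt{\Dt})$. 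Discrete Gronwall assembles these pieces into $\max_k \Delta_t^k \lesssim \Dt$, and the claimed bound follows.

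The main technical obstacle is justifying the infinite-dimensional It\^o calculation: one needs $D\bK[u]$ to be Hilbert--Schmidt as an operator $\bQ^{1/2}\cH\to\cH$ so that the stochastic integral and its isometry are valid, and one needs $D^2\bK[u]$ to pair meaningfully with the trace-class $\bQ$ in the drift correction. Both rest on the kernel bounds \eqref{e:Kbounds} and the uniform boundedness of the derivatives of $\sin$, but writing out the kernels of $D\bK$ and $D^2\bK$ explicitly and verifying the required operator-norm estimates is where the substantive bookkeeping of the proof lives.
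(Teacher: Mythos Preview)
Your overall strategy matches the paper's: use the splitting \eqref{e:error2}, invoke Corollary~\ref{c:semidiscreteconv} for the spatial piece, and upgrade the temporal bound from $\sqrt{\Dt}$ to $\Dt$. That much is correct.

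The gap is in your proposed mechanism for the temporal upgrade. You assert that ``the trigonometric $S$ renders $\bK:\cH\to\cH$ Fr\'echet differentiable to all orders, with derivatives uniformly bounded,'' and you plan to run the It\^o-formula/Milstein argument via $D\bK$ and $D^2\bK$. The paper explicitly identifies this as the approach that \emph{fails} here. The formal second variation of $\bK$ contains the pointwise product $(\delta u(x))^2$ (see the discussion preceding Proposition~\ref{p:Milstein}), so controlling it in $L^2(I^d)$ requires $\delta u\in L^4(I^d)$, not merely $L^2(I^d)$. Since solutions live only in $\cH=L^2(I^d)$ and the flow provides no smoothing, you cannot justify $D^2\bK[u]$ as a bounded bilinear form on $\cH\times\cH$, and the It\^o correction term $\tfrac12\Tr(D^2\bK[u]\bQ)$ is not well-defined in the generality of the theorem. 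What you flag as ``substantive bookkeeping'' is in fact an obstruction.

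The paper circumvents this by abandoning variational derivatives altogether. Instead, Proposition~\ref{p:Milstein} isolates an abstract sufficient condition: a decomposition $\bK[u(s)]-\bK[u(t_j)]=a_j(s)+\beta_j(s)$ with $\trinorm{a_j(s)}_2\lesssim s-t_j$ and $\beta_j$ of size $\sqrt{s-t_j}$ but conditionally mean-zero, so that the cross terms vanish and the martingale-type summation still yields $\Dt^2$ in mean square. For the specific $S(u,v)=\sin(2\pi(u-v))$, this decomposition is produced in Proposition~\ref{p:MilsteinKuramoto} by repeated use of the angle-addition formulas for $\sin$ and $\cos$, splitting off the Wiener increment $\Delta_jW$ explicitly; the mean-zero property of $\beta_j$ then reduces to $\E[\sin(2\pi\Delta_jW)]=0$ (Lemma~\ref{l:EsinY}). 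No second Fr\'echet derivative is ever invoked, and all estimates stay in $L^2$.
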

\begin{proof}
This follows from \eqref{e:error2}, the previously stated Corollary \ref{c:semidiscreteconv}, and Corollary \ref{c:KuramotoMilstein}, which is presented below.
\end{proof}

This result is rather specialized to the $\sin$ function, though it can be generalized to other such trigonometric functions and their linear combinations.   However, it reveals a fundamental challenge to studying \eqref{e:kuramoto} owing to the lack of smoothing.

For equations with additive noise, to obtain the higher order in time result,  one typically assumes at most linear bounds with respect to $u$ on the first and second variations of $\mathbf{N}[t,\bullet]:\rev{\mathcal{H}\to \mathcal{H}}$, as in \cite{lord2014an,Lord.2018,Wang.2015}.  That is to say, it is assumed 
$$\|D\mathbf{N}[t,u]\|_{\rev{\mathcal{H}\to \mathcal{H}}}\lesssim \|u\|.
$$  
The higher order convergence result is then obtained by performing a Taylor expansion in the nonlinearity, using such assumed bounds on the variational derivatives.  Here, there is an obstacle in even defining the variational derivatives. Consider the case of
\begin{equation*}
   \mathbf{N}[t,u](x) =  \int K(x,y) \sin(u(x) - u(y))dy
\end{equation*}
By Taylor's theorem with remainder, 
\begin{equation*}
    \sin(u + \delta u)= \sin(u) + \cos(u)\delta u -\int_0^1 (1-\lambda) \sin(u + \lambda \delta u) \delta u^2 d\lambda.
\end{equation*}
Consequently,
\begin{equation*}
\begin{split}
    &\mathbf{N}[t,u + \delta u] = \mathbf{N}[t,u] + \int K(x,y) \cos( u(x) - u(y)) (\delta u(x) - \delta u(y))dy\\
    &\quad - \int K(x,y) \int_0^1 (1-\lambda) \sin(u(x) - u(y) + \lambda (\delta u (x) - \delta u(y)))(\delta u(x) - \delta u(y))^2d\lambda
\end{split}
\end{equation*}
To justify that the first variational derivative is
\begin{equation*}
 D\mathbf{N}[u]\delta u =    \int K(x,y) \cos( u(x) - u(y)) (\delta u(x) - \delta u(y))dy
\end{equation*}
we would need to show that the quadratic  term is $\littleo(\|\delta u\|_{L^2}^2)$.  But the second order term in the expansion includes expressions like
\begin{equation*}
    \set{\int K(x,y)\int_0^1 (1-\lambda) \sin(u(x) - u(y) + \lambda (\delta u (x) - \delta u(y))) dy \rev{d\lambda}}(\delta u(x))^2.
\end{equation*}
This necessitates \rev{$\delta u \in L^4(I^d)$, but our solutions, in the spatial variable, are only in $\mathcal{H}=L^2(I^d)$.} Thus, the standard approach, via variational derivatives will not work here.

A sufficient condition on the nonlinearity to obtain the Milstein rate of convergence is the following:
\begin{proposition}
\label{p:Milstein}
Under the same assumptions of Theorem \ref{t:existence} and \eqref{e:NLip2}, assume, also,
that for any partition $0=t_0<t_1<\ldots t_{M}=T$, there exists a constant $C$, uniform over the partition, and $\mathcal{H}$ valued functions $a_j$ and $\beta_j$ such that for $s \in [t_j, t_{j+1}]$
\begin{gather*}
\mathbf{N}[t_j, u(s)] - \mathbf{N}[t_j, u(t_j)] = a_j(s) + \beta_j(s),\\
    \trinorm{a_j(s)}_2 \leq C (s-t_j),\\
    \trinorm{\beta_j(s)}_2 \leq C \sqrt{s-t_j}, \quad \E[\beta_j(s)\mid \mathcal{F}_{t_j}] = 0.
\end{gather*}
\rev{Then for all $\Dt>0$,}
\begin{equation*}
    \max_k \vvvert u(t_k) - u^k\vvvert_2 \lesssim \Dt.
\end{equation*}
\end{proposition}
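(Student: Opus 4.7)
My plan is to work with the squared mean square error recursively, rather than with the $L^2$ norm directly, because only then can I exploit the conditional zero-mean property of $\beta_j(s)$.

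First, let $E_k = u(t_k) - u^k$. Subtracting \eqref{e:uN2} from \eqref{e:uN1} and isolating the last time step yields
\begin{equation*}
  E_{k+1} = E_k + \int_{t_k}^{t_{k+1}}\bigl\{\mathbf{N}[s,u(s)] - \mathbf{N}[t_k, u^k]\bigr\}ds \equiv E_k + I_k.
\end{equation*}
I would then decompose the integrand into three natural pieces: $R_k(s) := \mathbf{N}[s,u(s)] - \mathbf{N}[t_k, u(s)]$ (time regularity), $a_k(s) + \beta_k(s)$ (the stated splitting applied to $u(s)$ versus $u(t_k)$), and $D_k := \mathbf{N}[t_k,u(t_k)] - \mathbf{N}[t_k, u^k]$ (spatial Lipschitz applied to $E_k$). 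By \eqref{e:NLip2} and Corollary~\ref{c:cont}, $\trinorm{R_k(s)}_2\lesssim s-t_k$, and $\trinorm{D_k}_2\lesssim \trinorm{E_k}_2$. Write $I_k = M_k + N_k$ where $M_k = \int_{t_k}^{t_{k+1}}\beta_k(s)\,ds$ and $N_k = \int_{t_k}^{t_{k+1}}\{R_k(s) + a_k(s)\}ds + \Delta t\, D_k$.

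Next I square and take expectation:
\begin{equation*}
  \E\|E_{k+1}\|^2 = \E\|E_k\|^2 + 2\E\langle E_k, I_k\rangle + \E\|I_k\|^2.
\end{equation*}
The key step is the cross term. Since $E_k$ is $\mathcal{F}_{t_k}$-measurable and $\E[\beta_k(s)\mid \mathcal{F}_{t_k}] = 0$, the tower property gives
\begin{equation*}
  \E\langle E_k, M_k\rangle = \int_{t_k}^{t_{k+1}}\E\langle E_k, \E[\beta_k(s)\mid \mathcal{F}_{t_k}]\rangle\,ds = 0,
\end{equation*}
so only $\E\langle E_k, N_k\rangle$ survives. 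Using Cauchy--Schwarz and the bounds $\trinorm{N_k}_2 \lesssim \Delta t^2 + \Delta t\,\trinorm{E_k}_2$, followed by Young's inequality, this cross term is bounded by $C\Delta t\,\trinorm{E_k}_2^2 + C\Delta t^3$. For $\E\|I_k\|^2$ I would apply Jensen's inequality $\|\int_{t_k}^{t_{k+1}}f(s)ds\|^2 \leq \Delta t \int_{t_k}^{t_{k+1}}\|f(s)\|^2 ds$ and use the pointwise bound $\E\|\mathbf{N}[s,u(s)] - \mathbf{N}[t_k,u^k]\|^2 \lesssim (s-t_k) + \trinorm{E_k}_2^2$ (here the $\sqrt{s-t_k}$ bound on $\beta_k$ contributes $s-t_k$ to the square), yielding $\E\|I_k\|^2 \lesssim \Delta t^3 + \Delta t^2\,\trinorm{E_k}_2^2$.

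Combining everything produces the clean one-step recursion
\begin{equation*}
  \trinorm{E_{k+1}}_2^2 \leq (1 + C\Delta t)\trinorm{E_k}_2^2 + C\Delta t^3,
\end{equation*}
and since $E_0 = 0$, the discrete Gronwall inequality gives $\trinorm{E_k}_2^2 \leq C\,k\Delta t^3\, e^{Ck\Delta t} \leq CT e^{CT}\Delta t^2$, uniformly in $k\leq M$. Taking square roots and the max over $k$ completes the proof. The main obstacle is clearly the cross-term analysis: if one worked directly with the triangle inequality on $\trinorm{E_k}_2$ instead of $\trinorm{E_k}_2^2$, the martingale cancellation is lost and one recovers only the standard $\sqrt{\Delta t}$ rate from Theorem~\ref{t:EM}. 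The hypothesized decomposition $a_j + \beta_j$ is precisely engineered so that the second-order piece $\beta_j$ is martingale-like and contributes only through $\E\|M_k\|^2 = O(\Delta t^2)$ rather than through the cross term.
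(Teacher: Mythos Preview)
Your argument is correct, but it proceeds along a different axis from the paper's proof. You set up a one-step recursion for $\trinorm{E_k}_2^2$ and exploit the orthogonality $\E\langle E_k, M_k\rangle = 0$ between the \emph{past error} and the current martingale increment. The paper instead works with $\Delta_t^k = \trinorm{E_k}_2$ (unsquared) throughout, writes the error as a full telescoped sum $\sum_{j=0}^{k-1}\int_{t_j}^{t_{j+1}}(\cdots)$, and applies the triangle inequality to everything \emph{except} the block $\sum_j \int_{t_j}^{t_{j+1}}\beta_j(s)\,ds$. That block is then squared as a whole, and the cancellation used is $\E\bigl\langle \int_{t_i}^{t_{i+1}}\beta_i,\int_{t_j}^{t_{j+1}}\beta_j\bigr\rangle = 0$ for $i<j$ by conditioning on $\mathcal{F}_{t_j}$---orthogonality \emph{across} time steps rather than between error and increment. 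Your closing remark that working with $\trinorm{E_k}_2$ instead of $\trinorm{E_k}_2^2$ would lose the cancellation is therefore not quite right: the paper does work with the norm directly, and what matters is only that the $\beta$-sum not be broken up by the triangle inequality before squaring. Both routes are standard and yield the same constants up to the Gronwall step; yours is arguably cleaner since it avoids handling the off-diagonal cross terms explicitly, while the paper's avoids the Young-inequality bookkeeping on the $\langle E_k, N_k\rangle$ cross term.
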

This  avoids the need to directly manage the problematic variational derivatives of the drift term.

\begin{corollary}
\label{c:KuramotoMilstein}
For \eqref{e:kuramoto}, in the case that $f=0$ and \rev{$S(u,v) = \sin(2\pi(u-v))$ the} assumptions of Theorems \ref{t:existence} and \ref{p:Milstein} are satisfied with $\mathbf{N}[t, u(t)] = \mathbf{K}[u(t)]$.  For this model, we have $\bigo(\Dt)$ convergence under an Euler-Maruyama discretization.
\end{corollary}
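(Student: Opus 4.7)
The plan is to verify the hypotheses of Proposition \ref{p:Milstein} for $\mathbf{N}[t,u]=\mathbf{K}[u]$. Since $f=0$ and $\mathbf{K}$ has no explicit $t$-dependence, condition \eqref{e:NLip2} reduces to the Lipschitz bound \eqref{KLip} established in the proof of Theorem \ref{thm:kuramoto}. The new structural ingredient, coming from $|\sin|\le 1$, is a pointwise bound $|\mathbf{K}[u](x)|\le\int|K(x,y)|dy\le\sqrt{K_1}$, which will control the remainder below.

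For the decomposition, I would write $\Delta u(s):=u(s)-u(t_j)=I_j(s)+\Delta W_j(s)$ with $I_j(s):=\int_{t_j}^{s}\mathbf{K}[u(r)]dr$ and $\Delta W_j(s):=W(s)-W(t_j)$. The uniform bound gives $\|I_j(s)\|_{L^\infty}\le\sqrt{K_1}(s-t_j)$ a.s., while $\trinorm{\Delta W_j(s)}_2^2=(s-t_j)\Tr\mathbf{Q}$ and $\E[\Delta W_j(s)\mid\mathcal{F}_{t_j}]=0$. Using the angle-addition identity with $A(x,y)=2\pi(u(t_j,x)-u(t_j,y))$ and $B(x,y)=2\pi(\Delta u(x)-\Delta u(y))$,
\[
\sin(A+B)-\sin A=\cos(A)B+\bigl[\sin(A)(\cos B-1)+\cos(A)(\sin B-B)\bigr],
\]
and isolating the linear-in-$\Delta W_j$ contribution, I would set
\[
\beta_j(s)(x):=2\pi\int K(x,y)\cos(A)\bigl(\Delta W_j(s,x)-\Delta W_j(s,y)\bigr)dy,
\]
and let $a_j(s)$ collect the linear-in-$I_j$ piece together with the full Taylor remainder. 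Cauchy--Schwarz in $y$ with $|\cos|\le 1$ and \eqref{e:Kbounds} yields $\trinorm{\beta_j}_2\lesssim\sqrt{(s-t_j)\Tr\mathbf{Q}}$, while the conditional mean-zero property is immediate from independence of $\Delta W_j$ from $\mathcal{F}_{t_j}$. The linear-in-$I_j$ part of $a_j$ is handled identically and contributes $O(s-t_j)$.

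The main obstacle is the Taylor remainder, pointwise bounded by $C\int|K(x,y)|(\Delta u(x)-\Delta u(y))^2 dy$. Expanding $\Delta u=I_j+\Delta W_j$ splits it into three pieces. The $I_j^2$ piece is bounded pointwise by $CK_1^{3/2}(s-t_j)^2$ using $\|I_j\|_{L^\infty}$; the $I_j\,\Delta W_j$ cross piece gives $O((s-t_j)^{3/2})$ in $\trinorm{\cdot}_2$ via Cauchy--Schwarz and the uniform bound on $I_j$. The delicate piece is the pure $(\Delta W_j)^2$ contribution: Cauchy--Schwarz reduces its $\trinorm{\cdot}_2^2$ to a constant multiple of $\E\|\Delta W_j(s)\|_{L^4(I^d)}^4$, and the desired bound $(s-t_j)^2$ then amounts, via Isserlis/Gaussian hypercontractivity, to the assumption $\int q(x,x)^2 dx<\infty$ with $q(x,x)=\sum_k\lambda_k|e_k(x)|^2$. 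This is a mild regularity assumption on $\mathbf{Q}$ beyond trace class, automatic when the eigenfunctions are uniformly bounded in $L^\infty(I^d)$---as in Example \ref{ex:Rate}. Once this is in hand, $\trinorm{a_j(s)}_2\lesssim s-t_j$, the decomposition hypothesis of Proposition \ref{p:Milstein} is satisfied, and the $O(\Delta t)$ rate follows.
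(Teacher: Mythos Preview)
Your approach is correct, but it is organized differently from the paper's, and the comparison is instructive.

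The paper (Proposition~\ref{p:MilsteinKuramoto}) does not Taylor-expand $\sin$ about $A=2\pi\delta_{xy}u(t_j)$.  Instead it iterates the angle-addition identity: first split $\sin(A+B)-\sin A$ into $\sin A[\cos B-1]+\cos A\,\sin B$, then split $B=2\pi\Delta_j\delta_{xy}u(s)$ into its drift and noise parts $\Delta_j\delta_{xy}F(s)+\Delta_j\delta_{xy}W(s)$ and apply the addition formula again, and finally expand $\sin(2\pi\Delta_j\delta_{xy}W)$ into its $x$- and $y$-pieces.  The resulting $\beta_j$ is
\[
\beta_j(s)(x)=\int K(x,y)\,\cos\bigl(2\pi\delta_{xy}u(t_j)\bigr)\bigl\{\sin(2\pi\Delta_jW(x,s))-\sin(2\pi\Delta_jW(y,s))\bigr\}\,dy,
\]
not the linear-in-$\Delta_jW$ term you take.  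The mean-zero property then requires the separate Lemma~\ref{l:EsinY} ($\E[\sin Y]=0$ for Hilbert-valued Gaussians), whereas your linear choice makes it immediate.  Everything else goes into $a_j$, which the paper splits into five bounded-coefficient pieces $A_j^{(1)},\ldots,A_j^{(5)}$ and controls using $|\sin z|\le|z|$ and $|\cos z-1|\le z^2/2$ together with the moment bound $\trinorm{u(s)-u(t_j)}_4^2\lesssim s-t_j$ from Corollary~\ref{c:cont}.

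On the regularity issue you flag: both routes ultimately need to bound $\E\bigl[\int_{I^d}|\Delta_jW(y,s)|^4\,dy\bigr]$ by a constant times $(s-t_j)^2$.  Your condition $\int q(x,x)^2\,dx<\infty$ is exactly what delivers this.  The paper's proof reaches the same expression inside $A_j^{(1)}$, $A_j^{(4)}$, and $A_j^{(5)}$ and then cites Corollary~\ref{c:cont} with $p=4$; note, however, that Corollary~\ref{c:cont} controls $\E[\|\cdot\|_{L^2}^4]$, so the passage from $\E[\|\Delta_jW\|_{L^4(I^d)}^4]$ to $(s-t_j)^2$ implicitly uses the same hypothesis you state.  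Your write-up is therefore the more transparent one on this point; the paper's trig-identity decomposition, on the other hand, keeps every coefficient bounded by~$1$, which slightly streamlines the bookkeeping of the five remainder terms and avoids tracking the mixed $I_j\Delta W_j$ cross terms separately.
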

\begin{proof}[\rev{Proof of Corollary \ref{c:KuramotoMilstein}}]
This follows from Proposition \ref{p:Milstein}, once the conditions are verified on the nonlinearity. This is a somewhat technical proof which we omit from the main text.  See Proposition \ref{p:MilsteinKuramoto} in the appendix for the full details.
\end{proof}

\begin{proof}[Proof of Proposition \ref{p:Milstein}]
\begin{enumerate}[label=\arabic*., leftmargin=\parindent]
\item As in the case of proof of Theorem \ref{t:EM}, we begin with
\begin{equation*}
    \begin{split}
        \Delta_t^k &= \trinorm{\sum_{j=0}^{k-1} \int_{t_j}^{t_{j+1}}\mathbf{N}[s,u(s)] - \mathbf{N}[t_j, u^j]ds }_2\\
        &\leq \underbrace{\trinorm{\sum_{j=0}^{k-1} \int_{t_j}^{t_{j+1}}\mathbf{N}[s,u(s)] - \mathbf{N}[t_j, u(s)]ds }_2}_{I} + \underbrace{\trinorm{\sum_{j=0}^{k-1}\int_{t_j}^{t_{j+1}} \mathbf{N}[t_j,u(s)] - \mathbf{N}[t_j, u(t_j)]ds }_2}_{II}\\
        &\quad + \underbrace{\trinorm{\sum_{j=0}^{k-1}\int_{t_j}^{t_{j+1}} \mathbf{N}[t_j,u(t_j)] - \mathbf{N}[t_j, u^j]ds }_2}_{III}
    \end{split}
\end{equation*}
Each of the three terms will be treated separately.
\item First, \rev{by \eqref{e:NLip2},}
\begin{equation*}
\begin{split}
     I&\leq \sum_{j=0}^{k-1}\int_{t_j}^{t_{j+1}} \trinorm{\mathbf{N}[s,u(s)] - \mathbf{N}[t_j, u(s)]}_2ds \leq \sum_{j=0}^{k-1}\int_{t_j}^{t_{j+1}} L_N (s-t_j) ds \\
     &\lesssim \sum_{j=0}^{k-1}\Dt^2 \lesssim \Dt
\end{split}
\end{equation*}
\item Next, 
\begin{equation*}
    \begin{split}
        III&\leq \sum_{j=0}^{k-1}\int_{t_j}^{t_{j+1}} \trinorm{\mathbf{N}[t_j,u(t_j)] - \mathbf{N}[t_j, u^j]}_2ds\\
        &\leq \sum_{j=0}^{k-1}\int_{t_j}^{t_{j+1}} L_N\trinorm{u(t_j)-u^j}_2ds \lesssim \Dt \sum_{j=0}^{k-1} \Delta_t^j 
    \end{split}
\end{equation*}
\item Finally,
\begin{equation*}
    \begin{split}
        II&\leq \sum_{j=0}^{k-1}\int_{t_j}^{t_{j+1}} \trinorm{a_j(s)}_2ds  +  \trinorm{\sum_{j=0}^{k-1}\int_{t_j}^{t_{j+1}} \beta_j(s) ds }_2
    \end{split}
\end{equation*}
By our assumptions,
\begin{equation*}
     \sum_{j=0}^{k-1}\int_{t_j}^{t_{j+1}} \trinorm{a_j(s)}_2ds\leq C  \sum_{j=0}^{k-1}\int_{t_j}^{t_{j+1}} (s-t_j)ds \lesssim \Dt
\end{equation*}
For the other term, 
\begin{equation*}
\begin{split}
   \trinorm{\sum_{j=0}^{k-1}\int_{t_j}^{t_{j+1}} \beta_j(s) ds }_2^2 &= \sum_{j=0}^{k-1} \trinorm{\int_{t_j}^{t_{j+1}} \beta_j(s) ds}_2^2 \\
   &\quad + 2 \sum_{i<j}\E\bracket{\inner{\int_{t_i}^{t_{i+1}} \beta_{\rev{i}}(s) ds}{\int_{t_j}^{t_{j+1}} \beta_j(s) ds}}
\end{split}
\end{equation*}
Conditioning on $\mathcal{F}_{t_j}$,
\begin{equation*}
\begin{split}
    &\E\bracket{\inner{\int_{t_i}^{t_{i+1}} \beta_{\rev{i}}(s) ds}{\int_{t_j}^{t_{j+1}} \beta_j(s) ds}}\\
    &= \E\bracket{\E\bracket{\inner{\int_{t_i}^{t_{i+1}} \beta_{\rev{i}}(s) ds}{\int_{t_j}^{t_{j+1}} \beta_j(s) ds}\mid\mathcal{F}_{t_j} }}\\
    &= \E\bracket{\inner{\int_{t_i}^{t_{i+1}} \beta_{\rev{i}}(s) ds}{\int_{t_j}^{t_{j+1}} \E[\beta_j(s)\mid\mathcal{F}_{t_j} ] ds}} = 0
\end{split}
\end{equation*}
For the remaining  terms, applying Jensen and our assumption,
\begin{equation*}
    \begin{split}
    \sum_{j=0}^{k-1} \trinorm{\int_{t_j}^{t_{j+1}} \beta_j(s) ds}_2^2 &\leq \sum_{j=0}^{k-1} \Dt\int_{t_j}^{t_{j+1}}\trinorm{ \beta_j(s) }_2^2ds\\
    &\lesssim \sum_{j=0}^{k-1} \Dt\int_{t_j}^{t_{j+1}} (s-t_j) ds\\
    &\lesssim \sum_{j=0}^{k-1} \Dt^3 \lesssim \Dt^2
    \end{split}
\end{equation*}
\item Combining all of our estimates on $I$, $II$, and $III$,
\begin{equation*}
    \Delta_t^k \lesssim \Dt + \Dt \sum_{j=0}^{k-1} \Delta_t^j.
\end{equation*}
\rev{As $\Delta_t^0 = 0$, by the discrete Gronwall inequality, 
\begin{equation*}
    \Delta_t^k \lesssim \Delta t e^{k \Dt},
\end{equation*}
completing the proof}


\end{enumerate}
\end{proof}

\section{Numerical examples}
\label{s:numerics}

In this section we present numerical experiments to demonstrate our convergence results.  While our time stepping error, from Theorem \ref{c:KuramotoMilstein}, appears to be sharp, there appears to be opportunity to refine the spatial error given in Theorems \ref{c:semidiscreteconv} and \ref{t:fulldiscrete2}.

As a test problem, we consider  the problem in $d=1$
\begin{equation}
    du = \int K(x,y) \sin(2\pi(u(x)-u(y))) dy dt + dW
\end{equation}
and
\begin{subequations}
\label{e:Kexample}
\begin{align}
    A_r & = \set{(x,y) \in [0,1]^2\mid \min\{|x-y|, 1- |x-y|\}<r}\\
    K(x,y)& =  1_{A_r}(x,y)
\end{align}
\end{subequations}

As an initial condition, we take
\begin{equation}
    u_0(x) = x(1-x).
\end{equation}
The stochastic process $W$ has $\mathbf{Q} = (-d^2/dx^2)^{-s/2}$ with periodic boundary conditions.  The parameter $s>1$ ensures that $\bQ$ is trace class on $\mathcal{H}$.

For such a process, since the initial condition is continuous, we can take $\alpha=1$ (where $\alpha$ is given in Theorem \ref{t:rate}).  For a piecewise constant interaction kernel function, $\beta=1/2$ (see \cite{KVMed20}).  Lastly, for $\mathbf{Q}$, since the eigenfunctions $e_k$ are trigonometric functions, as in the case of Example \ref{ex:Rate}, we will have that $\omega_2(e_k, n^{-1}) \lesssim k/n$, and the eigenvalues scale as $\lambda_k \sim k^{-s}$ with $s>1$.  Then, as in Example \ref{ex:Rate} allows us to conclude that
\begin{equation}
\begin{split}
    \Psi(n)& \lesssim \sqrt{\inf_m \set{\rev{\frac{1}{n^2}\sum_{k=1}^m k^{2-s} + \sum_{k=m+1}^\infty k^{-s} }}}\\
    &\lesssim \sqrt{ \inf_m \set{\rev{\frac{1}{n^2}\sum_{k=1}^m k^{2-s} + m^{1-s}}}}
\end{split}
\end{equation}
\rev{For $s>1$ and $s \neq 3$, $\Psi(n)\lesssim n^{-(s-1)/2}$.  For $s=3$, $\Psi(n)\lesssim n^{-1}\sqrt{\log n}$.}  Therefore, looking at the mean square error, Theorem \ref{t:fulldiscrete2} predicts 
\begin{equation}
\label{e:mse}
    \text{MSE}\lesssim n^{-2} + n^{-1}+ \Dt^2 +\rev{\begin{cases}
    n^{-(s-1)} & s>1, \; s\neq 3\\
    n^{-2}\log n & s=3
    \end{cases}}
\end{equation}
At first glance, it would appear that for $s>2$, the contribution to the spatial error is dominated by the contribution from the nonlocal term, $n^{-1}$, while for $s< 2$, the spatial error is dominated by the noise term, $n^{-(s-1)}$.  In fact, our numerical experiments will reveal that the contribution to the MSE from the nonlocal term is actually $O(n^{-2})$, and, instead, the noise term dominates for $s<3$.

\subsection{Results and details of computation}

As we do not have access to an analytic solution, we make use a high resolution solution with $n = n_\star$ large, as a surrogate to see convergence in $n$.  Indeed, at a fixed $\Dt$ by Corollary \ref{c:semidiscreteconv}, since
\begin{equation*}
\begin{split}
    \max_k \vvvert{u^k - u^{n,k}}\vvvert_2&\leq \max_k \vvvert{u^k - u^{n_\star,k}}\vvvert_2 + \max_k \vvvert{u^{n_\star,k} - u^{n,k}}\vvvert_2\\
    &\lesssim  \max_k \vvvert{u^{n_\star,k} - u^{n,k}}\vvvert_2
\end{split}
\end{equation*}
provided we take $n_\star$ large enough.  Analogously, at a fixed $n$, by taking $\Dt_\star$ small enough, we compare against $\Dt$
\begin{equation*}
\begin{split}
    \vvvert u^{n,M} - u^n(T)\vvvert_2&\leq   \vvvert u^{n,M} - u^{n, M_{\star}}\vvvert_2 + \vvvert u^{n, M_{\star}} - u^n(T)\vvvert_2\\
    &\lesssim\vvvert u^{n,M} - u^{n, M_{\star}}\vvvert_2,
\end{split}
\end{equation*}
where $M_{\star} = T/\Dt_\star$.

In each case, we perform $10^2$ independent trials.  To see the convergence in $n$, we fix $\Dt=0.001$ and vary $n$, along with $s$.  To see the convergence in $\Dt$, we fix $n=1024$ and vary $\Dt$, along with $s$.    The random process is sampled by FFT methods.  When assessing the convergence in $\Dt$, it is sampled on $n=1024$ points.  For convergence in $n$ at fixed $\Dt$,  we sample the process on $2^{14}$ mesh points, and project it onto the lower resolution in $n$  spaces by Riemann sum approximation.  As this is higher resolution than the values of $n$ at which we compare, the Riemann approximation error is higher order.  The discretized interaction kernel, ${\bf P}_n K$, is computed using \rev{Gauss-Kronrod} quadrature, and, in assessing the $L^2(I^d)$ error, \rev{Gauss-Kronrod} is also used to compare the piecewise constant approximations at across resolutions.

The spatial results appear in Figure \ref{fig:nconv1}.  For \rev{$s < 3$}, the squared stochastic error, $\propto n^{-(s-1)}$ dominates. For $s>3$, it is   dominated by an error,  $\propto n^{-2}$.  It was predicted that the squared nonlocal discretization error, $\propto n^{-1}$ would dominate for $s \geq 2$.  We explain this discrepancy below, but, briefly, it is due to the square of the nonlocal integral error actually being $\propto n^{-2}$ for this $K(x,y)$.

\begin{figure}
    \centering
    \subfigure[]{\includegraphics[width=6.25cm]{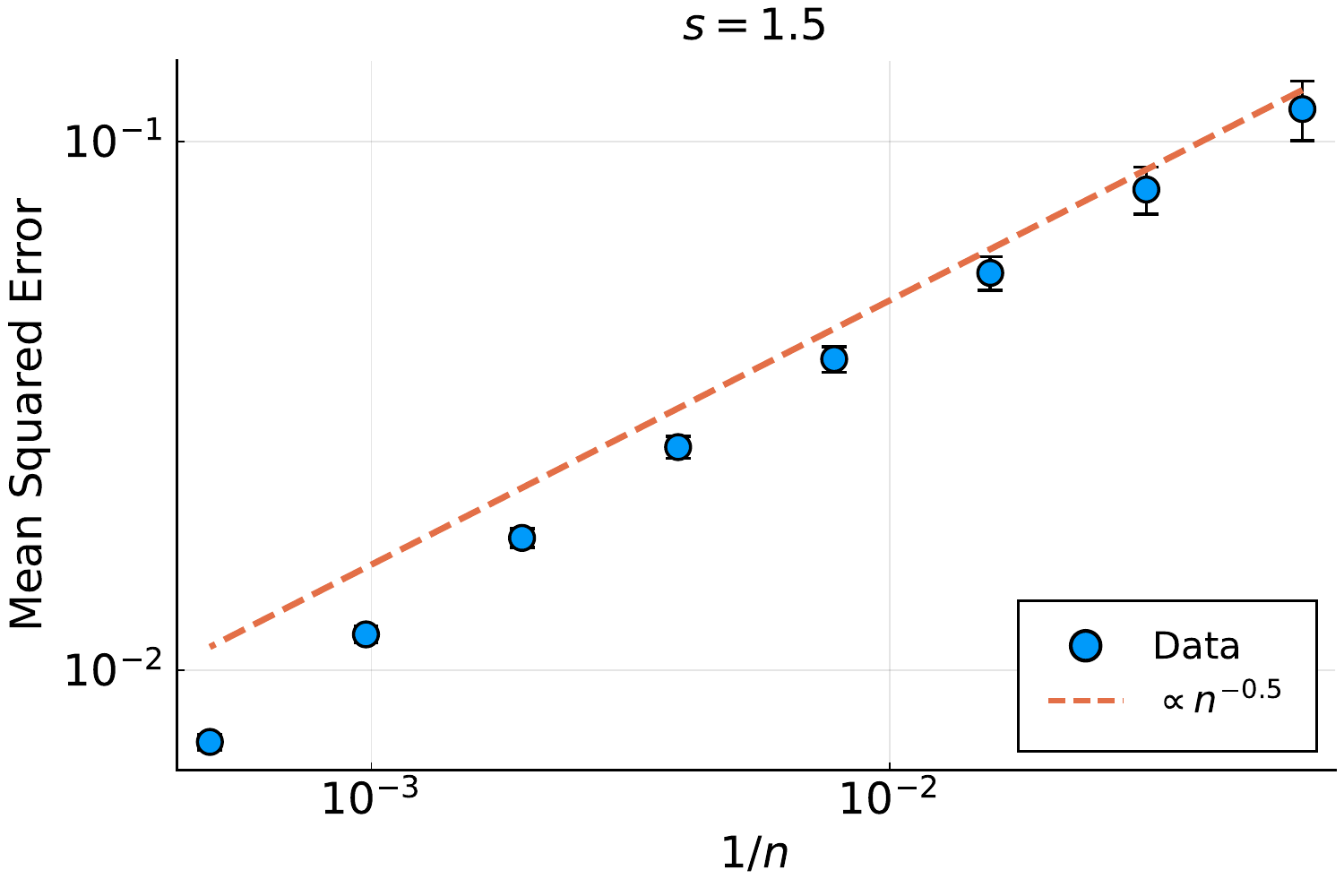}}
    \subfigure[]{\includegraphics[width=6.25cm]{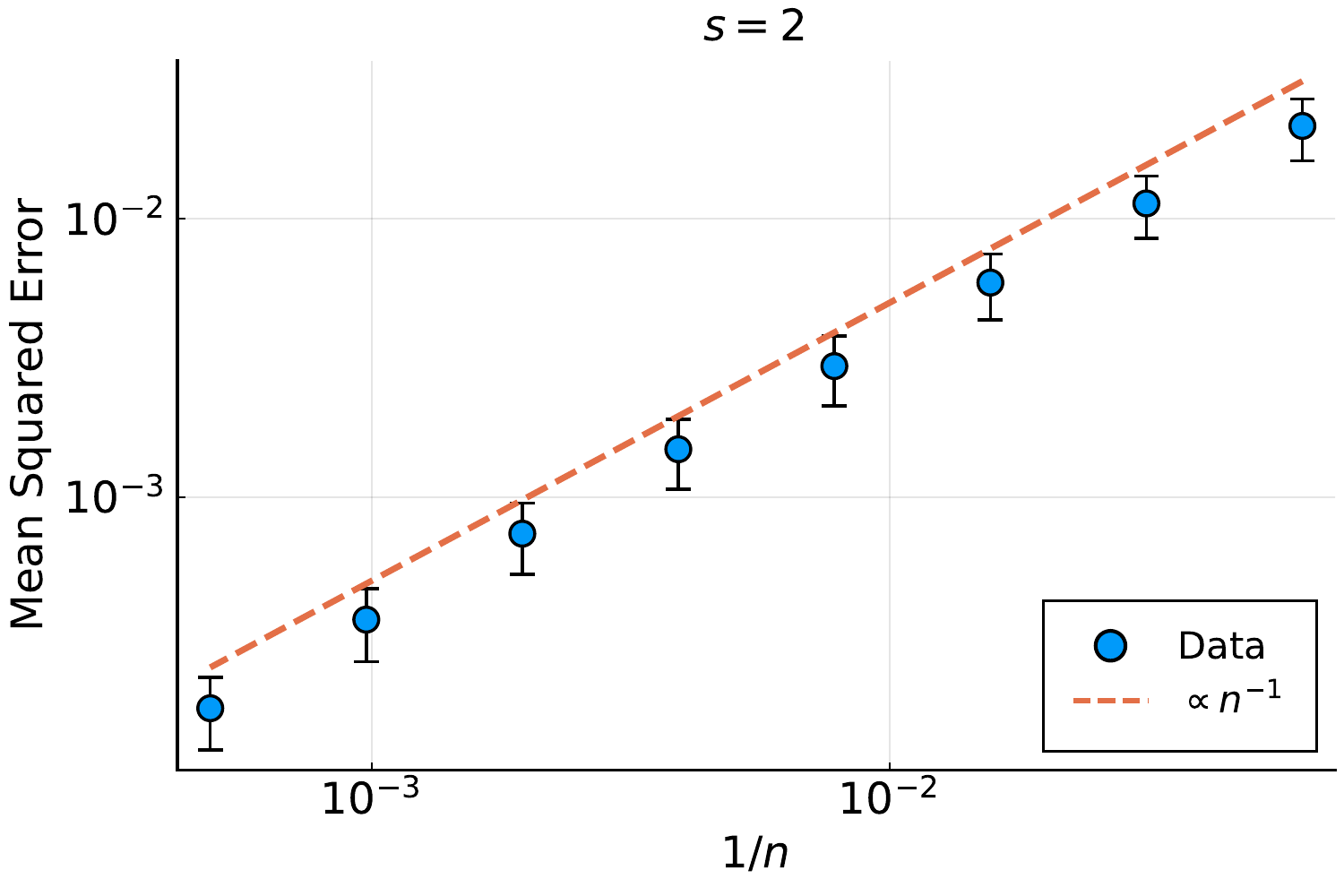}}
        
    \subfigure[]{\includegraphics[width=6.25cm]{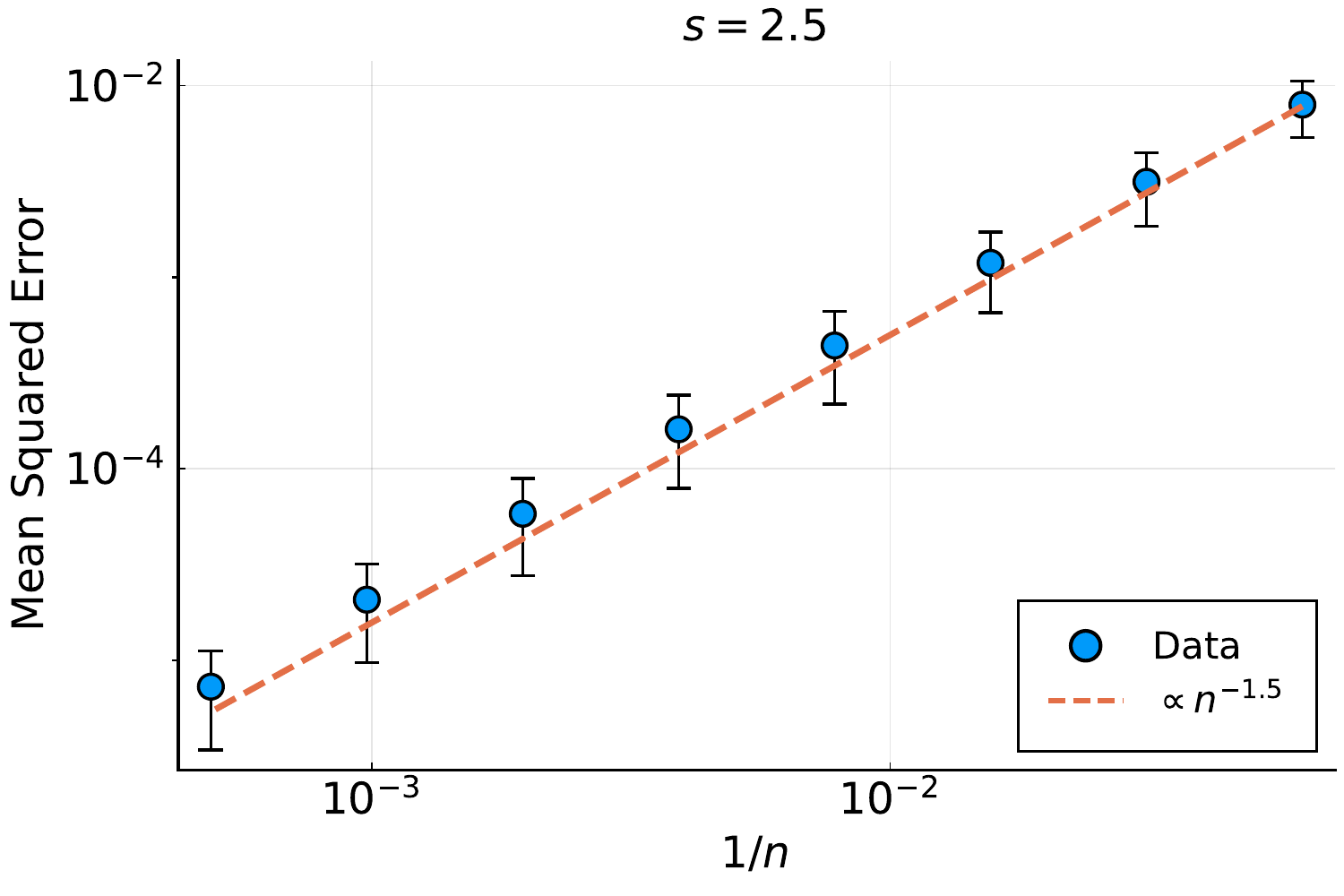}}
    \subfigure[]{\includegraphics[width=6.25cm]{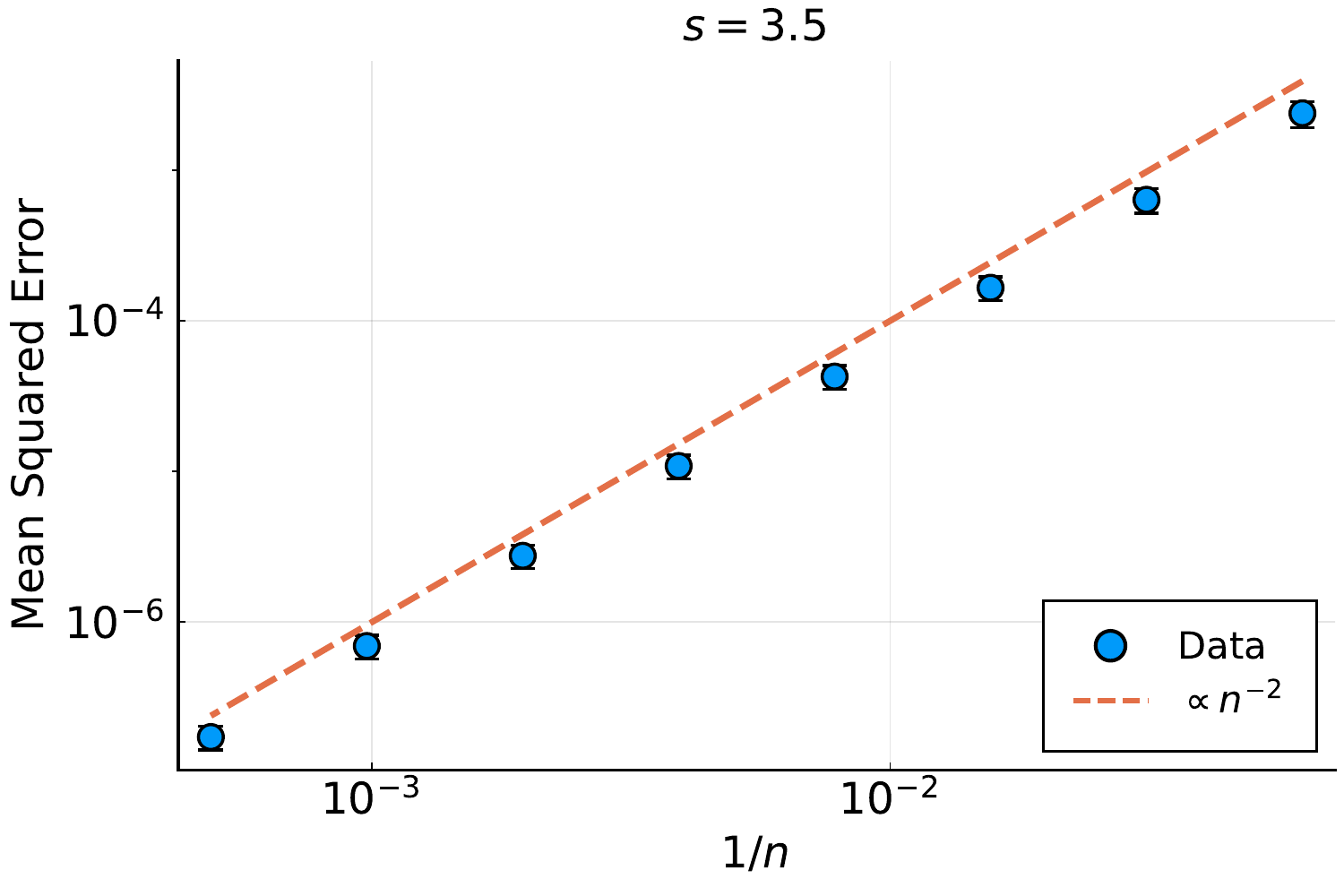}}
                
    \caption{Convergence of the mean square error as a function of $n$ at fixed time step $\Dt=0.001$.  The reference path is generated with $2^{13}$ sample points.  Error bars are one standard deviation from $10^2$ trials.}
    
    \label{fig:nconv1}
\end{figure}

At fixed $n = 1024$, we obtain the results shown in Figure \ref{fig:dtconv1}.  Here, we see the predicted $\propto \Dt$ error across all cases.

\begin{figure}
    \centering
    \subfigure[]{\includegraphics[width=6.25cm]{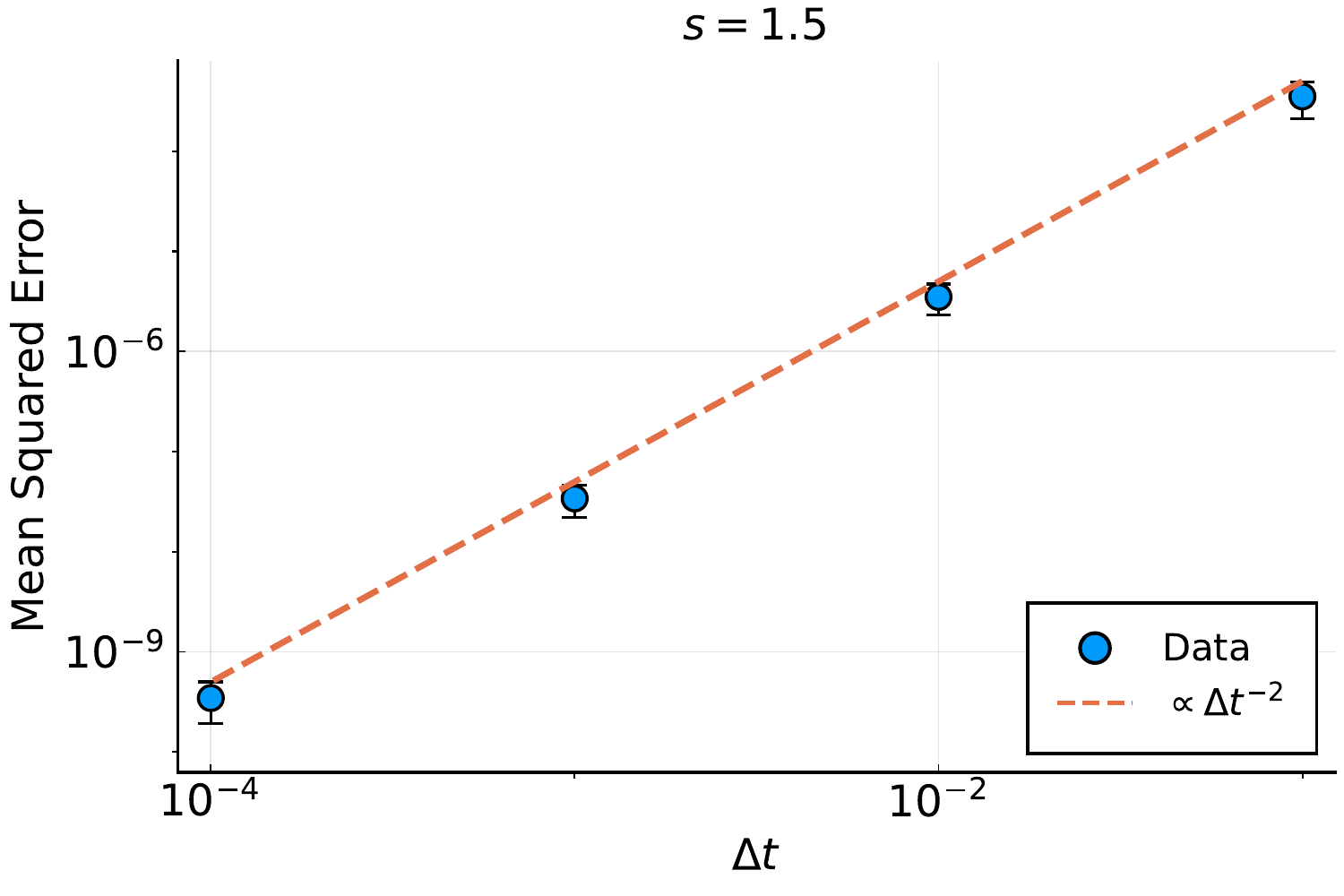}}
    \subfigure[]{\includegraphics[width=6.25cm]{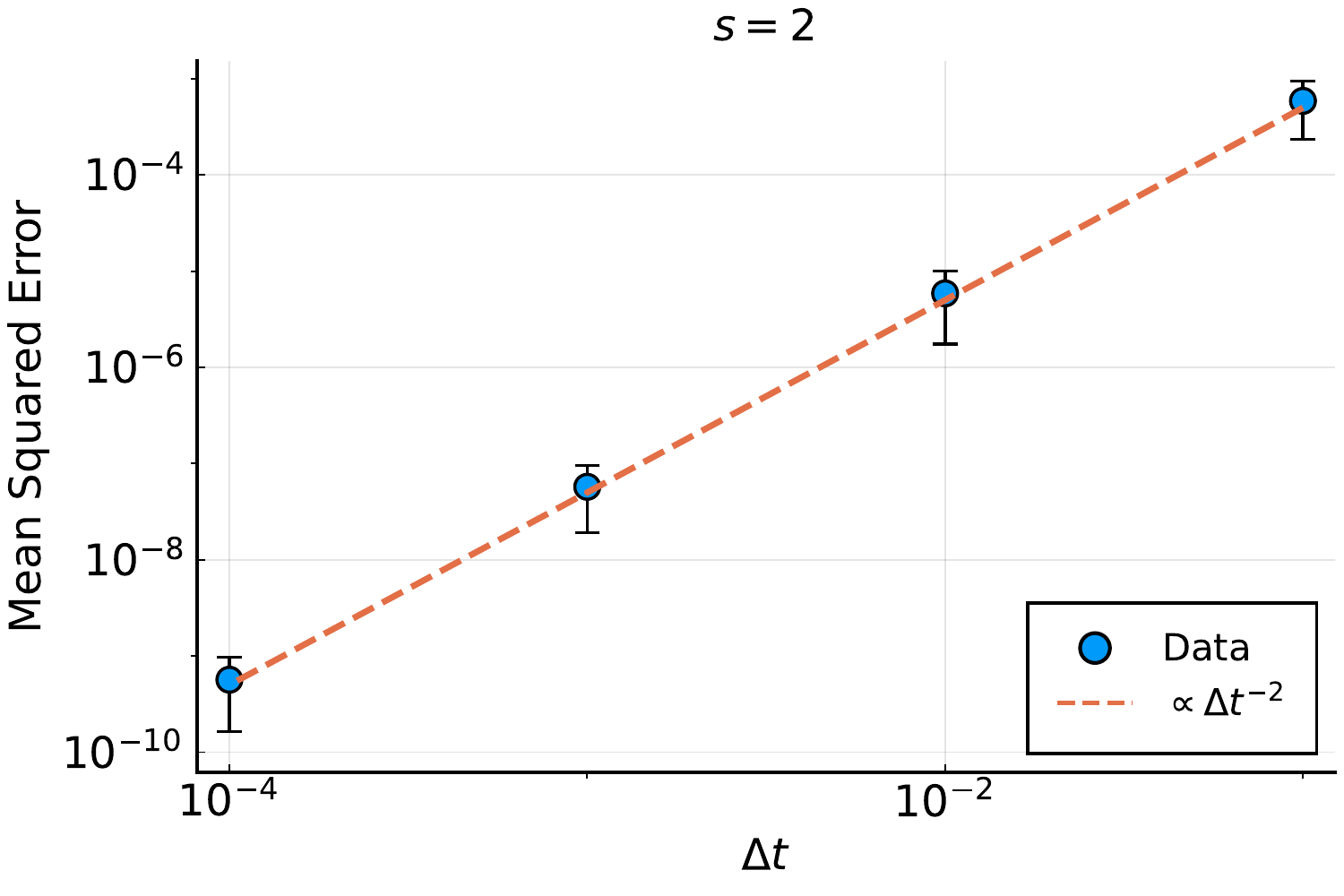}}
        
    \subfigure[]{\includegraphics[width=6.25cm]{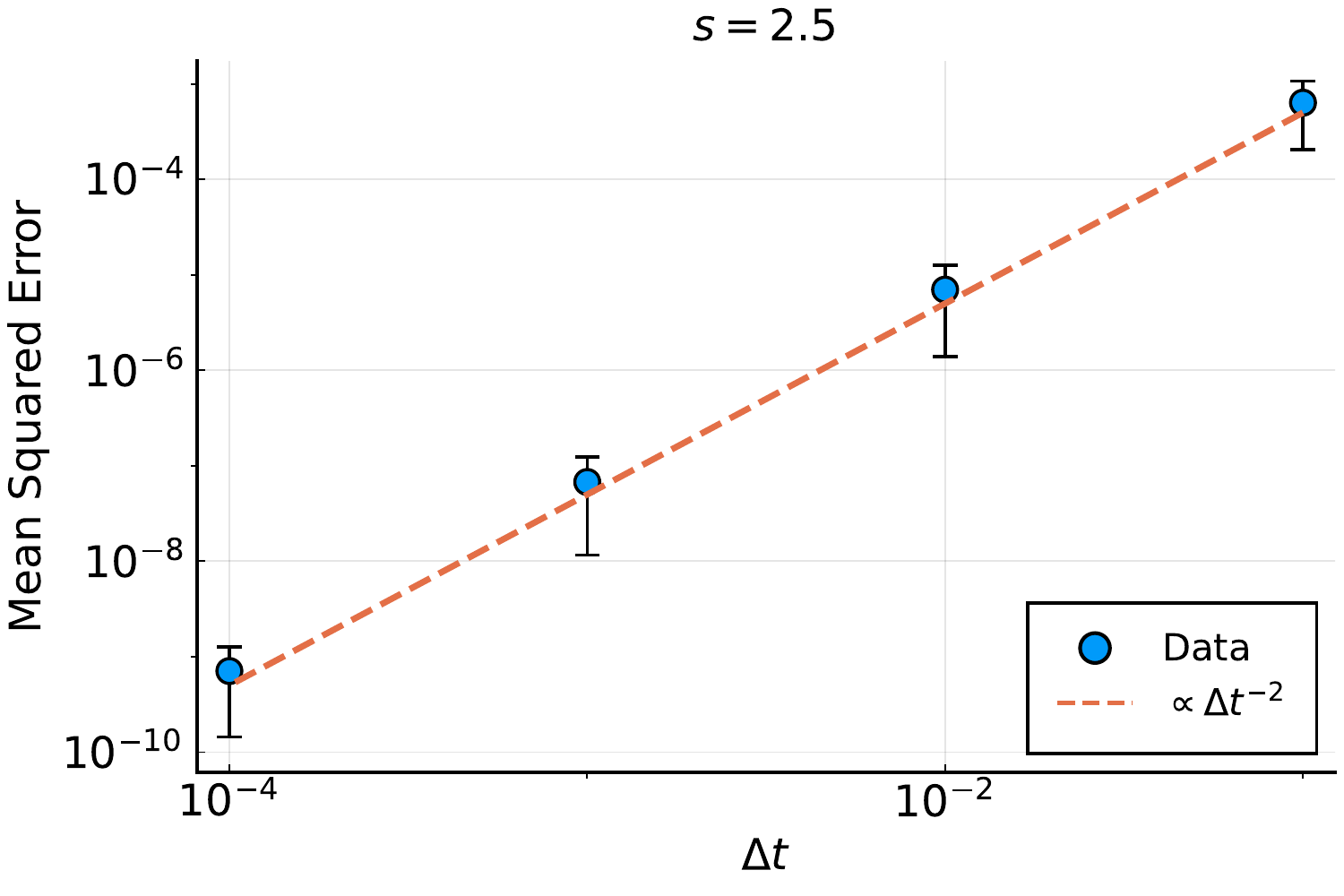}}
    \subfigure[]{\includegraphics[width=6.25cm]{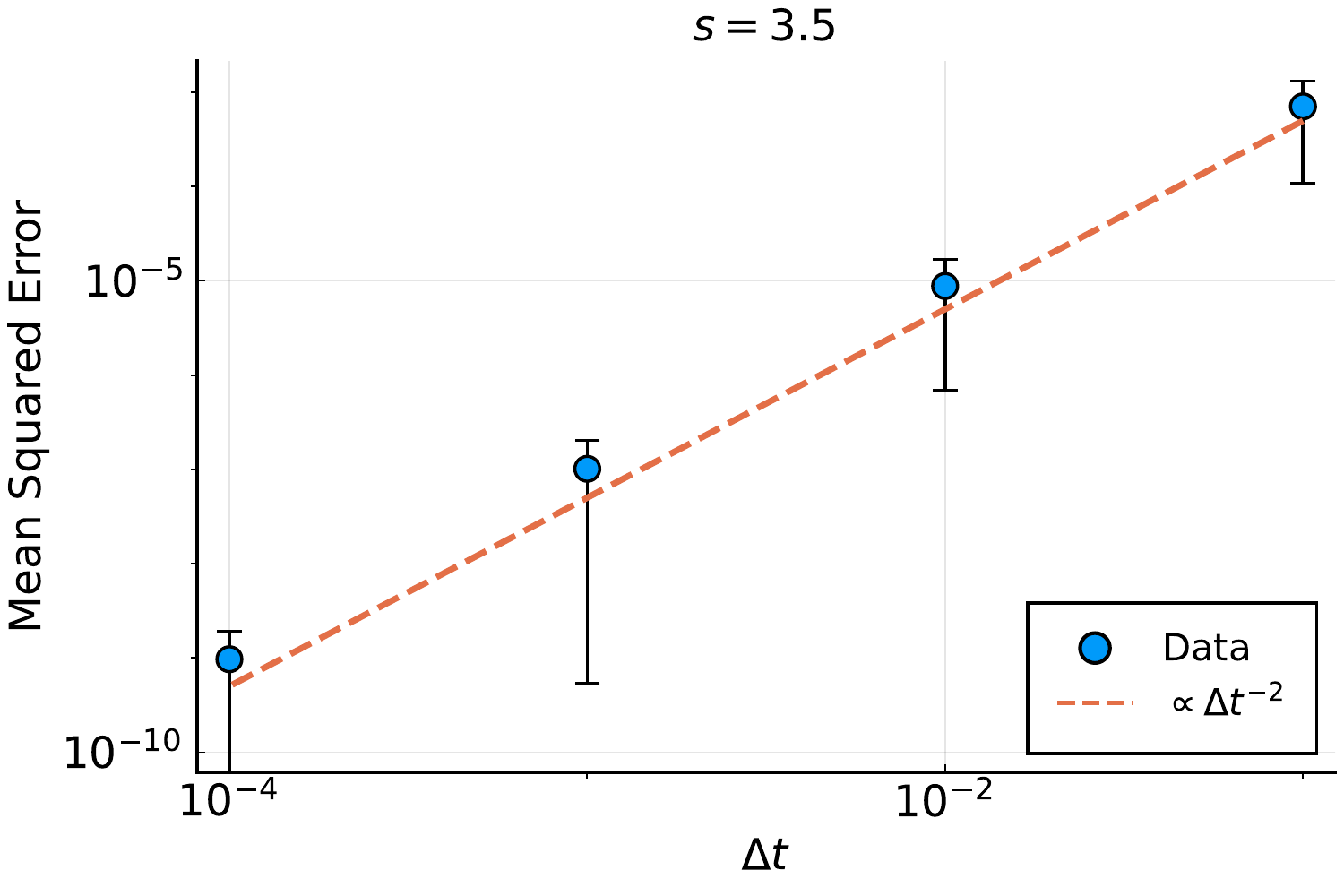}}
                
    \caption{Convergence of the mean square error as a function of $\Dt$ at fixed spatial resolution $n=1024$.  The reference path is generated with $\Dt = 10^{-6}$.  Error bars are one standard deviation from $10^2$ trials.}
    
    \label{fig:dtconv1}
\end{figure}

\subsection{Understanding the spatial error discrepancy}
\label{s:spatial}

Consider the slight simplification of \eqref{e:Kexample},
\begin{subequations}
\label{e:K01}
\begin{align}
    B_r & = \set{(x,y)\in [0,1]^2\mid |x-y|<r},\\
    K(x,y) & = 1_{B_r}(x,y),
\end{align}
\end{subequations}
for some $r \in (0,1)$. This is in function is in $\Lip(1/2, L^2(I^2))$ and, using the preceding estimates,  contributes an error term $\propto n^{-1/2}$. In our proof of Theorem \ref{t:rate}, we treated the error \rev{of $ \vvvert\mathbf{K}[u^n]-\mathbf{K}^n[u^n]\vvvert_{2}$ with the  $L^2(I^d\times I^d)$ error of $\bP_n^{(1)}K$; this appears in \eqref{K-Kn}.}  We could have, instead, bounded it in $L^1_y L^\infty_x$, to obtain
\begin{equation*}
     \trinorm{\mathbf{K}[u^n] - \mathbf{K}^n[u^n]}_2\lesssim \|\|K(x,\cdot) -(\rev{\mathbf{P}_n^{(1)}} K)(x,\cdot)\|_{L^1_y}\|_{L^\infty_x} .
\end{equation*}
This can give us higher order convergence.  Indeed, consider,  $x \in (r,1-r)$, and assume that $n$ is  sufficiently large that 
\[
r < x_{i-1}< x < x_i < 1-r.
\]
Then, by a geometric argument,
\begin{equation*}
\begin{split}
    \int |K(x,y) - (\rev{\mathbf{P}_n^{(1)}}K)(x,y)|dy &= \int_{x_{i-1}-r}^{x_{i} -r}  |K(x,y) - (\rev{\mathbf{P}_n^{(1)}}K)(x,y)|dy \\
    &\quad + \int_{x_{i-1}+r}^{x_{i}+r} |K(x,y) - (\rev{\mathbf{P}_n^{(1)}}K)(x,y)|dy \\
    & = \int_{x_{i-1}-r}^{x_{i} -r}  |1_{|x-y|<r} - \Delta x^{-1}(y-(x_{i-1}-r))|dy \\
    &\quad + \int_{x_{i-1}+r}^{x_{i}+r} |1_{|x-y|<r} - \Delta x^{-1}(x_{i}+r - y))|dy \\
    & \lesssim  \Delta x.
\end{split}
\end{equation*}
Similar arguments hold for when $x< r$ and when $x> 1-r$.  Consequently, 
\begin{equation*}
    \trinorm{\mathbf{K}[u^n] - \mathbf{K}^n[u^n]}_2\lesssim n^{-1}
\end{equation*}
instead of the $n^{-1/2}$ rate we would get from an $L^2(\rev{I^d\times I^d})$ analysis.

\section{Discussion}\label{s:discussion}
In this paper, we examined the well-posedness and analyzed a numerical method for a nonlocal evolution equation describing dynamics of interacting particles on graph forced by noise in the limit as the number of particles goes to infinity.  We found good agreement between our numerical experiments and the predictions, and we were able to explain  the discrepancy between the more general result, Theorem \ref{t:fulldiscrete}, and the experiments.

Several extensions are possible. First, it is straightforward to extend this algorithm and the error analysis to cover models with random initial data. Second, one can combine the Galerkin method  with the Monte Carlo approximation of the nonlocal term to reduce the amount of computation necessary to achieve a given degree accuracy (cf.~\cite{KVMed20}). This approach \rev{is} especially effective for models
with nonsmooth kernels and for higher dimensional spatial domains.  

Another extension would be to further develop the convergence analysis with respect to the interaction kernel, as discussed in Section \ref{s:spatial}.  There, we remarked that if the error were measured in the $L_y^1\otimes L^\infty_x$ norm, we could obtain higher order convergence than in the $L^2_x \otimes L^2_y$ norm.  It would be desirable to determine an ``optimal'' function space in which to study the projection error of the kernel.   Likewise, we found that for trigonometric nonlinearities, we could improve our time stepping error to match that of Milstein's method; this was the content of Theorem \ref{t:fulldiscrete2}.  It would also be desirable to identify the full class of nonlinear interactions, $S$, for which this higher order convergence holds.  A final extension of this work would be to allow for multiplicative, instead of additive, noise.

As a by-product, this work also presents a rigorous continuum limit for a large class of interacting dynamical systems on graphs subject to noise. Existing continuum models for interacting diffusions on graphs rely on Sznitman's nonlinear process framework \cite{Sznitman91}, which requires additional integration of McKean-Vlasov partial differential equation \cite{Lucon2020}. Thus, our model presents a simpler, and more direct, description of the continuum limit of interacting diffusions on graphs \rev{in the spirit
of \cite{Med19}}. At the technical level, we prove convergence of discrete models in stronger topology than that of weakly continuous measure valued process that is normally used in this context. In addition to providing continuum description for many common in applications such as the Kuramoto model  of coupled phase oscillators and discrete models of neural tissue, our method can be used for numerical integration of nonlocal diffusion equations, including nonlinear and fractional diffusion models.  Other applications include population dynamics, swarming, and peridynamics.

\vskip 0.2cm
\noindent
{\bf Acknowledgements.} This work was supported in part by NSF grant DMS-2009233 (to GSM)
and DMS-1818726\rev{, DMS-2111278} (to GS).

\rev{Work reported here was run on hardware supported by Drexel's University Research Computing Facility.}

\providecommand{\bysame}{\leavevmode\hbox to3em{\hrulefill}\thinspace}
\providecommand{\MR}{\relax\ifhmode\unskip\space\fi MR }
\providecommand{\MRhref}[2]{%
  \href{http://www.ams.org/mathscinet-getitem?mr=#1}{#2}
}
\providecommand{\href}[2]{#2}

\appendix

\section{Supplementary Calculations}
\label{s:supplement}

\begin{lemma}
\label{l:EsinY}
Let $Y$ be an $\mathcal{H} = L^2(I^d)$ valued Gaussian random variable with mean zero and trace class covariance operator ${\bf Q}$, with eigenvalues $\lambda_k$ and eigenfunctions $e_k$.  Then $\mathbb{E}[\sin(Y)]=0$
\end{lemma}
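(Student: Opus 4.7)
The plan is to exploit the symmetry of a centered Gaussian under negation. Since $Y$ is $L^2(I^d)$-valued Gaussian with mean zero, the random element $-Y$ has the same distribution as $Y$. The function $\sin$ is odd, so when applied pointwise, $\sin(-Y)(x) = -\sin(Y(x))$, i.e.\ $\sin(-Y) = -\sin(Y)$ as $\mathcal{H}$-valued random variables.

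The first step is to verify that $\E[\sin(Y)]$ is well-defined as a Bochner integral in $\mathcal{H}$. This is immediate: for any realization of $Y$, the pointwise composition $\sin(Y)$ lies in $L^\infty(I^d) \subset L^2(I^d)$ with $\|\sin(Y)\|_{\cH} \le |I^d|^{1/2}$, so $\sin(Y)$ is a uniformly bounded, strongly measurable $\mathcal{H}$-valued random variable and the Bochner integral exists.

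The second step is the symmetry identity itself. Since $-Y \stackrel{d}{=} Y$, we have $\sin(-Y) \stackrel{d}{=} \sin(Y)$, so the Bochner expectations agree:
\begin{equation*}
\E[\sin(Y)] = \E[\sin(-Y)] = \E[-\sin(Y)] = -\E[\sin(Y)],
\end{equation*}
which forces $\E[\sin(Y)] = 0$ in $\mathcal{H}$.

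There is really no obstacle here beyond confirming the interpretation of $\sin(Y)$ and the validity of the Bochner integral; the Karhunen--Lo\`eve expansion $Y = \sum_k \sqrt{\lambda_k} e_k \xi_k$ with i.i.d.\ standard Gaussians $\xi_k$ makes the equality in law $-Y \stackrel{d}{=} Y$ completely transparent, since $(-\xi_k) \stackrel{d}{=} (\xi_k)$ jointly. One could alternatively argue pointwise, using $\E[\sin(aZ)] = 0$ for $Z$ standard normal, but this would require the pointwise values $Y(x)$ to make sense, whereas the symmetry argument works directly with the $\mathcal{H}$-valued object.
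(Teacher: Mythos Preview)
Your proof is correct and takes a genuinely different, and more economical, route than the paper's.

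The paper argues by approximation: it truncates the Karhunen--Lo\`eve expansion to $Y_N$, then mollifies the eigenfunctions so that $Y_{N,\eps}(x)$ is a well-defined scalar mean-zero Gaussian at each point $x$, verifies $\E[\sin(Y_{N,\eps}(x))]=0$ pointwise, and finally passes to the limits $\eps\to 0$, $N\to\infty$ using the $1$-Lipschitz property of $\sin$ to control $\|\E[\sin(Y)] - \E[\sin(Y_{N,\eps})]\|$. Your argument instead uses only the invariance of a centered Gaussian law under negation together with the oddness of the Nemytskii map $f\mapsto \sin(f)$, so that $\E[\sin(Y)] = \E[\sin(-Y)] = -\E[\sin(Y)]$. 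The ingredients you need---that $\sin(Y)$ is a bounded, strongly measurable $\cH$-valued random variable (hence Bochner integrable) and that $-Y\stackrel{d}{=}Y$---are both immediate, the latter either from the KL expansion as you note or directly from the definition of an $\cH$-valued Gaussian via finite-dimensional projections. Your approach is shorter and avoids the mollification step entirely; the paper's approach has the modest advantage of making contact with the pointwise scalar computation $\E[\sin(aZ)]=0$, which would generalize to identities like $\E[\cos(Y(x))]=e^{-\sigma^2(x)/2}$ that do not follow from symmetry alone.
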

\begin{proof}

\begin{enumerate}[label=\arabic*., leftmargin=\parindent]

\item We first write  $Y$ using the  a Karhunen-Lo\`eve representation,
$$
Y = \sum_{k=1}^\infty \sqrt{\lambda_k} \xi_k e_k,
$$
and truncate it to the first $N$ modes,
$$
Y_N = \sum_{k=1}^N\sqrt{\lambda_k} \xi_k e_k.
$$
$Y_N\to Y$ in $L^2(\P;\rev{\cH})$, as
$$
\E[\|Y-Y_N\|^2] = \sum_{k=N+1}^\infty \lambda_k.
$$
As $\Tr {\bf Q} < \infty$, this clearly vanishes.  Suppose we can show that, for all $N$, $\E[\sin(Y_N)]=0$.  Then, for any $N$,
\begin{equation*}
\begin{split}
\|\E[\sin(Y)]\|^2 &= \|\E[\sin(Y)] - \E[\sin(Y_N)]\|^2\\
&\leq \E[\|\sin(Y) - \sin(Y_N)\|^2]\leq \E[\|Y - Y_N\|^2]
\end{split}
\end{equation*}
Since this vanishes as $N\to \infty$, $\E[\sin(Y)] = 0$ with equality in the sense of $L^2$.

\item Next, we verify that for any $N$, $\E[\sin(Y_N)]=0$.  Let
$$
Y_{N,\eps} = \sum_{k=1}^N\sqrt{\lambda_k} \xi_k \varphi_{k,\eps}.
$$
where $\varphi_{k,\eps}$ are mollified eigenfunctions so as to allow for pointwise evaluation.  Since $N$ is finite, we can be assured that $\varphi_{k,\eps}\to e_k$ in $L^2$ as $\eps\to 0$, uniformly in $k=1,\ldots,N$.  We will verify that for any $x$ and any $\eps$, $\E[\sin(Y_{N,\eps}(x))]=0$.  Consequently,
\begin{equation*}
\begin{split}
\|\E[\sin(Y_N)]\|^2 &= \|\E[\sin(Y_N) - \sin(Y_{N,\eps})\|^2\\
&\leq \E[\|\sin(Y_N) - \sin(Y_{N,\eps})\|^2]\\
&\leq \E[\|Y_N - Y_{N,\eps}\|^2]\\
&\leq \sum_{k=1}^N \lambda_k \|e_k - \varphi_{k,\eps}\|^2\leq \lambda_1 \sum_{k=1}^N\|e_k - \varphi_{k,\eps}\|^2.
\end{split}
\end{equation*}
This obviously vanishes as $\eps\to 0$.

\item Finally, for any $x$ and any $\eps$, $Y_{N,\eps}(x)$ is a scalar mean zero Gaussian with variance
$$
\sum_{k=1}^N \lambda_k \varphi_{k,\eps}(x)^2< \infty.
$$
For such a random variable is a straightforward calculation to verify that $\E[\sin(Y_{N,\eps}(x))]=0$.

\end{enumerate}
\end{proof}

The following proposition shows that the bounds in Proposition \ref{p:Milstein} hold for a particular case of \eqref{e:kuramoto}, allowing us to obtain higher order convergence in time when Euler-Maruyama time stepping is used; see Theorem \ref{t:fulldiscrete2}.
\begin{proposition}
\label{p:MilsteinKuramoto}
Let $u$ solve \eqref{e:kuramoto} with $f=0$ and
\begin{equation*}
\mathbf{K}[u(t)] = \int K(x,y)\sin(2\pi(u(x,t)-u(y,t)))dy.
\end{equation*}
Then for any partition $0=t_0<t_1<\ldots< t_{M}=T$, $s \in [t_j, t_{j+1}]$
\begin{equation*}
\mathbf{K}[u(s)]     - \mathbf{K}[u(t_j)] = a_j(s) + \beta_j(s),
\end{equation*}
with $a_j$ and $\beta_j$ that satisfy the conditions:
\begin{gather*}
    \trinorm{a_j(s)}_2 \lesssim(s-t_j),\\
    \trinorm{\beta_j(s)}_2 \lesssim \sqrt{s-t_j}, \quad \E[\beta_j(s)\mid \mathcal{F}_{t_j}] = 0\rev{,}
\end{gather*}
\rev{where the implicit constants are independent of the $t_j$.}
\end{proposition}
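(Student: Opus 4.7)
The plan is to extract the mean-zero fluctuations from $\mathbf{K}[u(s)]-\mathbf{K}[u(t_j)]$ by combining the mild-form decomposition of $u$ with the trigonometric addition formula for $\sin$. Using \eqref{e:mildKM}, write $u(s)-u(t_j) = N_j(s) + D_j(s)$, where $N_j(s):=W(s)-W(t_j)$ is a mean-zero $\mathbf{Q}$-Wiener increment independent of $\mathcal{F}_{t_j}$ and $D_j(s):=\int_{t_j}^s \mathbf{K}[u(r)]\,dr$. Since $|\sin|\le 1$ and $\int|K(x,y)|\,dy \le \sqrt{K_1}$ by Cauchy-Schwarz, one has the deterministic pointwise bound $\|D_j(s)\|_{L^\infty(I^d)} \le \sqrt{K_1}\,(s-t_j)$, while $N_j(s)$ is only $O(\sqrt{s-t_j})$ in mean square.

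Introduce the intermediate noise-only process $\tilde u(s) := u(t_j) + N_j(s)$ and split $\mathbf{K}[u(s)]-\mathbf{K}[u(t_j)] = R + P$ with $R := \mathbf{K}[u(s)]-\mathbf{K}[\tilde u(s)]$ and $P := \mathbf{K}[\tilde u(s)]-\mathbf{K}[u(t_j)]$. Lipschitz continuity of $\sin$ and the pointwise bound on $D_j$ give $|R(x)| \lesssim (s-t_j)$ uniformly in $x$ and $\omega$, so $R$ is absorbed into $a_j$. For $P$, set $\tilde B(x,y) := 2\pi(u(x,t_j)-u(y,t_j))$ and $\theta_N(x,y) := 2\pi(N_j(s,x)-N_j(s,y))$, and apply
\begin{equation*}
\sin(\tilde B + \theta_N) - \sin\tilde B = \sin\tilde B\,(\cos\theta_N - 1) + \cos\tilde B\,\sin\theta_N
\end{equation*}
to decompose $P = P_1 + P_2$. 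Since $\tilde B$ is $\mathcal{F}_{t_j}$-measurable and $\theta_N$ is a mean-zero Gaussian independent of $\mathcal{F}_{t_j}$, symmetry gives $\E[\sin\theta_N\mid\mathcal{F}_{t_j}] = 0$ and hence $\E[P_2\mid\mathcal{F}_{t_j}] = 0$. From the Karhunen-Lo\`eve expansion of $W$ one computes $\E[\theta_N(x,y)^2] = (2\pi)^2(s-t_j)\rho(x,y)$ with $\rho(x,y):=\sum_k\lambda_k(e_k(x)-e_k(y))^2$ and $\iint\rho\,dx\,dy \le 2\Tr\mathbf{Q}$. Using $|\sin\theta_N|,|\cos\theta_N-1| \le |\theta_N|$ together with Cauchy-Schwarz, both $\trinorm{P_1}_2$ and $\trinorm{P_2}_2$ are then $O(\sqrt{s-t_j})$, so $P_2$ and $P_1 - \E[P_1\mid\mathcal{F}_{t_j}]$ are placed in $\beta_j$.

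The central technical step, and the expected main obstacle, is showing $\trinorm{\E[P_1\mid\mathcal{F}_{t_j}]}_2 \lesssim (s-t_j)$. Conditionally on $\mathcal{F}_{t_j}$, $\theta_N(x,y)$ is a mean-zero Gaussian with variance $(2\pi)^2(s-t_j)\rho(x,y)$, so by the Gaussian characteristic function $\E[\cos\theta_N\mid\mathcal{F}_{t_j}] = e^{-2\pi^2(s-t_j)\rho(x,y)}$, and $|e^{-z}-1|\le z$ for $z\ge 0$ yields
\begin{equation*}
\bigl|\E[P_1\mid\mathcal{F}_{t_j}](x)\bigr| \le 2\pi^2(s-t_j)\int |K(x,y)|\rho(x,y)\,dy.
\end{equation*}
Cauchy-Schwarz in $y$ together with $\int|K(x,y)|^2\,dy \le K_1$ gives $\trinorm{\E[P_1\mid\mathcal{F}_{t_j}]}_2 \le C(s-t_j)\bigl(K_1\iint\rho(x,y)^2\,dx\,dy\bigr)^{1/2}$, which delivers the required scaling provided $\iint\rho^2 < \infty$ -- a property that holds automatically when the eigenfunctions $e_k$ are uniformly bounded (as in Example~\ref{ex:Rate} and in typical applications), since then $\rho$ is pointwise dominated by a multiple of $\Tr\mathbf{Q}$. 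Finally, setting $a_j := R + \E[P_1\mid\mathcal{F}_{t_j}]$ and $\beta_j := (P_1 - \E[P_1\mid\mathcal{F}_{t_j}]) + P_2$ yields the advertised decomposition, with constants depending on $K_1$, $\mathbf{Q}$, and $T$ but independent of the partition.
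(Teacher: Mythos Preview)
Your argument is correct and takes a genuinely different route from the paper's. Both proofs rest on the same two ingredients --- the sine addition formula and the drift/noise splitting $u(s)-u(t_j)=D_j+N_j$ --- but organize them differently. The paper performs a more granular trigonometric expansion, producing five $a_j$-terms and a $\beta_j$ built from $\sin(2\pi\Delta_jW(x,s))-\sin(2\pi\Delta_jW(y,s))$; the $\cos-1$ pieces are placed wholesale into $a_j$ and controlled by the inequality $|\cos z-1|\le\tfrac12 z^2$ together with fourth-moment bounds $\trinorm{u(s)-u(t_j)}_4^2,\ \trinorm{W(s)-W(t_j)}_4^2\lesssim s-t_j$. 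Your approach is more economical: the intermediate $\tilde u=u(t_j)+N_j$ lets you dispose of the drift contribution $R$ by a direct $L^\infty$ estimate, leaving only the pure-noise term $P=P_1+P_2$; you then keep $\sin\theta_N$ intact in $\beta_j$ and, instead of fourth moments, compute $\E[\cos\theta_N-1\mid\mathcal F_{t_j}]$ explicitly via the Gaussian characteristic function. The trade-off is that your bound on $\E[P_1\mid\mathcal F_{t_j}]$ requires $\iint\rho^2<\infty$, which you correctly flag as an extra hypothesis (bounded eigenfunctions suffice). It is worth noting that the paper's fourth-moment route has essentially the same hidden cost: the step $\E[\iint|K|^2|\Delta_jW(y)|^4]\lesssim\trinorm{\Delta_jW}_4^4$ actually needs the $L^4(I^d)$ norm of $\Delta_jW$, not the $L^2$-based $\trinorm{\cdot}_4$, and controlling $\E[\|\Delta_jW\|_{L^4}^4]$ again reduces to $\int(\sum_k\lambda_ke_k^2)^2<\infty$. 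So your explicit caveat is, if anything, more honest than the paper's treatment; neither argument establishes the proposition under the bare trace-class assumption alone.
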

The precise form of $a_j$ and $\beta_j$ is not essential, but it can be found below in \eqref{e:aj} and \eqref{e:betaj}.  

\begin{proof}
\begin{enumerate}[label=\arabic*., leftmargin=\parindent]
 We begin by writing
\begin{equation}
\label{e:Ksplit1}
\begin{split}
    &\mathbf{K}[u(s)] - \mathbf{K}[u(t_j)]\\
    &= \int K(x,y)\sin(2\pi(u(x,s)-u(y,s)))dy\\
    &\quad - \int K(x,y)\sin(2\pi(u(x,t_j)-u(y,t_j)))dy.
\end{split}
\end{equation}
It will be sufficient to analyze one of the integral terms.
\item Define the following terms to simplify the expressions
\begin{align}
\Delta_j u(x,s) &= u(x,s) - u(x,t_j)\\
\delta_{xy} u(s) & = u(x,s) - u(y,s)\\
\Delta_j \delta_{xy} u(s) &= (u(x,s) - u(y,s)) - (u(x,t_j) - u(y,t_j))
\end{align}
We will occasionally suppress the $x$ or $y$ dependence when there is no ambiguity.  Then one of the integrand terms in \eqref{e:Ksplit1} is
\begin{equation*}
    \begin{split}
        \sin(2\pi\delta_{xy}u(s)) - \sin(2\pi \delta_{xy}u(t_j))
        & = \sin(2\pi \delta_{xy}u(t_j))[\cos(2\pi \rev{\Delta_j\delta_{xy}u(s)})-1]\\
        &\quad + \cos(2\pi\delta_{xy}u(t_j))\underbrace{\sin(2\pi\Delta_j \delta_{xy}\rev{u(s)} ))}_{\equiv I}
    \end{split}
\end{equation*}

\item Next, \rev{since}
\begin{equation*}
\begin{split}
    \Delta_j u(s) &= \underbrace{\int_{t_j}^{s}\mathbf{K}[u(\tau)]d\tau}_{\equiv \Delta_j F(s) } +  (\underbrace{W(s) -  W(t_j)}_{\equiv \Delta_j W(s)})
\end{split}
\end{equation*}
with analogous expressions for $\Delta_j\delta_{xy}F(s)$ and $\Delta_j\delta_{xy}W(s)$, we write
\rev{\begin{equation*}
\begin{split}
I&= \sin(2\pi\Delta_j\delta_{xy} F(s) )\cos(2\pi\Delta_j\delta_{xy}W(s) )+\cos(2\pi\Delta_j\delta_{xy} F(s))\sin(2\pi\Delta_j \delta_{xy}W(s) )\\
    &=\sin(2\pi\Delta_j\delta_{xy} F(s) )\cos(2\pi\Delta_j\delta_{xy}W(s) ) + [\cos(2\pi\Delta_j\delta_{xy}F(s) )-1]\sin(2\pi\Delta_j\delta_{xy} W(s))\\
    &\quad + \underbrace{\sin(2\pi\Delta_j \delta_{xy}W(s) )}_{\equiv II}
\end{split}
\end{equation*}}
\item Finally, we expand the last term, to obtain
\begin{equation*}
\begin{split}
II& = \sin(2\pi\Delta_j W(x,s))\cos(2\pi\Delta_j W(y,s))\\
&\quad- \cos(2\pi\Delta_j W(x,s))\sin(2\pi\Delta_j W(y,s))\\
&=\sin(2\pi\Delta_j W(x,s))\\
&\quad + \sin(2\pi\Delta_j W(x,s))[\cos(2\pi\Delta_j W(y,s))-1] \\
&\quad  -\sin(2\pi\Delta_j W(y,s)) \\
&\quad - [\cos(2\pi\Delta_j W(x,s))-1]\sin(2\pi\Delta_j W(y,s))
\end{split}
\end{equation*}
\item The original nonlinear interaction term in \eqref{e:Ksplit1} can now be expressed as
\begin{equation*}
    \begin{split}
        &\sin(2\pi\delta_{xy}u(s)) - \sin(2\pi\delta_{xy}u(t_j))\\
        &=\theta_j^{(1)}(x,y) [\cos(2\pi\Delta_j \delta_{xy}u(s))-1]+ \theta_j^{(2)}(x,y,s)\sin(2\pi\Delta_j \delta_{xy}F(s) )\\
        &\quad + \theta_j^{(3)}(x,y,s)  [\cos(2\pi(\Delta_j\delta_{xy}F(s))-1]\\
        &\quad +\theta_j^{(4)}(x,y,s)[\cos(2\pi\Delta_j W(y,s))-1]\\
        &\quad - \theta_j^{(5)}(x,y,s) [\cos(2\pi\Delta_j W(x,s))-1]\\
        &\quad +\eta_j(x,y) (\sin(2\pi\Delta_j W(x,s))-\sin(2\pi\Delta_j W(y,s)))
    \end{split}
\end{equation*}
where
\begin{subequations}
    \begin{align*}
        \theta_j^{(1)} & =\sin(2\pi\delta_{xy}u(t_j))\\
        \theta_j^{(2)} & =\cos(2\pi\delta_{xy}u(t_j)) \cos(2\pi\Delta_j \delta_{xy}W(s))\\
        \theta_j^{(3)} & =\cos(2\pi\delta_{xy}u(t_j)) \sin(2\pi\Delta_j \delta_{xy}W(s))\\
        \theta_j^{(4)} & =\cos(2\pi \delta_{xy}u(t_j)) \sin(2\pi\Delta_j W(x,s)) \\
        \theta_j^{(5)} & =\cos(2\pi\delta_{xy}u(t_j))\sin(2\pi\Delta_j W(y,s)) \\
        \eta_j & = \cos(2\pi\delta_{xy}u(t_j))
    \end{align*}
\end{subequations}
\item The terms that we need to analyze to reach our result, $a_j$ and $\beta_j$, are now given explicitly.
\begin{equation}
\label{e:aj}
\begin{split}
    a_j(s) &= \int K(\cdot,y) \left\{\theta_j^{(1)}(\cdot,y) [\cos(2\pi\Delta_j\delta_{xy}u(s))-1]\right.\\
    &+ \theta_j^{(2)}(\cdot,y, s) \sin(2\pi\Delta_j \delta_{xy}F(s)) \\
    &+ \theta_j^{(3)}(\cdot,y, s)  [\cos(2\pi\Delta_j \delta_{xy}F(s)  )-1]\\
    &+\theta_j^{(4)}(\cdot,y, s) [\cos(2\pi\Delta_j W(\cdot,s))-1]\\
    &\left. \rev{-}\theta_j^{(5)}(\cdot,y, s) [\cos(2\pi\Delta_j W(y,s))-1]\right\}dy
\end{split}
\end{equation}
while
\begin{equation}
\label{e:betaj}
    \beta_j(s)  = \int K(\cdot,y) \eta_j(\cdot,y) \{\sin(2\pi\Delta_j W(x,s))-\sin(2\pi\Delta_j W(y,s))\}dy
\end{equation}

\item We show that $a_j$ has the desired property.   First, 
\begin{equation*}
    \vvvert a_j(s)\vvvert_2 \leq \sum_{k=1}^5 A_j^{(k)}(s)
\end{equation*}
where
\rev{\begin{align*}
    A_j^{(1)}(s)&=\trinorm{\int K(\cdot,y)\theta_j^{(1)}(\cdot,y) [\cos(2\pi\Delta_j\delta_{xy}u(s))-1] dy }_2\\
    A_j^{(2)}(s)&={\trinorm{\int K(\cdot,y)\theta_j^{(2)}(\cdot,y, s) \sin(2\pi\Delta_j \delta_{xy}F(s)) dy }_2}\\
    A_j^{(3)}(s)&=\trinorm{\int K(\cdot,y)\theta_j^{(3)}(\cdot,y, s)  [\cos(2\pi\Delta_j \delta_{xy}F(s)  )-1] dy }_2\\
    A_j^{(4)}(s)&=\trinorm{\int K(\cdot,y)\theta_j^{(4)}(\cdot,y, s) [\cos(2\pi\Delta_j W(\cdot,s))-1] dy }_2\\
    A_j^{(5)}(s)&=\trinorm{\int K(\cdot,y)\theta_j^{(5)}(\cdot,y, s) [\cos(2\pi\Delta_j W(y,s))-1] dy }_2
\end{align*}}
\item We now show for each $k$,  $\vvvert A_j^{(k)}(s)\vvvert_2 \lesssim (s-t_j)$, with \rev{ a constant that is independent of the $t_j$}.  This relies on the elementary inequalities:
\begin{align*}
    |\sin(x)|&\leq |x|\\
    |\cos(x)-1|&\leq |x|\\
    |\cos(x)-1|& \leq \tfrac{1}{2}|x|^2
\end{align*}
First,
\begin{equation*}
\begin{split}
    (A_j^{(1)}(s))^2& = \E\bracket{\int\abs{\int K(x,y) \theta_j^{(1)}(x,y)[\cos(2\pi\Delta_j \delta_{xy}u(s)) - 1]dy}^2dx }\\
    &\leq \E\bracket{\iint |K(x,y)|^2 |\theta^{(1)}_j(x,y)|^2[\cos(2\pi\Delta_j \delta_{xy}u(s))-1]^2dydx}\\
    &\lesssim \E\bracket{\iint |K(x,y)|^2 (|\Delta_ju(x,s)|^2 + |\Delta_ju(y,s)|^2)^2dxdy}\\
    &\lesssim (\|\|K(x,\cdot)\|_{L^2}\|_{L^\infty}^2+\|\|K(\cdot,y)\|_{L^2}\|_{L^\infty}^2)\E[\|\Delta u_j(s)\|^4]
\end{split}
\end{equation*}
Consequently, by Corollary \ref{c:cont},
\begin{equation*}
    A_j^{(1)}\lesssim \trinorm{u(s) - u(t_j)}_{4}^2\lesssim s - t_j
\end{equation*}
Similarly,
\begin{equation*}
\begin{split}
(A_j^{(2)}(s))^2  &= \E\bracket{\int\abs{\int K(x,y) \theta_j^{(2)}(x,y,s) \sin(2\pi\Delta_j \delta_{xy}F(s) )dy}^2dx}\\
&\leq \E\bracket{\iint |K(x,y)|^2 |\sin(2\pi\Delta_j \delta_{xy}F(s) )|^2 dydx}\\
&\lesssim \E\bracket{(\|\|K(x,\cdot)\|_{L^2}\|_{L^\infty}^2+\|\|K(\cdot,y)\|_{L^2}\|_{L^\infty}^2)|\Delta_j F(x,s)|^2 dx}\\
&\lesssim \E[\|\Delta_j F(s)\|^2]
\end{split}
\end{equation*}
Since the \rev{trigonometric interaction term is bounded}
\begin{equation*}
 \E[\|\Delta_j F(s)\|^2] = \E\bracket{\norm{\int_{t_j}^s \mathbf{K}[u(\tau)] \tau}^2_{L^2}}\lesssim (s-t_j)^2
\end{equation*}
and we conclude $A_j^{(2)}(s)\lesssim s-t_j$. The term $A_j^{(3)}$ is established \rev{in} the same way as $A_j^{(2)}$, but using the estimate $|\cos(x)-1|\leq |x|$.
For $A_j^{(4)}$,
\rev{\begin{equation*}
\begin{split}
(A_j^{(4)})^2 & = \E\bracket{\int \abs{\int K(x,y)\theta_j^{(4)}(x,y,s) [\cos(2\pi \Delta_j W(y,s))-1] dy}^2dx}\\
&\lesssim \E\bracket{\iint |K(x,y)|^2 |\Delta_j W(y,s)|^4 dydx}\\
&\lesssim \E[\| W(s) - W(t_j)\|^4]
\end{split}
\end{equation*}}
\rev{Using the  properties of $W$,}
\begin{equation*}
    A_j^{(4)}\lesssim \trinorm{W(s) - W(t_j\rev{)}}_4^2 \lesssim s -t_j
\end{equation*}
$A_j^{(5)}$ is proved in the same way, and we have that $\vvvert a_j(s)\vvvert_2 \lesssim s- t_j$.

\item \rev{Conditioning, we examine}  the $\beta_j$ term:
\begin{equation*}
    \E[\beta_j\mid \mathcal{F}_{t_j}] = \int K(\cdot, y)\E[\eta_j(\cdot,y) \{\sin(2\pi\Delta_j W(\cdot,s))-\sin(2\pi\Delta_j W(y,s))\}\mid \mathcal{F}_{t_j}]dy.
\end{equation*}
Recall, $\eta_j = \cos(2\pi\delta_{xy}u(t_j))$, so it is $\mathcal{F}_{t_j}$ measurable, \rev{and}:
\begin{equation*}
\begin{split}
    &\E[\eta_j(x,y) \{\sin(2\pi\Delta_j W(x,s))-\sin(2\pi\Delta_j W(y,s))\}\mid \mathcal{F}_{t_j}]\\
    &= \eta_j(x,y)\E[ \sin(2\pi\Delta_j W(x,s))\mid \mathcal{F}_{t_j}]\\
    &\quad - \eta_j(x,y)\E[\sin(2\pi\Delta_j W(y,s))\mid \mathcal{F}_{t_j}]\\
    & = 0 - 0, \quad \text{a.s.}
\end{split}
\end{equation*}
by Lemma \ref{l:EsinY}.

Finally,
\rev{\begin{equation*}
\begin{split}
    \vvvert \beta_j(s) \vvvert_2^2 &\leq  \E\bracket{\iint |K(x,y)|^2 |\eta_i(x,y)|^2 |\sin(2\pi  \Delta_i W(\cdot,s)|^2dxdy} ds\\
    &\lesssim  \E[\|\Delta_j W(s)\|^2 ]ds \lesssim s - t_j
\end{split}
\end{equation*}}
where we have used \rev{the properties of $W(t)$ and that $\eta_j$ is bounded by one}.
\end{enumerate}
\end{proof}

\end{document}